\pdfoutput=1

\documentclass[10pt,a4paper]{article}

\usepackage[utf8]{inputenc}
\usepackage[british]{babel}

\usepackage[left=2cm,right=2cm,top=2cm,bottom=2cm]{geometry}

\usepackage{cancel}
\usepackage{amstext}
\usepackage{amssymb}
\usepackage{amsmath}
\usepackage{amsfonts}
\usepackage{mathrsfs}
\usepackage{bm}
\usepackage{nicefrac, xfrac}
\usepackage{upgreek}
\usepackage{booktabs}
\usepackage{array}
\usepackage{xfrac}

\usepackage{xcolor}
\usepackage{color}

\usepackage{graphicx}
\usepackage{pgf,tikz,pgfplots}
\pgfplotsset{compat=1.15}
\usetikzlibrary{arrows, arrows.meta, calc, math, shapes, positioning, fit, backgrounds}
\definecolor{sweep1}{HTML}{4899C3}
\definecolor{sweep2}{HTML}{3391C6}
\definecolor{sweep3}{HTML}{247BB8}
\definecolor{sweep4}{HTML}{1A66A3}
\definecolor{sweep5}{HTML}{125089}
\definecolor{sweep6}{HTML}{0C3B6C}
\definecolor{sweep7}{HTML}{062A52}

\definecolor{relC}{HTML}{1A66A3}
\definecolor{effC}{HTML}{C25E1B}

\pgfplotsset{
    sweep axis/.style={
        width=\linewidth, height=5.6cm,
        ymin=0, ymax=1,
        tick align=outside, tick pos=left,
        axis line style={gray!55, line width=0.4pt},
        tick style={gray!55},
        grid=major,
        grid style={gray!22, line width=0.3pt},
        tick label style={font=\footnotesize},
        label style={font=\footnotesize},
        ylabel near ticks, xlabel near ticks,
    },
    profile axis/.style={
        sweep axis,
        xlabel={$\tau$, performance ratio in iterations},
        ylabel={$\pi(\tau)$},
        every axis plot/.append style={const plot, no marks, line width=0.9pt},
        legend cell align=left,
        legend columns=3,
        legend style={draw=gray!45, fill=white, font=\footnotesize,
                      column sep=6pt, inner sep=3pt,
                      at={(0.5,-0.33)}, anchor=north},
        reverse legend,
    },
    sweep cycle 7/.style={cycle list={
        {sweep1, dash pattern=on 0.6pt off 1.6pt},
        {sweep2, dash pattern=on 1.2pt off 1.6pt},
        {sweep3, dash pattern=on 2pt off 1.6pt},
        {sweep4, dash pattern=on 3pt off 1.6pt},
        {sweep5, dash pattern=on 4.5pt off 1.6pt},
        {sweep6, dash pattern=on 7pt off 1.6pt},
        {sweep7, solid}}},
    sweep cycle 6/.style={cycle list={
        {sweep1, dash pattern=on 0.6pt off 1.6pt},
        {sweep2, dash pattern=on 1.2pt off 1.6pt},
        {sweep3, dash pattern=on 2pt off 1.6pt},
        {sweep5, dash pattern=on 3.5pt off 1.6pt},
        {sweep6, dash pattern=on 6pt off 1.6pt},
        {sweep7, solid}}},
    rate axis/.style={
        sweep axis,
        ylabel={fraction of $196$ problems},
        every axis plot/.append style={line width=0.9pt, mark size=2pt},
        legend cell align=left,
        legend columns=1,
        legend style={draw=gray!45, fill=white, font=\footnotesize,
                      row sep=0.5pt, inner sep=3pt,
                      at={(0.5,-0.33)}, anchor=north},
    },
    rel line/.style={relC, mark=*,      mark options={fill=relC, draw=white, line width=0.3pt}},
    eff line/.style={effC, mark=square*, dashed,
                     mark options={solid, fill=effC, draw=white, line width=0.3pt}},
    ref line/.style={gray!65, line width=0.6pt, dash pattern=on 2pt off 2pt},
}

\usepgfplotslibrary{statistics}

\definecolor{anchorDelta}{HTML}{0072B2}   
\definecolor{anchorStep}{HTML}{D55E00}    
\definecolor{anchorCrit}{HTML}{009E73}    
\definecolor{anchorRetro}{HTML}{8C4799}   

\pgfplotsset{
    comparison cycle/.style={cycle list={
        {anchorDelta, solid},
        {anchorStep,  solid},
        {anchorCrit,  solid},
        {anchorCrit,  dash pattern=on 4pt off 2pt},
        {anchorCrit,  dash pattern=on 4pt off 2pt on 1pt off 2pt},
        {anchorRetro, solid},
        {anchorStep,  dash pattern=on 4pt off 2pt},
        {anchorCrit,  dash pattern=on 1pt off 1.6pt}}},
    comparison legend/.style={
        legend cell align=left,
        legend columns=2,
        legend style={draw=gray!45, fill=white, font=\footnotesize,
                      column sep=6pt, inner sep=3pt,
                      at={(0.5,-0.33)}, anchor=north}},
    scatter axis/.style={
        width=\linewidth, height=5.6cm,
        tick align=outside, tick pos=left,
        axis line style={gray!55, line width=0.4pt},
        tick style={gray!55},
        grid=major, grid style={gray!22, line width=0.3pt},
        tick label style={font=\footnotesize},
        label style={font=\footnotesize},
        ylabel near ticks, xlabel near ticks,
        every axis plot/.append style={only marks, mark size=2.4pt},
        legend cell align=left,
        legend columns=2,
        legend style={draw=gray!45, fill=white, font=\footnotesize,
                      column sep=6pt, inner sep=3pt,
                      at={(0.5,-0.33)}, anchor=north}},
    anchor mark/.style n args={2}{color=#1, mark=#2,
        mark options={fill=#1, draw=white, line width=0.4pt}},
}

\definecolor{boxInk}{HTML}{1A66A3}

\pgfplotsset{
    box axis/.style={
        width=\linewidth, height=6.0cm,
        boxplot/draw direction=y,
        tick align=outside, tick pos=left,
        axis line style={gray!55, line width=0.4pt},
        tick style={gray!55},
        ymajorgrids=true,
        grid style={gray!22, line width=0.3pt},
        xtick={1,2,3,4,5,6,7,8},
        x tick label style={rotate=45, anchor=north east, font=\scriptsize,
                            inner sep=1pt},
        tick label style={font=\footnotesize},
        label style={font=\footnotesize},
        ylabel near ticks,
        every axis plot/.append style={
            boxInk, line width=0.6pt, solid,
            boxplot/every median/.style={boxInk, line width=1.1pt},
            boxplot/every box/.style={draw=boxInk, fill=boxInk!12},
            boxplot/every whisker/.style={boxInk},
            mark=o, mark size=1.4pt, mark options={boxInk, line width=0.4pt}}},
}

\tikzset{
    ref rule/.style={gray!70, line width=0.6pt, dash pattern=on 2pt off 2pt},
    pred rule/.style={anchorStep, line width=0.8pt, dash pattern=on 4pt off 2pt},
}

\usepackage{wrapfig}

\usepackage[colorlinks=true]{hyperref}

\usepackage{ifpdf}

\usepackage[babel=true]{csquotes}
\MakeOuterQuote{"}
\usepackage{textcomp}

\usepackage{graphicx}
\usepackage{longtable}
\usepackage{caption}
\usepackage{subcaption}

\usepackage{svg}

\usepackage{url}
\usepackage{float}
\usepackage{multicol}

\usepackage[framed, amsthm, amsmath, thmmarks]{ntheorem}

\definecolor{mygray}{gray}{0.95}

\newcommand{\theoremcolorbox}[2]{\def\theoremframecommand{\fcolorbox{#1}{#2}}}
\theoremcolorbox{black}{mygray}
\newtheorem{theorem}{Theorem}[section]

\newtheorem{corollary}[theorem]{Corollary}
\newtheorem{lemma}[theorem]{Lemma}
\newtheorem{proposition}[theorem]{Prop.}

\newcommand{\claimno}[1]{\refstepcounter{claimP}\theclaimP\label{#1}}

\theoremstyle{remark}
\newtheorem{remark}{Remark}

\makeatletter
\NewDocumentEnvironment{assum}{m O{}}{%
  \par\addvspace{\topsep}%
  \noindent
  \def\@currentlabel{#1}%
  {\normalfont\bfseries Assumption #1%
    \if\relax\detokenize{#2}\relax\else
      \ \normalfont(#2)%
    \fi
    \normalfont\bfseries.}%
  \nobreak\hspace{.5em}\ignorespaces
}{%
  \par\addvspace{\topsep}%
}
\NewDocumentEnvironment{cond}{m O{}}{%
  \par\addvspace{\topsep}%
  \noindent
  \def\@currentlabel{#1}%
  {\normalfont\bfseries Condition #1%
    \if\relax\detokenize{#2}\relax\else
      \ \normalfont(#2)%
    \fi
    \normalfont\bfseries.}%
  \nobreak\hspace{.5em}\ignorespaces
}{%
  \par\addvspace{\topsep}%
}
\makeatother

\makeatletter
\NewDocumentEnvironment{update}{m O{}}{%
  \par\addvspace{\topsep}%
  \noindent
  \def\@currentlabel{#1}%
  {\normalfont\bfseries Update #1%
    \if\relax\detokenize{#2}\relax\else
      \ \normalfont(#2)%
    \fi
    \normalfont\bfseries.}%
  \nobreak\hspace{.5em}\ignorespaces
}{%
  \par\addvspace{\topsep}%
}
\makeatother

\usepackage{enumerate}
\usepackage[shortlabels]{enumitem}

\numberwithin{equation}{section}

\usepackage[ruled,vlined]{algorithm2e}
\allowdisplaybreaks
\usepackage{algorithmicx, algpseudocode}
\usepackage[capitalize]{cleveref}

\newcommand{\rme}{{\mathrm{e}}}

\newcommand{\R}{{\mathbb{R}}} 
 
\newcommand{\reals}{\mathbb{R}}

\DeclareSymbolFont{bbold}{U}{bbold}{m}{n}
\DeclareSymbolFontAlphabet{\mathbbold}{bbold}

\newcommand{\calB}{{\mathcal{B}}}

\newcommand{\calK}{{\mathcal{K}}}

\newcommand{\calM}{{\mathcal{M}}}

\newcommand{\calO}{{\mathcal{O}}}
\newcommand{\calP}{{\mathcal{P}}}

\newcommand{\calR}{{\mathcal{R}}}
\newcommand{\calS}{{\mathcal{S}}}

\newcommand{\defeq}{:=}

\newcommand{\abs}[1]{\left \vert #1 \right \vert}

\newcommand{\norm}[1]{\left \lVert #1 \right \rVert}

\newcommand*\Rp{\reals^p}

\newcommand*\mykappa[1]{ \kappa_{\scriptscriptstyle \text{#1}} }

\providecommand{\defeq}{\mathrel{:=}}
\providecommand{\kbar}{\overline{\kappa}}

\providecommand{\calK}{\mathcal{K}}

\newcommand{\Rdelta}{\textsf{R-delta}}
\newcommand{\Rstep}{\textsf{R-step}}
\newcommand{\Rdfo}{\textsf{R-DFO}}
\newcommand{\Rgrad}{\textsf{R-grad}}
\newcommand{\Rrtr}{\textsf{R-RTR}}
\newcommand{\Rgrtr}{\textsf{R-gRTR}}
\newcommand{\RAdaptiveGrad}{\textsf{R-adaptive-grad}}
\newcommand{\RAdaptstep}{\textsf{R-adaptive-step}}
\newcommand{\Rdeltastep}{\textsf{R-delta-step}}

\newcommand{\gone}{\gamma_1}
\newcommand{\gtwo}{\gamma_2}
\newcommand{\gthree}{\gamma_3}

\newif\ifproposals \proposalstrue
\DeclareRobustCommand{\prop}[1]{\ifproposals\textcolor{blue}{#1}\else#1\fi}

\newcommand{\thisdoc}{paper}

\newcommand{\PartI}{\citet{Jeremy2026-I}}
\newcommand{\PartII}{\citet{Jeremy2026-II}}

\newcommand{\PartIref}{Part~I~\citep{Jeremy2026-I}}
\newcommand{\PartIIref}{Part~II~\citep{Jeremy2026-II}}

\usepackage[square,numbers]{natbib}
\setcitestyle{semicolon, aysep={,}}

\title{{\bf A numerical study of radius update rules in trust-region algorithms} }

\author{{\bf J\'{e}r\'{e}my Rieussec} \quad {\bf Fabian Bastin}\\
	University of Montreal, Dept. Computing Science and Operational Research\\
	Email: jeremy.rieussec@umontreal.ca, bastin@iro.umontreal.ca \\
}

\date{\today}

\proposalsfalse  
\begin{document}

\maketitle

\begin{abstract}
  \prop{\PartIref{} and \PartIIref{} establish} which trust-region radius
  update mechanisms \emph{can} fail and under which hypotheses they \emph{must} succeed. Neither says where the
  thresholds sit once a rule runs with real constants, or how the rule interacts with the model
  Hessian and the subproblem solver. This \thisdoc{} turns them into nine falsifiable claims
  and settles what it can, on four designed problems where the theory's quantities are computable
  and on CUTEst, where only aggregate cost is visible. Every mechanism runs in one Julia package
  written for this study, so two runs differing in one component differ in nothing else.

  The rule is worth less than the model and more than nothing. Crossing the mechanisms with four
  model Hessians puts $96.5$ per cent of the variation on the model and $1.6$ on the rule, and
  the rules separate most under the most accurate model rather than least. Uncapped
  $\Rgrad$ is the robust default among the criticality-anchored rules, though the retrospective
  rule is ahead of it on this collection. No constant escapes the side conditions. For every cap there is a minimiser
  at which the constraint stays active, and for every anchoring constant a flatness on which
  escape takes arbitrarily long. The second of these is proved here as well as measured, and the
proof uses no property of the model or of the subproblem solution beyond
$\norm{s_k} \leq \Delta_k$.

  The radius rule governs the \emph{length} of the step, and so the cost of a run. The model
  decides the search direction, and with it whether the run converges at all.
\end{abstract}

\section{Introduction}
\label{sec: introduction}

The trust-region radius update rule is the least standardised component of a trust-region
method. Textbook treatments~\citep{ConnGoulToin00, NoceWrig06} multiply the radius by a constant
chosen from the agreement ratio $\rho_k$, and the literature offers a range of alternatives.
Some rules reset the radius to a multiple of the accepted step~\citep{Hei03, FanPan16}. Others
keep it proportional to a criticality measure~\citep{ConnScheVice09_introduction}, scale it by
the gradient norm~\citep{FanPan16, WangYuan2022, CurtSche20}, or re-judge the previous step with
the model built at the new iterate~\citep{BastMalmMoufToinToma10}. Authors present these as
implementation details, and a reader looking for a reason to prefer one of them finds little.

The two analyses this study rests on~\citep{Jeremy2026-I, Jeremy2026-II} argue instead that they
are structurally different algorithms whose asymptotic behaviour differs. The first isolates three structural conditions on the sequence
$\{\Delta_k\}$ and shows that they suffice for first-order global convergence and
$\calO(\varepsilon^{-2})$ complexity. The second asks what the radius does rather than what it
permits, characterising its asymptotic behaviour, the local dynamics near a nondegenerate
minimiser, a second-order extension through $\tau_k = \max\{\norm{g_k},\, -\lambda_k^-\}$, and
the conditions under which the constraint eventually becomes inactive and the method converges
superlinearly. One axis organises both parts. What decides a rule's behaviour is not whether its
radius vanishes but what the radius is anchored to, the previous radius, the step actually
taken, or a criticality measure.

Together the two parts identify which mechanisms can fail and under which hypotheses they must
succeed. Three questions lie outside their reach. A threshold stated as a function of the
curvature at the solution says nothing about where that threshold sits once a rule runs with
real constants on a problem whose solution nobody knows. A hypothesis stated about the limit
point cannot be checked on a collection assembled precisely because its problems are hard. A
theorem about the radius holds the model Hessian and the subproblem solver fixed, while a user
choosing a solver changes all three at once.

This study answers those questions by experiment, and the first step is to make the theory
answerable. Section~\ref{sec: claims} states nine claims an experiment
can falsify, and records at each one where the restatement changed what was asserted. Two
regimes carry the evidence and neither substitutes for the other. On the designed problems the
curvature at the solution is known in closed form and the limit point can be classified, so one
run is evidence about the mechanism. On the CUTEst collection~\citep{GouldOrbanToint15} none of
those quantities is available and only aggregate cost is visible, so that regime answers the
question a user asks, whether a distinction established on a designed problem is visible in what
the method costs.

Three things are contributed besides the verdicts. Every mechanism is implemented in one Julia
package, \textsf{TrustRegion\-Radius.jl}, written for this study and released with it, in which
the radius rule, the model Hessian, the subproblem solver and the oracle are independent
components of a single iteration, so two runs differing in one of them differ in nothing else.
Four problems are designed so that the quantities the theory is stated about are computable and
the limit point can be classified. And the campaign returns a recommendation, uncapped
$\Rgrad$, worth $59$ problems of $196$ in reliability across the cap sweep.

Section~\ref{sec: package} describes the package and Section~\ref{sec: problems} the problems.
Section~\ref{sec: cost} measures what the mechanisms cost on CUTEst, in sweeps on $\zeta$ and on
$\overline{\mu}$ and in a like-for-like comparison at matched constants.
Section~\ref{sec: inactivity} measures the activity of the constraint and the local order after
it releases. Section~\ref{sec: model vs radius rule} crosses the mechanisms with the model
Hessians and shows how a small constant silences the model.
Sections~\ref{sec: threshold} and~\ref{subsec: flat} return to the designed problems, locating
the activity threshold on P2 and bounding the escape time from a flat region on P4.

\section{From Parts~I and~II to testable claims}
\label{sec: claims}

The two companion analyses~\citep{Jeremy2026-I, Jeremy2026-II} treat the trust-region framework
analytically, proving theorems about the sequences a method generates. We extract claims from those results and confront them with
experiment, for evidence where a claim can be settled and for insight where it cannot.

\label{subsec: two regimes}%
Two regimes carry the evidence and neither substitutes for the other. On a designed problem the
curvature at the solution is known in closed form, the limit point can be classified and the
radius is visible at every iteration, so one run either exhibits the behaviour a claim describes
or it does not, and carries nothing about how often it arises. On CUTEst none of those
quantities is available, so that regime cannot settle a claim whose statement mentions the
solution. It answers the question a user asks, whether a distinction established on a designed
problem is visible in what the method costs.

We present the claims in Table~\ref{tab: claims}, indicating the regime in which each claim is relevant and the section where it is discussed.

\begin{table}[htbp]
    \centering
    \small
    \renewcommand{\arraystretch}{1.15}
    \begin{tabular}{@{}r p{0.56\textwidth} l l@{}}
        \toprule
        & Claim & Regime & Section \\
        \midrule
        \addlinespace[3pt]
        \claimno{claim: asymptotics} &
            \emph{ $\Delta_k \to 0$ for criticality-anchored rules and
            $\liminf_k \Delta_k > 0$ for fixed-factor rules.}
            & designed & \ref{subsec: P1} \\
        \addlinespace[7pt]
        \claimno{claim: rate of decay} &
            \emph{Where the multiplier of a criticality-anchored rule is bounded and
            eventually constant, the decay is harmonic, $\Delta_k = \Theta(1/k)$, so
            $\sum_k \Delta_k$ diverges and $\sum_k \Delta_k^2$ converges. }
            & designed & \ref{subsec: P1} \\
        \addlinespace[7pt]
        \claimno{claim: zeta} &
            \emph{Along convergent runs, there is a problem-dependent threshold in $\zeta$ below which $\Rdfo$ keeps the
            constraint active indefinitely and above which it eventually becomes inactive. 
            Same for $\overline{\mu}$ in $\Rgrad$, and the uncapped
            variant crosses it unconditionally.} & designed, CUTEst & \ref{sec: threshold}, \ref{subsec: zeta sweep}, \ref{subsec: mu sweep} \\
        \addlinespace[7pt]
        \claimno{claim: rate} &
            \emph{Conditional on inactivity, the observed local rate is that of the underlying
            (quasi)-Newton iteration and is independent of the mechanism. Mechanisms differ in when
            inactivity is reached, not in the rate thereafter.}
            & CUTEst & \ref{subsec: order rules} \\
        \addlinespace[7pt]
        \claimno{claim: cauchy} &
            \emph{With truncated CG and $\overline{\mu}$ small enough that the first CG iterate
            leaves the trust region, the step is the Cauchy point and the method inherits
            gradient-descent behaviour.}
            & CUTEst & \ref{subsec: mu sweep} \\
        \addlinespace[7pt]
        \claimno{claim: bad limits} &
            \emph{A configuration can satisfy every hypothesis of \PartI{}, with every $\rho_k$
            successful and a well-behaved radius, and still converge to a saddle point or a
            degenerate critical point that is not a minimiser.}
            & designed & \ref{subsec: spd} \\
        \addlinespace[7pt]
        \claimno{claim: escape} &
            \emph{For every cap there is a flatness $\varepsilon$ on which $\Rgrad$ escapes the
            flat region only after arbitrarily many iterations, while $\Rdelta$ escapes from the
            same starting point in a number bounded independently of $\varepsilon$.}
            & designed & \ref{subsec: flat} \\
        \addlinespace[7pt]
        \claimno{claim: order} &
            \emph{A first-order anchor admits the failures of Claims~\ref{claim: cauchy}~and~\ref{claim: bad limits}. The second-order anchor removes them on the same problems and
            from the same starting points.}
            & designed & \ref{subsec: flat} \\
        \addlinespace[7pt]
        \claimno{claim: no uniform cap} &
            \emph{There exists a function, $f_2$ of~\eqref{eqn: p2}, such that for every cap
            $\overline{\mu}$, however large, one of its minimisers has a basin in which capped
            $\Rgrad$ converges with the trust-region constraint active at every sufficiently
            large iteration.}
            & designed & \ref{sec: threshold} \\
        \addlinespace[7pt]
        \bottomrule
    \end{tabular}
    \caption{The nine claims, the regime in which each can be settled, and the section that
    settles it. Section~\ref{sec: verdict} carries the verdicts.}
    \label{tab: claims}
\end{table}

\section{\textsf{TrustRegionRadius.jl}}
\label{sec: package}

Every number in Sections~\ref{sec: cost} to~\ref{subsec: flat} comes from one Julia package
running one iteration. This section states what that package is, why it had to be written, what
the iteration does, which mechanisms it carries, and what a run records.

\subsection{The package}
\label{subsec: package}
\textsf{TrustRegion\-Radius.jl} was written for this study and is released with it at
\url{https://github.com/JeremyRieussec/TrustRegionRadius.jl} under the MIT licence, with
documentation at \url{https://JeremyRieussec.github.io/TrustRegionRadius.jl/dev/}. It is built on
the JuliaSmoothOptimizers interfaces~\citep{JSO} in Julia~\citep{Julia17} and depends on
\textsf{NLPModels}, \textsf{SolverCore}, \textsf{Krylov} and \textsf{LinearOperators}.

\subsection{What a comparison across published codes cannot measure}
\label{subsec: why a package}

The comparison cannot be assembled from published codes. Julia has trust-region solvers and none
of them exposes the radius rule as a component that can be exchanged.
\textsf{JSOSolvers.jl}~\citep{JSOSolvers} supplies \texttt{trunk}, a trust-region Newton method
with truncated conjugate gradients, and \texttt{tron}, the bound-constrained method
of~\citet{LinMore99}, and \textsf{Optim.jl}~\citep{MogensenRiseth18} supplies
\texttt{NewtonTrustRegion}. Each fixes a single ratio-driven rule and binds it to its own
subproblem solver and its own stopping test, so a gap between two such codes is a gap between
their model constructions, their subproblem solvers, their stopping tests and their linear
algebra as much as between their radius rules. Figure~\ref{fig: package architecture} shows the
arrangement that removes that confound. Four components are independent and swappable, each
supplying exactly one step of the iteration.

\begin{figure}[t]
\centering
\begin{tikzpicture}[
    font=\small,
    axisbox/.style   = {draw, rounded corners=2pt, align=center, inner sep=4pt,
                        minimum height=10mm, fill=black!4},
    stepbox/.style   = {draw, thick, align=center, inner sep=4pt,
                        minimum height=9mm, minimum width=26mm, fill=white},
    outbox/.style    = {draw, dashed, align=center, inner sep=4pt,
                        minimum height=10mm},
    plug/.style      = {-{Latex[length=2mm]}, thin, black!55},
    flow/.style      = {-{Latex[length=2.4mm]}, thick},
    lbl/.style       = {font=\scriptsize\itshape, black!60}
]

\node[axisbox] (ax1) at (0,2.5)
    {\textbf{Model Hessian}\\[1pt]\scriptsize exact, L-BFGS, SR1,\\[-1pt]
     \scriptsize scaled identity, SPD target,\\[-1pt]
     \scriptsize BHHH, BHHH-2, Gauss--Newton};
\node[axisbox, right=6mm of ax1] (ax2)
    {\textbf{Subproblem solver}\\[1pt]\scriptsize Steihaug--Toint CG,\\[-1pt]
     \scriptsize Mor\'e--Sorensen, Krylov CG/CR/\\[-1pt]
     \scriptsize CG-Lanczos, eigenpoint wrapper};
\node[axisbox, right=6mm of ax2] (ax4)
    {\textbf{Radius rule}\\[1pt]\scriptsize eleven mechanisms, the six\\[-1pt]
     \scriptsize criticality-anchored ones with\\[-1pt]
     \scriptsize a variant $\omega_k=\tau_k$};

\node[stepbox] (s1) at (0,0) {Build $m_k$, $\omega_k$\\\scriptsize Step 1};
\node[stepbox, right=7mm of s1] (s2) {Solve for $s_k$\\\scriptsize Step 2};
\node[stepbox, right=7mm of s2] (s3) {$\rho_k$, accept if $\rho_k\!\geq\!\eta$\\\scriptsize Step 3};
\node[stepbox, right=7mm of s3] (s4) {Update $\Delta_{k+1}$\\\scriptsize Step 4};

\draw[flow] (s1) -- (s2);
\draw[flow] (s2) -- (s3);
\draw[flow] (s3) -- (s4);
\draw[flow] (s4.south) -- ++(0,-1.0) -| (s1.south)
    node[lbl, pos=0.82, below] {$k \leftarrow k+1$};

\begin{scope}[on background layer]
    \node[draw, black!35, rounded corners=3pt, fit=(s1)(s2)(s3)(s4),
          inner sep=6pt, fill=black!2] (loop) {};
\end{scope}
\node[lbl, anchor=east] at ($(loop.north east)+(-1mm,3.2mm)$)
    {one iteration, shared by the three problem classes};

\draw[plug] (ax1.south) -- (s1.north);
\draw[plug] (ax2.south) -- (s2.north);
\draw[plug] (ax4.south) -- (s4.north);

\node[axisbox, left=9mm of s1, text width=25mm] (ax3)
    {\textbf{Oracle}\\[1pt]\scriptsize deterministic,\\[-1pt]
     \scriptsize expectation,\\[-1pt] \scriptsize finite sum\\[2pt]
     \scriptsize ten sampling rules};
\draw[plug] (ax3.east) -- (s1.west) node[lbl, midway, above] {$f, g$};

\node[stepbox, minimum width=110mm, minimum height=8mm, fill=black!6]
     (tr) at ($(loop.south)+(0,-2.0)$)
    {Per-iteration record\\[-1pt]
     \scriptsize $\Delta_k$, $f_k$, $\norm{g_k}$, $\norm{s_k}$, $\rho_k$,
     $\widetilde\rho_{k+1}$, accepted, active, branch, CG iterations,
     $\cos(s_k,-g_k)$, $\gamma_k$, $\xi_k$, $\tau_k$, $\lambda_{\min}$};
\draw[plug] ($(loop.south)+(0,-0.55)$) -- (tr.north);

\node[outbox, text width=46mm] (diag) at ($(tr.south)+(-3.1,-1.5)$)
    {\textbf{Diagnostics}\\[1pt]
     \scriptsize $\overline{\kappa}$, $\theta_k$, activity fraction,\\[-1pt]
     \scriptsize inactivity index, branch counts,\\[-1pt]
     \scriptsize observed order, radius sums};
\node[outbox, text width=46mm] (bench) at ($(tr.south)+(3.1,-1.5)$)
    {\textbf{Benchmark}\\[1pt]
     \scriptsize run matrix, performance and\\[-1pt]
     \scriptsize data profiles, parameter sweeps,\\[-1pt]
     \scriptsize \textsc{pgfplots} export};
\draw[plug] (tr.south) -- (diag.north);
\draw[plug] (tr.south) -- (bench.north);

\end{tikzpicture}
\caption{Architecture of \textsf{TrustRegionRadius.jl}. The radius rule, the model Hessian,
the subproblem solver and the oracle are independent axes, each supplying one step of a single
iteration that every configuration shares. Step~1 also evaluates the criticality measure
$\omega_k$, which is $\norm{g_k}$ by default and $\tau_k$ under the second-order wrapper, and
Step~3 accepts on $\eta$ alone while Step~4 scales on $\eta_1$ and $\eta_2$. The per-iteration
record is written by the driver and is the only input to the diagnostics and to the benchmark
layer.}
\label{fig: package architecture}
\end{figure}

A rule is a struct and two methods, a model a struct and two methods, so a mechanism proposed
after this \thisdoc{} can be compared against the roster below without touching the driver.
\subsection{The driver}
\label{subsec: driver}

Algorithm~\ref{alg: tr solve} is the loop every configuration shares.

\begin{algorithm}[!tbp]
\small
\caption{The trust-region driver, \texttt{tr\_solve}}
\label{alg: tr solve}
\DontPrintSemicolon
\KwIn{$x_0$; rule $\calR$, model $\calM$, subsolver $\calS$; thresholds
      $0 \leq \eta \leq \eta_1 \leq \eta_2 < 1$; $\Delta_0$, $\Delta_{\min}$, $\Delta_{\max}$;
      tolerances $\epsilon$ and $\epsilon_H$; budgets $k_{\max}$, $t_{\max}$}
\KwOut{$x_k$ and a status}
\BlankLine
$f \leftarrow f(x_0)$; $g \leftarrow \nabla f(x_0)$;
$\omega \leftarrow \texttt{criticality}(\calR, \norm{g}, \lambda_{\min})$
  \tcp*{$\norm{g}$, or $\tau$ under \texttt{SecondOrder}}
$\Delta \leftarrow \operatorname{clamp}\{\texttt{initial\_radius}(\calR, \Delta_0, \omega),\
  \Delta_{\min},\ \Delta_{\max}\}$; $k \leftarrow 0$\;
\While{true}{
    \If{$\norm{g} \leq \epsilon$ \textbf{and}
        \textup{(}$\epsilon_H \leq 0$ \textbf{or} $\lambda_{\min} \geq -\epsilon_H$\textup{)}}{
        \KwRet \texttt{first\_order} if $\epsilon_H \leq 0$, else \texttt{second\_order}\;
    }
    \lIf{$k \geq k_{\max}$}{\KwRet \texttt{max\_iter}}
    \lIf{elapsed time $> t_{\max}$}{\KwRet \texttt{max\_time}}
    $k \leftarrow k+1$\;
    \BlankLine
    $(s, \texttt{active}) \leftarrow \texttt{solve\_subproblem}(\calS, \calM, x, g, \Delta)$
      \tcp*{Algorithm~\ref{alg: SteihaugCG}}
    \lIf{the model does not exist at $x$}{\KwRet \texttt{exception}}
    \BlankLine
    $\text{pred} \leftarrow -g^\top s - \tfrac12 s^\top H_k s$;
    $\text{ared} \leftarrow f(x) - f(x+s)$\;
    $\rho \leftarrow \text{ared}/\text{pred}$ if $\text{pred} > 0$, else $-\infty$\;
    \lIf{$\abs{\text{ared}} \leq \varepsilon_{\mathrm{mach}}\max(1,\abs{f})$
         \textbf{and} $\norm{s} < \varepsilon_{\mathrm{mach}}^{3/4}$}{\KwRet \texttt{stalled}}
    \BlankLine
    $\omega^{-} \leftarrow \omega$; $\texttt{accepted} \leftarrow (\rho \geq \eta)$
      \tcp*{acceptance uses $\eta$ alone, never $\eta_1$}
    \If{\texttt{accepted}}{
        $y \leftarrow \nabla f(x+s) - g$;
        $x \leftarrow x + s$; $f \leftarrow f(x)$; $g \leftarrow \nabla f(x)$;
        $\texttt{update\_model}(\calM, s, y)$\;
        $\omega \leftarrow \texttt{criticality}(\calR, \norm{g}, \lambda_{\min})$\;
    }
    \BlankLine
    \eIf{$\texttt{needs\_retrospective}(\calR)$ \textbf{and} \texttt{accepted}}{
        $\widetilde\rho \leftarrow \texttt{retrospective\_ratio}(\text{ared}, s, g, H_{k+1}s)$\;
    }{
        $\widetilde\rho \leftarrow \rho$\;
    }
    \BlankLine
    $\Delta \leftarrow \texttt{update\_radius}(\calR, \Delta, \widetilde\rho,
      \texttt{accepted}, \eta_1, \eta_2, \norm{s}, \omega^{-}, \omega)$
      \tcp*{the only line the rule touches}
    $\Delta \leftarrow \operatorname{clamp}\{\Delta,\ \Delta_{\min},\ \Delta_{\max}\}$\;
}
\end{algorithm}

Acceptance and scaling are two decisions. The step is accepted when $\rho_k \geq \eta$, and
$\eta_1$ and $\eta_2$ only scale the radius. Taking $\eta < \eta_1$ opens the band
$\rho_k \in [\eta, \eta_1)$, on which the step is accepted and the radius contracts anyway.
Setting $\eta = \eta_1$ recovers the coupled behaviour exactly, and an implementation that
accepted on $\eta_1$ could not express the band at all. 

The rule receives the criticality measure twice, before and after the acceptance decision.
$\Rdfo$ gates on the value at the current iterate, $\Rgrad$ and its relatives set the next radius
from the value at the next one, and passing both keeps the rules comparable without a special
case in the loop. The measure itself is supplied by \texttt{criticality}, which returns
$\norm{g_k}$ by default and $\tau_k = \max\{\norm{g_k}, -\lambda^-_k\}$ when the rule is wrapped
in \texttt{SecondOrder}. The update subroutine is identical in both cases. Only the quantity
placed in those two slots changes, and that is the entire content of the second-order variant.
The retrospective ratio compares the achieved reduction with the reduction the \emph{new} model
predicts for the step just taken,
\begin{equation}
    \widetilde\rho_{k+1}
      = \frac{f(x_k) - f(x_{k+1})}{-g_{k+1}^\top s_k + \tfrac12 s_k^\top H_{k+1} s_k} ,
    \label{eqn: retrospective ratio}
\end{equation}
set to $-\infty$ when the denominator is not positive.
The acceptance flag is passed to the rule. $\Rrtr$ and $\Rgrtr$ branch on it before anything
else, because $\widetilde\rho$ judges a step that was not taken when the step was rejected.

\subsection{The radius mechanisms under test}
\label{subsec: roster}

Eleven mechanisms are implemented and Appendix~\ref{app: update mechanisms} states them all.
They differ along three axes, and Table~\ref{tab: roster axes} places each one on them. The
eleven were chosen so that every comparison in this \thisdoc{} moves one axis and holds the other
two. Six are anchored to a criticality measure, and each of those runs at either order of
criticality with no change to the rule, giving the $\omega_k = \tau_k$ column of
Section~\ref{sec: cost}.

The updates of Appendix~\ref{app: update mechanisms} are set-valued on their contraction and
expansion branches. An implementation has to select a point from each interval, and the package
selects the endpoint nearest the boundary the branch is named for, so
$\Delta_{k+1} = \gone\Delta_k$ on contraction, $\gtwo\Delta_k$ on the intermediate branch and
$\gthree\Delta_k$ on expansion. Every rule takes $\eta$, $\eta_1$ and $\eta_2$ from the driver
rather than carrying its own, so a comparison at fixed thresholds is a comparison at genuinely
fixed thresholds.
The package departs from \PartII{} on $\Rrtr$ in one respect: it holds the safeguard factor
at $\gone$, so that a step with a negative ratio gives
$\Delta_{k+1} = \min\{\gtwo\norm{s_k}, \gone\Delta_k\}$, the smallest radius the rule allows.
Both retrospective rules take acceptance from $\eta$ alone, which equals $\eta_1$ in every
campaign.

\begin{table}[htbp]
    \centering
    \small
    \begin{tabular}{@{}lllllll@{}}
        \toprule
        Update & Anchor & Ratio & Factor & Cap & Stated in & Type in the package \\
        \midrule
        $\Rdelta$                                & previous radius     & $\rho_k$                 & fixed    & ---              & I, II  & \texttt{RDelta} \\
        $\Rstep$                                 & step                & $\rho_k$                 & fixed    & ---              & I, II  & \texttt{RStep} \\
        $\Rdeltastep$                            & prev.\ radius, step & $\rho_k$                 & fixed    & ---              & I      & \texttt{RDeltaStep} \\
        $\Rdfo(\omega)$                          & criticality measure & $\rho_k$                 & fixed    & ---              & I, II  & \texttt{RDFO} \\
        $\Rgrad(\omega)$                         & criticality measure & $\rho_k$                 & fixed    & none             & I, II  & \texttt{RGrad} \\
        $\Rgrad(\omega, \overline{\mu})$         & criticality measure & $\rho_k$                 & fixed    & $\overline{\mu}$ & II     & \texttt{RGradCapped} \\
        $\RAdaptstep$                            & step                & $\rho_k$                 & adaptive & ---              & I      & \texttt{RAdaptiveStep} \\
        $\RAdaptiveGrad(\omega)$                 & criticality measure & $\rho_k$                 & adaptive & none             & III    & \texttt{RAdaptiveGrad} \\
        $\RAdaptiveGrad(\omega, \overline{\mu})$ & criticality measure & $\rho_k$                 & adaptive & $\overline{\mu}$ & III    & \texttt{RAdaptiveGradCapped} \\
        $\Rrtr$                                  & step, prev.\ radius & $\widetilde{\rho}_{k+1}$ & fixed    & ---              & II     & \texttt{RRTR} \\
        $\Rgrtr(\omega, \overline{\mu})$         & criticality measure & $\widetilde{\rho}_{k+1}$ & fixed    & $\overline{\mu}$ & II     & \texttt{RRTRGrad} \\
        \bottomrule
    \end{tabular}
    \caption{The mechanisms under scrutiny, and the type that implements each one. The
             correspondence is one to one. A capped rule is a separate type rather than a
             keyword, so a sweep over $\overline{\mu}$ and a run without a cap are different
             configurations and never one object in two states. Where this \thisdoc{} takes the
             capped form as primary and the uncapped one as its variant, the package does the
             reverse, \texttt{RGrad} being uncapped and \texttt{RGradCapped} the variant; the
             rules are the same and only the emphasis is inverted. The second-order readings
             $\omega_k = \tau_k$ are not further types, \texttt{SecondOrder} wrapping any of the
             criticality-anchored six. The last column but one records where each update is
             first stated, and $\Rdelta$, $\Rdfo$ and $\Rgrad$ are stated differently in
             Parts~I and~II, Section~\ref{sec: introduction} recording which statement is
             implemented here.}
    \label{tab: roster axes}
\end{table}

$\RAdaptstep$ is the self-adaptive rule of \citet{Hei03}, for which we define 
\begin{equation}
    R_{\eta_1}(t) =
    \begin{cases}
        \gone + (\gtwo - \gone)\,\rme^{\lambda_1 (t - \eta_1)},
            & t < \eta_1, \\[2pt]
        \gthree + (M - \gthree)
            \bigl(1 - \rme^{-\lambda_2 (t - \eta_1)}\bigr),
            & t \geq \eta_1,
    \end{cases}
    \label{eqn: hei factor}
\end{equation}
which satisfies the four conditions of
Update~\ref{update: adaptive step-size dependent update}. \PartI{} writes the same factor in a narrower normalisation, with $1 + \gtwo$
where~\eqref{eqn: hei factor} has $\gthree$ and $\gthree$ where it has $M$.
Applying it to $\norm{s_k}$ gives
$\RAdaptstep$ and applying it to the multiplier $\mu$ that then multiplies $\omega_{k+1}$ gives
$\RAdaptiveGrad$. The Fan--Yuan multiplier is the one carried by $\Rgrad$ and $\Rgrtr$ and is not
a separate rule.
\subsection{What a run records}
\label{subsec: instruments}

With tracing on, the driver writes one record per iteration, and that record is the only input to
every number reported below. It holds the radius, the objective, the gradient norm, the step
norm, both ratios, whether the step was accepted, whether the trust-region constraint was
binding, the branch the rule took, the number of inner iterations, the cosine between the step
and the steepest-descent direction, and, where a rule or a stopping test requires it, the
criticality measure and the leftmost model Hessian eigenvalue. The radius, the objective and the
gradient norm have a value before the first iteration and so are one entry longer than the rest,
which matters to anything plotting them together.

The acceptance flag is kept even though it looks redundant. When $\eta < \eta_1$ the rule's
thresholds no longer coincide with the one that decided the step, so the accepted set cannot be
reconstructed from the ratios.

Six derived quantities are computed from that record and nothing else.

\begin{itemize}[leftmargin=1.6em, itemsep=2pt]
    \item The ratio $\theta_k \defeq \Delta_k/\omega_k$, which is the multiplier $\mu_k$ itself
          under $\Rgrad$ and is the quantity every activity threshold is stated in.
    \item The \emph{activity fraction} over a tail of the run, the proportion of iterations on
          which $\norm{s_k} = \Delta_k$ within the subproblem solver's own tolerance. It is
          computed only for runs of at least $30$ iterations, a tail of one iteration taking only
          the values $0$ and $1$ and measuring rounding rather than asymptotics.
    \item The \emph{inactivity index} $k^\star$, the first iteration after which the constraint
          never binds again, undefined when the final iteration is active.
    \item The \emph{observed order}, the ordinary least-squares slope of $\log\norm{g_{k+1}}$ on
          $\log\norm{g_k}$ over the accepted iterations of a window beginning at $k^\star$ and
          ending at termination. It is reported with the number of points it used and is
          undefined below six.
\end{itemize}

The activity fraction is the one instrument both regimes of Section~\ref{subsec: two regimes}
share, and it is the only one that needs no knowledge of the solution. It is also the quantity
that separates mechanisms a first-order convergence test cannot tell apart, both of them driving
$\norm{g_k}$ to zero.

\subsection{Availability}
\label{subsec: availability}

Each campaign writes a timestamped archive holding the configuration in TOML, a summary in
Markdown, the figures, the tables and the raw data, so that an archive describes the run that
produced it. Every number in this \thisdoc{} is traceable to one such archive, named where it is first
used. Every campaign reported here was run at one release of the package,
TrustRegionRadius.jl v0.1.0, though at five distinct commits within it, each recorded in the
archive it produced. The archives are deposited with that release at
10.5281/zenodo.22655838, so that a number, the code that produced it and the configuration
it was produced under are recovered together.

\section{The test problems}
\label{sec: problems}
Two families carry the evidence, and Section~\ref{subsec: two regimes} says why neither
substitutes for the other. The designed problems of Sections~\ref{subsec: P1}
to~\ref{subsec: P4} are stated here in closed form, with their critical points, their
curvatures and their starting points, so that nothing in Sections~\ref{sec: cost}
to~\ref{subsec: flat} rests on a quantity the reader has to take on trust. The CUTEst
collection of Section~\ref{subsec: cutest} supplies the aggregate regime.

\subsection{What a designed problem has to supply}
\label{subsec: designed criteria}

Three requirements decide the family. Each problem states its critical points in closed form, so
a run is classified by where it stops and not merely by whether it stopped. Each carries a
computable curvature at the point the method reaches, so the exact activity threshold
$1/\lambda^*_{\min}$ is a number rather than an estimate. Each isolates one mechanism, so a
single run is evidence about that mechanism.

The four are ordered by what they do to the curvature at the limit. P1 has no critical point at
all, so it separates the rules on the radius alone. P2 has a sequence of minimisers whose
curvature decays to zero along one fixed function, so it supplies a sequence of thresholds
without changing the problem. P3 has a degenerate critical point at which the second-order
measure vanishes, a nonsingular saddle at which it does not, and two nondegenerate minimisers,
all on one polynomial. P4 has a nonsingular saddle whose negative curvature is the parameter, so
it measures how long escape takes as that curvature goes to zero.
Table~\ref{tab: designed summary} states the four together.

\begin{table}[htbp]
    \centering
    \small
    \caption{The four designed problems. Column five is the smallest eigenvalue of the Hessian
    at the minimiser, which is the quantity every threshold in this \thisdoc{} is stated in. P1 has
    no minimiser and so no threshold, and it carries the radius trajectory alone. P2 supplies a sequence of thresholds along one function, P3 a
    single threshold in two coupled dimensions, and P4 a threshold tunable to zero.}
    \label{tab: designed summary}
    \begin{tabular}{@{}llllll@{}}
        \toprule
        & $n$ & objective & limit the method reaches & $\lambda^*_{\min}$ & isolates \\
        \midrule
        P1 & $1$ & $\rme^{-x}$
           & none, $x_k \to +\infty$ & n/a
           & radius asymptotics \\
        P2 & $1$ & $\sin(x)/x$
           & minimiser $y_j$ & $(1+y_j^2)^{-1/2}$
           & the activity threshold \\
        P3 & $2$ & \eqref{eqn: p3}
           & minimiser, saddle or origin & $0.4946892$
           & which limits the measures see \\
        P4 & $2$ & \eqref{eqn: p4}
           & minimiser or saddle & $2\varepsilon$
           & escape time as $\lambda^- \to 0$ \\
        \bottomrule
    \end{tabular}
\end{table}

The four are not interchangeable and each answers a question the others cannot. P1 has no
minimiser, so no statement about inactivity applies to it and every difference between the rules
is a difference in the radius. P2 has a diagonal Hessian by construction, being
one-dimensional. P3 has a coupled Hessian at the minimiser, three kinds of critical point and
no parameter. P4 has one kind of failure and a parameter that drives it, which is what a rate
measurement needs and what a fixed problem cannot give.

\subsection{P1, the exponential}
\label{subsec: P1}

The first problem is
\begin{equation}
    f_1(x) = \rme^{-x} , \qquad
    f_1'(x) = -\rme^{-x} , \qquad
    f_1''(x) = \rme^{-x} > 0 ,
    \qquad x_0 = 0 .
    \label{eqn: p1}
\end{equation}
The objective is strictly convex and bounded below by zero, it has no critical point, and its
infimum is not attained. The iterates therefore have no accumulation point and travel to
$+\infty$, and the gradient decays at the known rate $\norm{g_k} = \rme^{-x_k}$. None of that
puts P1 outside \PartI{}. It is bounded below, its gradient is Lipschitz on any set bounded to
the left, and its Hessian is uniformly bounded there. Attainment is what the assumptions do not
require.

Two properties make P1 the right instrument. The Newton step is $1$ everywhere, since
$-f_1'/f_1'' = 1$, so under the exact Hessian $s_k = \min\{1, \Delta_k\}$ and
\begin{equation}
    \rho_k = \frac{1 - \rme^{-s_k}}{s_k - \tfrac12 s_k^2} > 1
    \qquad \text{for every } s_k \in (0,1] ,
    \label{eqn: p1 rho}
\end{equation}
since $1 - \rme^{-s} - s + \tfrac12 s^2$ vanishes at $s = 0$ with derivative
$\rme^{-s} - 1 + s \geq 0$. Every iteration is therefore very successful and nothing but the rule
ever moves the radius. And the gradient decays at a known rate, so a criticality-anchored radius
decays in closed form.

\PartII{} does that calculation on this problem: once the multiplier sits at its cap and the
radius has fallen below the Newton step, $x_{k+1} = x_k + \Delta_k$ and
$\Delta_{k+1} = \Delta_k \rme^{-\Delta_k}$, whence $k\Delta_k \to 1$. The decay is geometric in
$x$ and harmonic in $k$ only because the iterates reach $x_k = \ln k + \calO(\ln\ln k)$ and no
further. The cap carries that argument, which is why Claim~\ref{claim: rate of decay} is stated
for a multiplier bounded and eventually constant. What is left here is the measurement.

    Under capped $\Rgrad$ with $\mu_0 = \overline{\mu} = 1$, the exact Hessian and truncated
    conjugate gradients, from $x_0 = 0$ over $2 \times 10^{5}$ iterations, $k\Delta_k$ reads
    $0.8698$, $0.9746$, $0.99627$, $0.99951$ and $0.99994$ at
    $k = 10, 10^{2}, 10^{3}, 10^{4}, 10^{5}$. The two series separate as the claim asserts:
    $\sum_{k \leq m}\Delta_k$ gives $9.2109$ against $\ln m = 9.2103$ at $m = 10^{4}$ and
    $11.5130$ against $11.5129$ at $m = 10^{5}$, which is $x_k = \ln k + \calO(1)$ since
    $x_k = x_0 + \sum_{j<k}\Delta_j$, while $\sum_k \Delta_k^2$ has reached $1.4083$ with a tail
    of order $10^{-5}$. One call, recorded in \texttt{TrustRegionRadius.jl/\allowbreak benchmark/\allowbreak experiments/\allowbreak p1\_harmonic\_decay.jl}.
    Appendix~\ref{app: p1 asymptotics} draws the same problem with the other four mechanisms
    beside this one, where the contrast between the anchors is what the picture carries.

\paragraph{Verdict on Claims~\ref{claim: asymptotics} and~\ref{claim: rate of decay}. Both settled
here.}
Claim~\ref{claim: asymptotics} asks for $\Delta_k \to 0$ under a criticality-anchored rule and
$\liminf_k \Delta_k > 0$ under a fixed factor, and one problem answers both because the contrast
is the whole of it. Capped $\Rgrad$ drives the radius from $1$ down to $5.00 \times 10^{-5}$ over
$2 \times 10^{4}$ iterations, monotonically. $\Rdelta$ from the same $\Delta_0$ never returns
below its initial radius, which is therefore its minimum over the run, and reaches
$2.68 \times 10^{8}$ before the gradient test stops it at iteration $29$. Every iteration of both
is very successful by~\eqref{eqn: p1 rho}, so nothing but the rule moves the radius.
Claim~\ref{claim: rate of decay} is the rate at which the first of the two falls, and the reading
above supplies it. What P1 cannot say is whether the harmonic rate survives near a minimiser,
since it has none; that question belongs to P2 and P3 and is not asked here.

\subsection{P2, the cardinal sine}
\label{subsec: P2}

The second problem is
\begin{equation}
    f_2(x) = \frac{\sin x}{x} , \qquad f_2(0) \defeq 1 ,
    \label{eqn: p2}
\end{equation}
which is $C^\infty$ on $\reals$ with $\abs{f_2''} \leq \tfrac13$ everywhere, so the model Hessian
is uniformly bounded whatever the iterates do. Writing $f_2'(x) = (x\cos x - \sin x)/x^2$, the
critical points are $x = 0$ and the roots of $\tan x = x$, one in each interval
$(j\pi, (j+\tfrac12)\pi)$, for which
\begin{equation}
    f_2''(y_j) = \frac{(-1)^{j+1}}{\sqrt{1 + y_j^2}},
    \label{eqn: sinc curvature}
\end{equation}
each of them the curvature is exactly given by this expression.
$y_j$ is a minimiser for $j$ odd and a local maximiser for $j$ even.
Table~\ref{tab: sinc values} lists the first values and the constants the thresholds are
stated in.

\begin{table}[tbp]
    \centering
    \caption{Critical points of $f_2(x) = \sin(x)/x$, and the constants attached to each
        minimiser. The root solve carries about $70$ decimal digits and has residual
        $\max_j \abs{f_2'(y_j)} = 1.28 \times 10^{-65}$, and~\eqref{eqn: sinc curvature} holds at
        the returned roots to $2.78 \times 10^{-17}$ in double precision.
        The last three columns are the exact threshold for capped $\Rgrad$
        of~\eqref{eqn: rgrad threshold}, the limiting value of the provable threshold for
        $\Rdfo$ of~\eqref{eqn: sinc zeta threshold}, and the operative constant
        $\kbar = 8/\lambda^*_j$ of \PartII{}. A maximiser carries no threshold.}
    \label{tab: sinc values}
    \small
    \begin{tabular}{rrrlrrr}
        \toprule
        $j$ & $y_j$ & $\lambda^*_j = f_2''(y_j)$ & type
          & $1/\lambda^*_j$ & $1/(5\lambda^*_j)$ & $\kbar = 8/\lambda^*_j$ \\
        \midrule
        $1$ & $4.493409458$ & $0.21723363$ & minimiser & $4.60$ & $0.92$ & $36.83$ \\
        $2$ & $7.725251837$ & $-0.12837455$ & maximiser & & & \\
        $3$ & $10.904121659$ & $0.091325203$ & minimiser & $10.95$ & $2.19$ & $87.60$ \\
        $4$ & $14.066193913$ & $-0.070913459$ & maximiser & & & \\
        $5$ & $17.220755272$ & $0.057971802$ & minimiser & $17.25$ & $3.45$ & $138.00$ \\
        $6$ & $20.371302959$ & $-0.049029624$ & maximiser & & & \\
        $7$ & $23.519452499$ & $0.042479617$ & minimiser & $23.54$ & $4.71$ & $188.33$ \\
        $8$ & $26.666054259$ & $-0.037474520$ & maximiser & & & \\
        $\vdots$ & $\vdots$ & $\vdots$ & & $\vdots$ & $\vdots$ & $\vdots$ \\
        $21$ & $67.529434777$ & $0.014806734$ & minimiser & $67.54$ & $13.51$ & $540.29$ \\
        $51$ & $161.785840728$ & $0.0061808926$ & minimiser & $161.79$ & $32.36$ & $1294.31$ \\
        $101$ & $318.868518261$ & $0.0031360734$ & minimiser & $318.87$ & $63.77$ & $2550.96$ \\
        \bottomrule
    \end{tabular}
\end{table}

In one dimension the subproblem is solved exactly. For a quadratic model of curvature $a > 0$
the solution is $s_k = -g_k/a$ when $\abs{g_k}/a \leq \Delta_k$ and
$s_k = -\Delta_k\operatorname{sign}(g_k)$ otherwise, so, writing
$\theta_k \defeq \Delta_k/\abs{g_k}$,
\begin{equation}
    \text{the trust region is active at } k
    \iff \theta_k < 1/a_k ,
    \qquad a_k \defeq f_2''(x_k) .
    \label{eqn: sinc activity}
\end{equation}
The exact threshold is therefore $1/\lambda^*_{\min}$.
The curvature at the minimisers decays like $1/y_j$, so one fixed function supplies a sequence
of thresholds that runs to zero.Sweeping $j$ moves $1/\lambda^*_{\min}$ over an order of magnitude while the objective, the smoothness and the Hessian bound stay fixed. That is what P2 is for, and it is why a threshold
located here cannot be an artefact of a change of problem.

\subsection{P3, the quartic with a degenerate critical point}
\label{subsec: P3}

The third problem is
\begin{equation}
    f_3(x,y) = x^4 - x^3 + \Bigl(\tfrac14 - \tfrac{x}{2}\Bigr) y^2 + \tfrac14 y^4 ,
    \label{eqn: p3}
\end{equation}
with
\begin{equation}
    \nabla f_3(x,y) =
    \begin{pmatrix} 4x^3 - 3x^2 - \tfrac12 y^2 \\[2pt] \bigl(\tfrac12 - x\bigr) y + y^3 \end{pmatrix} ,
    \qquad
    \nabla^2 f_3(x,y) =
    \begin{pmatrix} 12x^2 - 6x & -y \\[2pt] -y & \tfrac12 - x + 3y^2 \end{pmatrix} .
    \label{eqn: p3 derivatives}
\end{equation}
The objective is coercive and bounded below, so the level sets are compact and the Hessian is bounded
on each of them. There are four critical points and no others, and Table~\ref{tab: p3 critical points} lists them.

\begin{table}[htbp]
    \centering
    \caption{The four critical points of $f_3$. The measure $\tau$ is the second-order
        criticality measure $\max\{\norm{g}, -\lambda^-\}$ of \PartII{}, evaluated with the
        exact Hessian. It is positive at the nonsingular saddle and zero at the degenerate
        point, which is the distinction P3 exists to draw. Writing $s = (\sqrt5-1)/4$, the
        minimisers have $\nabla^2 f_3 = \bigl(\begin{smallmatrix} 3 & \mp\sqrt{s} \\ \mp\sqrt{s}
        & 2s \end{smallmatrix}\bigr)$, trace $3+2s$, determinant $5s$ and discriminant
        $(25-5\sqrt5)/2$.}
    \label{tab: p3 critical points}
    \small
    \begin{tabular}{@{}llrrrl@{}}
        \toprule
        Point & $\nabla^2 f_3$ & $\lambda_{\min}$ & $\lambda_{\max}$ & $\tau$ & type \\
        \midrule
        $(0,0)$
          & $\operatorname{diag}(0,\ \tfrac12)$
          & $0$ & $0.5$ & $0$
          & degenerate, not a minimiser \\
        $(\tfrac34,\, 0)$
          & $\operatorname{diag}(\tfrac94,\ -\tfrac14)$
          & $-0.25$ & $2.25$ & $0.25$
          & nonsingular saddle \\
        $\bigl(\tfrac{1+\sqrt5}{4},\, +\sqrt{s}\bigr)$
          & see caption
          & $0.4946892$ & $3.1233448$ & $0$
          & minimiser, $f_3 = -\tfrac18$ \\
        $\bigl(\tfrac{1+\sqrt5}{4},\, -\sqrt{s}\bigr)$
          & see caption
          & $0.4946892$ & $3.1233448$ & $0$
          & minimiser, $f_3 = -\tfrac18$ \\
        \bottomrule
    \end{tabular}
\end{table}

The two minimisers are nondegenerate and  the minimum value is exactly $-\tfrac18$. The threshold follows,
\begin{equation}
    \begin{gathered}
        \lambda^*_{\min}
            = \frac{1}{2}\left(3 + 2s - \sqrt{\tfrac{25-5\sqrt5}{2}}\,\right)
            = 0.4946892 , \\[4pt]
        \frac{1}{\lambda^*_{\min}} = 2.021471 , \qquad
        \kbar = \frac{8}{\lambda^*_{\min}} = 16.171770 .
    \end{gathered}
    \label{eqn: p3 threshold}
\end{equation}
The other two critical points are what P3 is really for, because they fail in different ways and
one criticality measure separates them while the other does not. At $(\tfrac34, 0)$ the Hessian
is $\operatorname{diag}(\tfrac94, -\tfrac14)$, indefinite and nonsingular, so the second-order
measure reports $\tau = \tfrac14$ and a $\tau$-anchored rule keeps a radius bounded away from
zero there. At the origin the Hessian is $\operatorname{diag}(0, \tfrac12)$, positive
semidefinite and singular, so both measures vanish, $\norm{g} = 0$ and $\tau = 0$. The origin
satisfies the second-order necessary conditions but is not a local minimiser. The repair of
Claim~\ref{claim: order} therefore has a boundary, and P3 exhibits it on the same problem that
exhibits the repair working.

\subsection{P4, the flat well}
\label{subsec: P4}

The fourth problem is a one-parameter family. For $\varepsilon \in (0, \tfrac12)$ let
\begin{equation}
    f_4(x,y;\varepsilon) = \tfrac12 x^2
        + \varepsilon \left( \tfrac14 y^4 - \tfrac12 y^2 \right),
    \label{eqn: p4}
\end{equation}
with $\nabla f_4 = \bigl(x,\ \varepsilon y(y^2-1)\bigr)^\top $, so the critical points are the origin and $(0,\pm1)$, and
\begin{equation}
    \nabla^2 f_4(0,0) = \operatorname{diag}(1, -\varepsilon) ,
    \qquad
    \nabla^2 f_4(0,\pm1) = \operatorname{diag}(1, 2\varepsilon) .
    \label{eqn: p4 hessians}
\end{equation}
The origin is a saddle with $\det \nabla^2 f_4(0,0) = -\varepsilon \neq 0$, so it is nonsingular
for every $\varepsilon > 0$ and the second-order measure correctly reports
$\tau(0,0) = \varepsilon$. At the minimisers $\lambda^*_{\min} = 2\varepsilon$, so
$1/\lambda^*_{\min} = 1/(2\varepsilon)$ and $\kbar = 4/\varepsilon$. P4 is therefore designed 
to illustrate slow escape from a flat well as $\varepsilon \to 0$.

The minimisers sit at distance $1$ from the saddle whatever $\varepsilon$ is, so the parameter
tunes the escape rate without moving the geometry. That separation is what P4 contributes and
what P3 cannot. Along the unstable direction, and on the invariant line $x = 0$, a
curvature-blind model $B_k = cI$ gives the step $-g_y/c$ and the recursion
$y_{k+1} = y_k\bigl(1 + (\varepsilon/c)(1 - y_k^2)\bigr)$, an Euler discretisation of
$\dot y = (\varepsilon/c)\,y(1-y^2)$. Separating and integrating from $y_0$ to the escape
threshold $\abs{y} = \tfrac12$ gives the antiderivative
$\ln\bigl(y/\sqrt{1-y^2}\bigr)$ and hence
\begin{equation}
    \frac{\varepsilon}{c}\, k_{\mathrm{esc}}
    \;\longrightarrow\; \ln\frac{1}{\sqrt3\, y_0}
    \qquad \text{as } \varepsilon \to 0 .
    \label{eqn: escape time}
\end{equation}

\begin{remark}[Where the escape constant comes from]
\label{rem: escape constant}
The threshold $\abs{y} = \tfrac12$ and the factor $1 - y^2$ in the gradient together fix the
constant at $\ln(1/(\sqrt3\, y_0))$, which is $6.3584$ at $y_0 = 10^{-3}$ against a measured
$\varepsilon k_{\mathrm{esc}}$ of $6.3588$ at $\varepsilon = 10^{-4}$. Two cruder constants
circulate and neither fits. Bounding the gradient by $\abs{g_y} \leq \varepsilon\abs{y}$ drops
the factor $1-y^2$ and gives $\ln(1/y_0) = 6.9078$. Taking the escape threshold alone gives
$\ln(1/(2y_0)) = 6.2146$, whose residual does not vanish with $\varepsilon$ but settles at
$\ln 2 - \tfrac12 \ln 3 = 0.1438$.
\end{remark}

\subsection{CUTEst}
\label{subsec: cutest}

The aggregate study uses the CUTEst collection~\citep{GouldOrbanToint15}, the current form of a
test set that began as CUTE~\citep{BongartzConnGouldToint95} and passed through
CUTEr~\citep{GouldOrbanToint03}. It gathers problems contributed over four decades from
applications and from the earlier collections of Hock and
Schittkowski~\citep{HockSchittkowski81} and others, in a common SIF description with exact
derivatives supplied by the decoder, so that no two configurations differ in the accuracy of the
gradients they receive.

The selection is every unconstrained problem with $2 \leq n \leq 1000$, which gives $196$
problems. The lower bound removes the scalar problems, on which a trust region carries no
directional information. The upper bound keeps the campaigns inside the time cap of
Section~\ref{subsec: settings} and keeps every configuration on the same problems, the cost of a
truncated-conjugate-gradient run growing with $n$ through the Hessian-vector products. Every
result in Sections~\ref{sec: inactivity} and~\ref{sec: cost} is computed on this set.

The collection is heterogeneous by construction and that is the point of using it. It contains
badly scaled problems, problems with many local minimisers, and problems on which every
configuration fails, so a solve rate measured on it is a statement about coverage rather than
about difficulty. Two configurations that solve the same number of problems need not solve the
same problems, and Section~\ref{subsec: like for like} reports the pairs that differ in one
coordinate rather than the ranking alone.

What the collection cannot supply is the reason the designed problems exist. The solutions are
unknown, so $\lambda^*_{\min}$ is not computable, the limit point cannot be classified, and no
threshold can be located. The activity of the trust-region constraint is the one instrument that
survives the passage from the designed problems to CUTEst, because it is read from the run
record and needs no knowledge of the solution.

\section{Experimental setup}
\label{sec: setup}

Section~\ref{sec: package} fixes the iteration. What remains is everything held constant around
it, the subproblem solvers and model Hessians the campaign was run with, when a run stops, at
which constants, and how two runs are compared.
\subsection{The subproblem solvers}
\label{subsec: subsolvers}

The subproblem
\begin{equation}
    \min_{\norm{s} \leq \Delta} \; q(s) \defeq g^\top s + \tfrac12 s^\top H s
    \label{eqn: subproblem}
\end{equation}
is nonconvex whenever $H$ is indefinite, and it nonetheless admits necessary \emph{and}
sufficient conditions for a global solution.

\begin{theorem}[Global optimality, \citealp{Gay81, Sorensen82, MoreSorensen1983}]
\label{thm: ms conditions}
The vector $s^\star$ solves~\eqref{eqn: subproblem} globally if and only if there is
$\lambda \geq 0$ with
\begin{equation}
    (H + \lambda I)\, s^\star = -g,
    \qquad H + \lambda I \succeq 0,
    \qquad \lambda \bigl(\Delta - \norm{s^\star}\bigr) = 0 .
    \label{eqn: ms conditions}
\end{equation}
\end{theorem}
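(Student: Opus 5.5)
The plan is to prove sufficiency by a direct algebraic identity and necessity by combining the first- and second-order KKT conditions with a separate argument for the boundary case. Throughout I take $\norm{\cdot}$ to be the Euclidean norm, as the theorem requires.

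\textbf{Sufficiency.} Suppose $s^\star$ and $\lambda \geq 0$ satisfy~\eqref{eqn: ms conditions}. For any $s$ with $\norm{s} \leq \Delta$, write $d = s - s^\star$ and use $g = -(H+\lambda I)s^\star$. Expanding gives
\[
q(s) - q(s^\star) = \tfrac12 d^\top (H+\lambda I) d + \tfrac{\lambda}{2}\bigl(\norm{s^\star}^2 - \norm{s}^2\bigr).
\]
The first term is nonnegative because $H+\lambda I \succeq 0$. For the second term there are two cases. If $\lambda = 0$ it vanishes. If $\lambda > 0$, complementarity gives $\norm{s^\star} = \Delta \geq \norm{s}$, so it is nonnegative. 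Hence $q(s) \geq q(s^\star)$ for every feasible $s$, and $s^\star$ is a global solution.

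\textbf{Necessity, interior case.} Suppose $\norm{s^\star} < \Delta$. Then $s^\star$ is an unconstrained local minimiser of $q$. The first-order condition gives $Hs^\star = -g$ and the second-order condition gives $H \succeq 0$. Taking $\lambda = 0$ satisfies all three conditions.

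\textbf{Necessity, boundary case.} Suppose $\norm{s^\star} = \Delta$. The constraint $\tfrac12\norm{s}^2 \leq \tfrac12\Delta^2$ has gradient $s^\star \neq 0$ at $s^\star$, so the LICQ holds. The KKT theorem then supplies $\lambda \geq 0$ with $(H+\lambda I)s^\star = -g$, and complementarity holds trivially. What remains is $H+\lambda I \succeq 0$ on all of $\reals^n$, not merely on the tangent space. I expect this to be the main obstacle, and I would handle it with a global comparison on the sphere rather than local second-order theory. Take any $s$ with $\norm{s} = \Delta$. The identity above, with the second term now zero, together with global optimality gives
\[
0 \leq q(s) - q(s^\star) = \tfrac12 (s-s^\star)^\top (H+\lambda I)(s-s^\star).
\]
So it suffices to show that the directions $s - s^\star$, for $s$ on the sphere, cover every direction up to positive scaling.
\begin{itemize}
\item If $w^\top s^\star \neq 0$, take $s = s^\star - 2\bigl(w^\top s^\star/\norm{w}^2\bigr) w$. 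This reflection of $s^\star$ across the hyperplane orthogonal to $w$ lies on the sphere, and $s - s^\star$ is a nonzero multiple of $w$. Hence $w^\top (H+\lambda I) w \geq 0$.
\item If $w^\top s^\star = 0$, approximate $w$ by vectors $w_t = w + t s^\star$, which satisfy $w_t^\top s^\star \neq 0$ for small $t > 0$. The previous case applies to each $w_t$, and continuity of the quadratic form as $t \to 0$ gives $w^\top(H+\lambda I)w \geq 0$.
\end{itemize}
This establishes $H + \lambda I \succeq 0$ and completes the boundary case, and with it the theorem.
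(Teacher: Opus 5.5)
The paper gives no proof of this theorem; it quotes it from Gay, Sorensen and Mor\'e--Sorensen, so your argument has to stand on its own. In substance it does. The identity, the interior case, the KKT multiplier under LICQ (which delivers $\lambda \geq 0$ directly) and the reflection-plus-closure argument for $H + \lambda I \succeq 0$ are all correct.

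The gap is at the end of the sufficiency half. When $\lambda = 0$ the three conditions reduce to $Hs^\star = -g$ and $H \succeq 0$. These make $s^\star$ a global \emph{unconstrained} minimiser of $q$, but they say nothing about $\norm{s^\star} \leq \Delta$; complementarity forces feasibility only when $\lambda > 0$. Your inequality $q(s) \geq q(s^\star)$ for every feasible $s$ is right. Concluding that $s^\star$ solves~\eqref{eqn: subproblem}, however, needs $s^\star$ itself to be feasible, which you never establish and cannot. In one dimension take $H = 1$, $g = -2$, $\Delta = 1$, $s^\star = 2$ and $\lambda = 0$. All three conditions hold, yet $s^\star$ lies outside the region; the actual solution is $s = 1$, with $\lambda = 1$. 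The statement as printed is therefore false in the \emph{if} direction. The fix is the one the standard formulations make: add $\norm{s^\star} \leq \Delta$ to the hypotheses. With it, your sufficiency proof is complete.

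The necessity half has a matching hidden hypothesis. In the boundary case you use $s^\star \neq 0$ twice, once for LICQ and once for $w_t^\top s^\star = t\norm{s^\star}^2 \neq 0$, and that is exactly $\Delta > 0$. The condition is not cosmetic. With $\Delta = 0$ and $g \neq 0$, the only feasible point $s^\star = 0$ is the global solution, yet $(H + \lambda I)s^\star = -g$ has no solution for any $\lambda$. State $\Delta > 0$ as a standing assumption, as the trust-region setting does, and say that this is where $s^\star \neq 0$ comes from.
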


The last equality is complementarity. Either $\lambda = 0$ and the constraint is inactive, or
$\norm{s^\star} = \Delta$. Sufficiency is what makes~\eqref{eqn: ms conditions} usable, a
nonconvex problem rarely admitting a checkable certificate of global optimality.

\texttt{SteihaugCG}, the truncated conjugate-gradient method of \citet{Stei83} and \citet{Toint81}, is stated in
Appendix~\ref{app: frame} (Algorithm~\ref{alg: SteihaugCG}) and will be the default subproblem solver throughout.
It needs only products $Bv$, so it costs nothing to run at $n = 1000$ and it
is the only solver the quasi-Newton models can be driven with as operators. The norm
$\norm{s_j}$ increases strictly along the inner iterations while the curvature stays
positive~\citep[Thm.~7.3.4]{ConnGoulToin00}, so an iterate that leaves the region cannot re-enter
it and stopping at the first exit loses nothing. Truncation delivers the Cauchy decrease
\begin{equation}
    m_k(0) - m_k(s_k) \;\geq\; \tfrac12 \norm{g_k}
        \min\left\{ \frac{\norm{g_k}}{\norm{H_k}},\ \Delta_k \right\} ,
    \label{eqn: cauchy decrease}
\end{equation}
because the first inner iterate is the Cauchy point, and~\eqref{eqn: cauchy decrease} is what
first-order global convergence rests on. The \texttt{EigenPoint} wrapper wraps it and guarantees at
least the decrease available along the leftmost eigenvector,
$\tfrac12\abs{\lambda_{\min}}\Delta^2$, which truncated conjugate gradients alone does not.

Truncation delivers a second guarantee, and this one \PartII{} assumes rather than proves. Its
quadratic model decrease condition
\begin{equation}
    m_k(0) - m_k(s_k) \;\geq\; \mykappa{mqd}\norm{s_k}^2 \;>\; 0
    \label{eqn: mqd}
\end{equation}
is stated for an exact minimiser of the model, and \PartII{} leaves the inexact case to this
\thisdoc{}. Appendix~\ref{app: mqd} settles it. Subspace optimality survives truncation as an
inequality of the right sign, $\langle H_ks_k + g_k,\, s_k\rangle \leq 0$, and that inequality
gives $m_k(0) - m_k(s_k) \geq \tfrac12 s_k^\top H_ks_k$ for any symmetric $H_k$ and whichever of
the three exit tests of Algorithm~\ref{alg: SteihaugCG} fires. What does not survive is the
constant. Reaching $\mykappa{mqd}\norm{s_k}^2$ from $\tfrac12 s_k^\top H_ks_k$ needs a uniform
$\lambda_{\min}(H_k) \geq \bar\delta > 0$, so~\eqref{eqn: mqd} is a property of the model and not
of the solver, and it fails on the models whose indefiniteness is the information being used.

\begin{remark}[Degeneration to the Cauchy point]
\label{rem: cauchy degeneration}
The first inner direction is $-g_k$. If the iterate leaves the region on the \emph{first} inner
iteration the returned step is the Cauchy point, and the model Hessian has had no influence on
the direction, though it still sets the length through $g_k^\top B_k g_k$. When $\Delta_k$ is
small, which is what a small cap in $\Rgrad$ or a small $\zeta$ in $\Rdfo$ produces, this happens
at every iteration and the method is gradient descent in disguise. No first-order diagnostic
reveals it. The driver therefore records the number of inner iterations and $\cos(s_k, -g_k)$,
whose signature here is $\texttt{cg\_iters} = 1$ together with $\cos = 1$ and
$\texttt{active} = \textnormal{true}$.
\end{remark}

The package offers the possibility to use several solvers for the inner problem:
\texttt{ExactMS}  \citep{MoreSorensen1983}, \texttt{KrylovCG}, \texttt{KrylovCR} and
\texttt{KrylovCGLanczos} delegate to \textsf{Krylov.jl} and need only the product $Bv$;
\texttt{KrylovCG} assumes $H \succ 0$ and so cannot be used where indefiniteness is the
information being sought.

\subsection{The model Hessians}
\label{subsec: models}

The driver never forms a matrix. It uses only the action $H v$, which is what lets the
quasi-Newton models be supplied as operators and what keeps $n = 1000$ affordable.
Table~\ref{tab: models} lists the eight models.

\begin{table}[htbp]
    \centering
    \small
    \begin{tabular}{@{}llp{0.50\textwidth}@{}}
        \toprule
        Model & Indefinite & Role \\
        \midrule
        \texttt{ExactHessian}  & yes & $H_k = \nabla^2 f(x_k)$. Does not satisfy the secant
            equation, which closes the route of \citet{FanPan16} to
            $\widetilde\rho \to 1$. \\
        \texttt{LBFGSModel}    & no  & Positive definite by construction, so it never reports
            negative curvature~\citep{NoceWrig06}. \\
        \texttt{SR1Model}      & yes & Can represent negative curvature, which is the reason to
            carry it~\citep{ConnGouldToint1991}. \\
        \texttt{ScaledIdentity} & no & $H_k = cI$. No curvature at all, so the step is
            $-\min\{\norm{g_k}/c,\, \Delta_k\}\,g_k/\norm{g_k}$ and any difference between rules
            is the rule. \\
        \texttt{SPDTarget}     & no  & Positive definite with its unconstrained minimiser pinned
            to a chosen point. Section~\ref{subsec: spd}. \\
        \bottomrule
    \end{tabular}
    \caption{The eight model Hessians. Limited-memory models run at memory $5$ throughout. The
    grid of Section~\ref{subsec: interaction} uses the first four. \texttt{SPDTarget} is a
    construction for P3 and appears in no aggregate campaign, and the last three apply only to
    the structured problem classes.}
    \label{tab: models}
\end{table}

    \subsubsection{A positive definite model with a chosen minimiser}
\label{subsec: spd}

Claim~\ref{claim: bad limits} asserts that a configuration can satisfy every hypothesis of
\PartI{}, keep every $\rho_k$ successful and a well-behaved radius, and still converge to a point
that is not a minimiser. Exhibiting that on P3 by the invariant line of
Section~\ref{subsec: P3} leaves one objection open, since a subproblem solver that inspects the
curvature off the line would escape. \texttt{SPDTarget} removes the objection by removing the
negative curvature from the model altogether.

The construction is two-dimensional, which is the dimension of P3. Write $d = x^\star - x$ for
the displacement to the chosen target, $u = d/\norm{d}$ and $v = u^\perp$, and set
\begin{equation}
    H = a\,uu^\top + b\,\bigl(uv^\top + vu^\top\bigr) + c\,vv^\top ,
    \qquad
    a = -\frac{g^\top u}{\norm{d}} ,
    \qquad
    b = -\frac{g^\top v}{\norm{d}} ,
    \qquad
    c = \frac{b^2}{a} + \lambda_\perp ,
    \label{eqn: spd target}
\end{equation}
with $\lambda_\perp > 0$. Two facts make~\eqref{eqn: spd target} the right instrument.

The unconstrained minimiser of the model is $x^\star$ for \emph{every} $c$. Since $u$ and $v$ are
orthonormal and $d = \norm{d}u$, the matrix satisfies $Hd = \norm{d}(a u + b v) = -g$, which is
the Newton equation for the step $d$.

The matrix is positive definite exactly when $a > 0$, the choice of $c$ then giving
$ac - b^2 = a\lambda_\perp > 0$. In terms of the point,
\begin{equation}
    a > 0
    \iff
    \varphi(x) \defeq g(x)^\top\bigl(x^\star - x\bigr) < 0 ,
    \label{eqn: phi target}
\end{equation}
so the target has to lie downhill. That condition is not an artefact of this particular family.
Any positive definite $H$ with $Hd = -g$ has $d^\top H d = -g^\top d > 0$, so no positive
definite model whatever has its minimiser at $x^\star$ where $\varphi(x) \geq 0$. A failure of
the construction is a statement about the point, and the package reports it as such.

For P3 the target is the nonsingular saddle $x^\star = (\tfrac34, 0)$, and the condition is
available in closed form. On the invariant line,
\begin{equation}
    \varphi(x,0) = \bigl(4x^3 - 3x^2\bigr)\Bigl(\tfrac34 - x\Bigr)
                 = -\tfrac14\,x^2 (4x-3)^2 \;\leq\; 0 ,
    \label{eqn: p3 phi}
\end{equation}
with equality only at the two critical points $x = 0$ and $x = \tfrac34$ themselves. The model
therefore exists at every other point of the line, and it exists on a neighbourhood of it. Runs
start at $x_0 = (-0.5,\, 0.6)$, which is off the line and has $\varphi(x_0) = -2.277 < 0$, and
converge to the saddle under every rule and every subproblem solver. Every $\rho_k$ is
successful, $\norm{g_k} \to 0$, the radius is well behaved, and the limit is not a minimiser. The
failure belongs to the model, and no radius rule can repair it.

\subsection{Stopping tests and run outcomes}
\label{subsec: stopping}

A run ends in exactly one of seven states. There are two tolerances, and the second one changes
what the first one means.

\begin{itemize}[leftmargin=1.6em, itemsep=2pt]
    \item \texttt{first\_order}, $\norm{g_k} \leq \epsilon$ with the curvature test disabled,
          $\epsilon_H \leq 0$.
    \item \texttt{second\_order}, $\norm{g_k} \leq \epsilon$ and
          $\lambda_{\min}(H_k) \geq -\epsilon_H$ with $\epsilon_H > 0$.
    \item \texttt{max\_iter}, the iteration budget $k_{\max}$ is exhausted.
    \item \texttt{max\_time}, the wall-clock budget $t_{\max}$ is exhausted.
    \item \texttt{stalled}, $\abs{\text{ared}}$ has fallen to the rounding level of $f$ and the
          step is numerically nil. The ratio is then noise, every step is rejected and the radius
          contracts for ever. This is a real limit of the criticality-anchored rules in
          floating-point arithmetic rather than an implementation defect, so it is reported
          separately instead of being counted as an ordinary failure.
    \item \texttt{exception}, the model could not be built at the current iterate. For
          \texttt{SPDTarget} this is~\eqref{eqn: phi target} failing.
    \item \texttt{user}, a callback stopped the run.
\end{itemize}

A run counts as solved when it ends in \texttt{first\_order} \emph{or} in \texttt{second\_order}.
The disjunction is not a convenience. \texttt{second\_order} is the stronger outcome, not a
different kind of failure, and testing for \texttt{first\_order} alone marks every successful
$\tau$-anchored run as unsolved, so the configurations that do best appear in the profiles with a
reliability of zero. The remaining five states contribute $+\infty$ to the cost matrix of
Section~\ref{subsec: methodology}. A solver that stops early with a large gradient has not solved
the problem whatever its iteration count.

One convention needs stating because two reporting paths disagree about it. A problem already
critical at $x_0$ returns \texttt{first\_order} after zero iterations, and a cost of zero is
treated as a failure by the profile and as a success by the solve-rate table. Such problems are
screened out of the test set before the cost matrix is built.

\subsection{Settings}
\label{subsec: settings}

Every constant used anywhere in this \thisdoc{} is fixed in Table~\ref{tab: settings}. Parameters are
identical across mechanisms, or not tuned at all. Tuning each rule separately would measure the
tuning effort rather than the mechanism, which is the first recommendation of
\citet{BeiranvandHareLucet17}.

\begin{table}[htbp]
    \centering
    \small
    \begin{tabular}{@{}lllp{0.24\textwidth}@{}}
        \toprule
        & Symbol & Value & Applies to \\
        \midrule
        acceptance threshold      & $\eta$                      & $0.1$ & every rule \\
        scaling thresholds        & $\eta_1, \eta_2$            & $0.1$, $0.9$ & every rule \\
        retrospective thresholds  & $\widetilde\eta_1, \widetilde\eta_2$ & $0.05$, $0.9$
                                  & $\Rrtr$, $\Rgrtr$ \\
        branch factors            & $\gone, \gtwo, \gthree$    & $0.25$, $0.5$, $2$
                                  & the fixed-factor rules \\
        adaptive factor constants & $\gone, \gtwo, \gthree, M, \lambda_i$
                                  & $0.01$, $0.9$, $1.5$, $5$, $5$
                                  & $\RAdaptstep$, $\RAdaptiveGrad$ \\
        \midrule
        initial radius            & $\Delta_0$                  & $1$ & every rule \\
        radius floor, cap         & $\Delta_{\min}, \Delta_{\max}$ & $0.0$, $+\infty$
                                  & every rule \\
        criticality parameter     & $\zeta$                     & $100$ & $\Rdfo$ \\
        initial multiplier   &      $\mu_0$    & $1$ & $\Rgrad$ \\
        \midrule
        forcing constants, inner cap & $\chi, \theta, j_{\max}$ & $0.1$, $0.5$, $1000$
                                  & \texttt{SteihaugCG} \\
        quasi-Newton memory       & n/a                          & $5$
                                  & \texttt{LBFGSModel}, \texttt{SR1Model} \\
        transverse eigenvalue     & $\lambda_\perp$             & $1$ & \texttt{SPDTarget} \\
        \midrule
        first-order tolerance     & $\epsilon$                  & $10^{-5}$
                                  & every campaign but one \\
        curvature tolerance       & $\epsilon_H$                & $10^{-6}$
                                  & the second-order campaigns \\
        iteration and time budget & $k_{\max}, t_{\max}$        & $10^{4}$, $120$\,s
                                  & every campaign \\
        \bottomrule
    \end{tabular}
    \caption{Every constant, and where it applies. The adaptive rules are the one place where
    the branch factors are not shared, and Section~\ref{subsec: like for like} explains why matching
    them would not repair the comparison. The tolerance is $10^{-10}$ in
    Section~\ref{subsec: order rules} alone, where the asymptotic regime is the quantity being
    measured and a run stopped at $10^{-5}$ may never enter it. Setting $\epsilon_H \leq 0$
    disables the curvature test, and every run then ends in \texttt{first\_order}.}
    \label{tab: settings}
\end{table}

No campaign imposes a radius floor. $\Delta_{\min} = 0$ is a driver parameter and takes that value
under every rule in every campaign reported here, which is what makes the comparison one at fixed
constants. The choice is not neutral for $\Rstep$. On an accepted step that rule sets
$\Delta_{k+1} = \gthree\norm{s_k}$, so a short step gives a small radius and a shorter step still.
Near convergence, or whenever truncated conjugate gradients stops at its first inner iteration,
the sequence can collapse towards zero, after which every step is nil, $\rho_k$ is \texttt{NaN}
and the loop runs to $k_{\max}$. A floor would convert that hang into slowness. Running without
one leaves the rule unprotected, and the iteration counts $\Rstep$ records below are its own.

\paragraph{Where the runs were made.}
Every campaign ran on one machine, an Intel Core~i9-13900 carrying $24$ cores and $32$ threads,
eight of the cores being performance cores and sixteen efficiency cores, with $36$\,MB of shared
last-level cache and $128$\,GB of memory, under Linux and with Julia~1.12.1 on a single thread.
Only $t_{\max}$ makes the machine matter. A run is unsolved when it exhausts $10^{4}$ iterations
\emph{or} $120$ seconds of wall clock, so the second half of that test is a property of this
machine rather than of the mechanism, and on a processor carrying two kinds of core it is a
property of which kind the run was scheduled on. Every count of solved problems below inherits
that dependence. The iteration and evaluation counts do not, and they are what every comparison
in this \thisdoc{} reads, time being reported only where the comparison is between subproblem solvers.

\subsection{Comparing runs}
\label{subsec: methodology}

A run matrix executes every configuration on every problem, resetting the rule, the model, the
subproblem solver and the oracle between runs, and records the cost of each solved run. The cost
is one of iterations, objective evaluations, gradient evaluations, Hessian-vector products or
wall-clock time, and the choice is stated with every figure. Time is reported only where the
comparison is between subproblem solvers, being the one measure that depends on the
implementation as much as on the mechanism. From that matrix the package builds the profiles
below and exports them as \textsc{pgfplots} coordinates, so that the figures in this \thisdoc{} are
generated from the matrix that produced the tables.

\paragraph{Performance profiles.}
For a set $\calP$ of problems and a set $\calS$ of configurations, let $t_{p,s}$ be the cost of
configuration $s$ on problem $p$. The performance ratio is
\begin{equation}
    r_{p,s} = \frac{t_{p,s}}{\min\{t_{p,\sigma} : \sigma \in \calS\}} ,
    \label{eqn: performance ratio}
\end{equation}
set to $+\infty$ when $s$ does not end in \texttt{first\_order} or \texttt{second\_order} on $p$,
and the profile of \citet{DolanMore2002} is the empirical distribution
\begin{equation}
    \pi_s(\tau) = \frac{1}{\abs{\calP}}
        \abs{\{p \in \calP : r_{p,s} \leq \tau \}} ,
    \label{eqn: performance profile}
\end{equation}
the fraction of problems on which $s$ is within a factor $\tau$ of the best configuration in the
comparison.

Two numbers are read off a profile and they answer different questions. The intercept $\pi_s(1)$
is \emph{efficiency}, the fraction of problems on which $s$ is fastest. The right-hand asymptote
is \emph{reliability}, the fraction $s$ solves at all. The two move independently, and the
central observation of Section~\ref{sec: cost} is that a parameter choice can leave one untouched
while collapsing the other.

\paragraph{Data profiles.}
A performance profile normalises by the best configuration, which makes it a statement about
relative cost and not about absolute effort. The data profile of \citet{MoreWild2009} normalises
by the problem dimension instead,
\begin{equation}
    d_s(\alpha) = \frac{1}{\abs{\calP}}
        \left|\left\{ p \in \calP : \frac{t_{p,s}}{n_p + 1} \leq \alpha \right\}\right| ,
    \label{eqn: data profile}
\end{equation}
so $\alpha$ is a budget in units of one simplex gradient and $d_s(\alpha)$ answers what fraction
of problems $s$ solves within that budget. Not being normalised by the best configuration, it
does not inherit the set dependence noted next.

\paragraph{What a profile depends on.}
A performance ratio is normalised by the best configuration in $\calS$, so a profile is a
statement about a comparison and not about a configuration on its own. Adding or removing a
configuration changes every curve, and profiles drawn from different $\calS$ are not comparable,
which is why~\citet{GouldScott16} caution against reading a profile as a ranking once more than
two configurations are compared. Every profile below therefore names the set it is drawn from,
and no curve is carried from one figure to another. Reliability survives that choice, since a
configuration either solves a problem or it does not. Efficiency does not, and every efficiency
quoted below is an efficiency within the comparison its table names.

\section{Experiments}
\label{sec: cost}

\subsection{The \texorpdfstring{$\zeta$}{zeta} sweep}
\label{subsec: zeta sweep}

$\Rdfo$ ran at six values of $\zeta$ from $10^{-3}$ to $10^{2}$, one per decade, on every
unconstrained CUTEst problem with $2 \leq n \leq 1000$, which gives $196$ problems. Every other
component was held fixed. The model is the exact Hessian, the subproblem solver is truncated
conjugate gradients with $\chi = 0.1$, $\theta = 0.5$ and at most $1000$ inner iterations, the
criticality measure is the first-order one, $\omega_k = \norm{g_k}$, and the constants are
$\gone = 0.25$, $\gtwo = 0.5$, $\gthree = 2$, $\eta = \eta_1 = 0.1$, $\eta_2 = 0.9$ and
$\Delta_0 = 1$ with no radius floor and no radius cap. A run is solved when it reaches
$\norm{g_k} \leq 10^{-5}$, the second-order stopping test being disabled, and unsolved when it
exhausts $10^{4}$ iterations or $120$ seconds, stalls, or raises an exception.
Section~\ref{sec: threshold} showed that $\Rdfo$ propagates the radius multiplicatively, so a
generous $\Delta_0$ can rescue an otherwise fatal $\zeta$. Fixing $\Delta_0 = 1$ across the sweep
is what makes Table~\ref{tab: zeta sweep} a statement about $\zeta$.

\begin{table}[htbp]
    \centering
    \small
    \begin{tabular}{@{}rrrrrr@{}}
        \toprule
        $\zeta$ & solved & efficiency $\pi(1)$ & median iterations & mean iterations
          & tail activity \\
        \midrule
        $0.001$ & $68/196 = 0.347$  & $0.194$ & $19.5$ & $1297.1$ & $0.410$ \\
        $0.01$  & $96/196 = 0.490$  & $0.265$ & $21.0$ & $607.3$  & $0.376$ \\
        $0.1$   & $118/196 = 0.602$ & $0.403$ & $18.0$ & $545.7$  & $0.250$ \\
        $1$     & $145/196 = 0.740$ & $0.434$ & $19.0$ & $456.9$  & $0.254$ \\
        $10$    & $153/196 = 0.781$ & $0.459$ & $19.0$ & $238.6$  & $0.198$ \\
        $100$   & $162/196 = 0.827$ & $0.607$ & $19.5$ & $237.7$  & $0.151$ \\
        \bottomrule
    \end{tabular}
    \caption{$\Rdfo$ on the $196$-problem CUTEst subset, against $\zeta$. Efficiency is the
    intercept $\pi(1)$ of the iteration profile of Figure~\ref{fig: zeta profile}. The two
    iteration columns are taken over the runs each configuration solves, and those subsets
    differ from row to row. Tail activity is the fraction of the last tenth of the iterations
    on which the trust-region constraint bound.}
    \label{tab: zeta sweep}
\end{table}

\begin{figure}[htbp]
    \centering
    \includegraphics[width=0.78\textwidth]{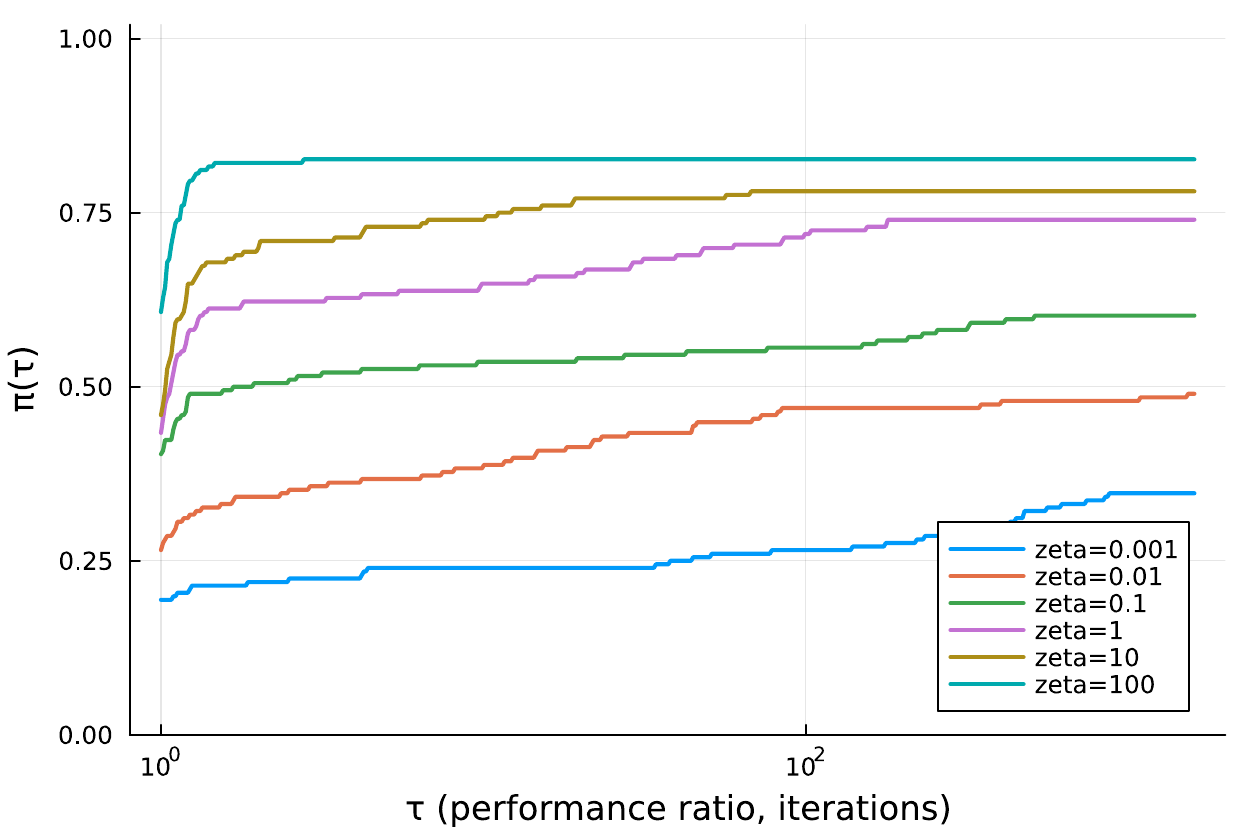}
    \caption{$\Rdfo$ against $\zeta$ on the $196$-problem subset: the iteration performance
        profile, as the run matrix exported it. The six curves are totally ordered and none
        crosses another, so raising $\zeta$ never costs a problem at any $\tau$. The legend
        carries the configuration names of the run record, \texttt{zeta=}$\zeta$.}
    \label{fig: zeta profile}
\end{figure}

\begin{figure}[htbp]
    \centering
    \begin{tikzpicture}
        \begin{semilogxaxis}[rate axis, width=0.76\textwidth,
            xlabel={$\zeta$}, xmin=5e-4, xmax=200]
            \addplot[rel line] coordinates
                {(0.001,0.347) (0.01,0.490) (0.1,0.602) (1,0.740) (10,0.781) (100,0.827)};
            \addplot[eff line] coordinates
                {(0.001,0.194) (0.01,0.265) (0.1,0.403) (1,0.434) (10,0.459) (100,0.607)};
            \legend{reliability, efficiency $\pi(1)$}
            \node[anchor=south, font=\footnotesize] at (axis cs:0.05,0.66) {reliability};
            \node[anchor=north, font=\footnotesize] at (axis cs:1,0.40)
                {efficiency $\pi(1)$};
        \end{semilogxaxis}
    \end{tikzpicture}
    \caption{The two numbers the profile of Figure~\ref{fig: zeta profile} is read for, taken
        from Table~\ref{tab: zeta sweep}. Reliability is the right-hand asymptote of the profile
        and efficiency its intercept $\pi(1)$. The two rise together, which
        Section~\ref{subsec: zeta sweep} reads as one effect and not two.}
    \label{fig: zeta rates}
\end{figure}

The reading is asymmetric and it is the point of the experiment. Reliability rises monotonically
from $68$ to $162$ problems of $196$, a gain of $94$ problems over five decades of $\zeta$. The
median iteration count on the problems each configuration solves does not move, staying between
$18.0$ and $21.0$ over the same range. A small $\zeta$ therefore leaves the easy problems alone
and fails on the rest. Tail activity falls monotonically from $0.410$ to $0.151$, apart from a
reversal of $0.004$ between $\zeta = 0.1$ and $\zeta = 1$ that is smaller than the resolution of
the measure. That fall is the mechanism of Claim~\ref{claim: zeta} showing through the
aggregate. Below the threshold the gate $\Delta_k \leq \zeta \omega_k$ opens rarely, the radius
contracts on the intermediate branch at almost every successful iteration, and the constraint
never stops binding.

The flat median needs a caution, because two other columns contradict the reading it invites.
The mean iteration count falls by a factor of $5.5$, from $1297.1$ to $237.7$, and the profile
of Figure~\ref{fig: zeta profile} makes the same point without an average. Writing
$\tau_{0.9}$ for the factor at which a configuration reaches nine tenths of its own reliability,
$\tau_{0.9}$ falls from $766$ at $\zeta = 0.001$ to $1.2$ at $\zeta = 100$. A small $\zeta$ is
therefore slow, on a tail of problems that the median cannot see. The median is taken over the
solved subset, and that subset grows from $68$ problems to $162$ as $\zeta$ rises, so the rows
of Table~\ref{tab: zeta sweep} do not describe the same problems. The honest statement is that
$\zeta$ costs reliability first and cost second, and that no column of this table is a
like-for-like comparison of speed.

Efficiency rises with reliability rather than independently of it. A configuration that fails on
a problem is never fastest on it, so $\pi(1)$ is bounded above by reliability and the two are not
free to move separately. Dividing one by the other gives the fraction of \emph{solved} problems
on which a configuration is fastest, which takes the values $0.56$, $0.54$, $0.67$, $0.59$,
$0.59$ and $0.74$ in the order of the table. That ratio carries no trend across the first five
values and rises only at $\zeta = 100$. Almost all of the threefold gain in $\pi(1)$ is therefore
the gain in reliability, and the two curves of Figure~\ref{fig: zeta rates} rise together rather
than apart. What is flat in $\zeta$ is the cost of a solved run, in the median column of
Table~\ref{tab: zeta sweep}, and not the efficiency intercept.

The transition is smooth here and sharp in Table~\ref{tab: sinc numerics}. That is expected
rather than contradictory. Claim~\ref{claim: zeta} asserts a threshold that depends on the
solution, and averaging over $196$ problems with $196$ different thresholds smears any one of
them out. The aggregate regime can show the mechanism and cannot settle
Claim~\ref{claim: zeta}. Nothing here locates a threshold in $\zeta$, and the grid of one point
per decade could not resolve one if it existed.

\subsection{The \texorpdfstring{$\overline{\mu}$}{mu-bar} sweep}
\label{subsec: mu sweep}

$\Rgrad$ ran at six values of the cap from $10^{-3}$ to $10^{2}$ and once uncapped, on the same
$196$ problems and under the settings of Section~\ref{subsec: zeta sweep}. The starting
multiplier is $\mu_0 = 0.1$ for the uncapped comparator and for every cap that admits it, so the
four configurations at $\overline{\mu} \in \{0.1, 1, 10, 100\}$ and the uncapped one differ in the
cap and in nothing else. Update~\ref{update: rgrad generic} requires $\mu_0 \leq \overline{\mu}$,
so the two smallest caps cannot start there and keep $\mu_0 = \overline{\mu}$. Those two rows
move the initial multiplier with the cap and the other five do not.

\begin{table}[htbp]
    \centering
    \small
    \begin{tabular}{@{}lrrrrrr@{}}
        \toprule
        $\overline{\mu}$ & $\mu_0$ & solved & efficiency $\pi(1)$ & median iterations
          & mean iterations & tail activity \\
        \midrule
        $0.001$  & $0.001$ & $53/196 = 0.270$  & $0.066$ & $1055.0$ & $2206.4$ & $0.736$ \\
        $0.01$   & $0.01$  & $80/196 = 0.408$  & $0.077$ & $235.5$  & $806.1$  & $0.746$ \\
        \midrule
        $0.1$    & $0.1$   & $102/196 = 0.520$ & $0.153$ & $45.5$   & $1045.1$ & $0.582$ \\
        $1$      & $0.1$   & $122/196 = 0.622$ & $0.342$ & $29.0$   & $533.1$  & $0.424$ \\
        $10$     & $0.1$   & $139/196 = 0.709$ & $0.413$ & $33.0$   & $526.4$  & $0.414$ \\
        $100$    & $0.1$   & $150/196 = 0.765$ & $0.485$ & $25.5$   & $369.4$  & $0.350$ \\
        uncapped & $0.1$   & $161/196 = 0.821$ & $0.740$ & $26.0$   & $176.1$  & $0.299$ \\
        \bottomrule
    \end{tabular}
    \caption{$\Rgrad$ on the $196$-problem CUTEst subset, against the cap. The five rows below
    the rule share $\mu_0 = 0.1$ and vary only in $\overline{\mu}$, the last of them being
    $\overline{\mu} = +\infty$. The two rows above it cannot start at $0.1$ and are the only ones
    that move two parameters at once. The remaining columns are those of
    Table~\ref{tab: zeta sweep} and carry the same caution about the solved subset.}
    \label{tab: mu sweep}
\end{table}

\begin{figure}[htbp]
    \centering
    \includegraphics[width=0.78\textwidth]{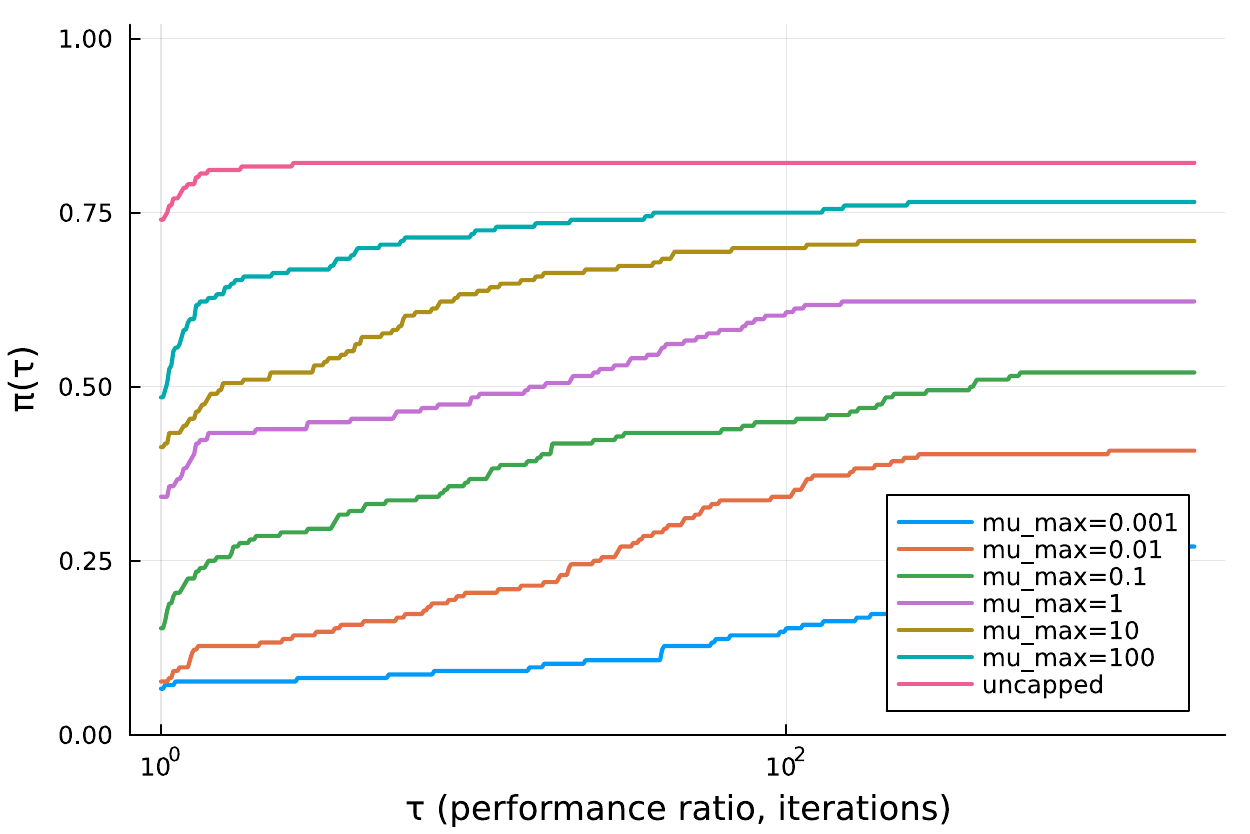}
    \caption{$\Rgrad$ against the cap on the $196$-problem subset: the iteration performance
        profile, as the run matrix exported it, and read as Figure~\ref{fig: zeta profile} is
        read. The seven curves are totally ordered and none crosses another. The legend carries
        the configuration names of the run record, \texttt{mu\_max=}$\overline{\mu}$ and
        \texttt{uncapped} for $\overline{\mu} = +\infty$.}
    \label{fig: mu profile}
\end{figure}

\begin{figure}[htbp]
    \centering
    \begin{tikzpicture}
        \begin{semilogxaxis}[rate axis, width=0.76\textwidth, xlabel={$\overline{\mu}$},
            xmin=4e-4, xmax=3000,
            xtick={0.001,0.01,0.1,1,10,100,1000},
            xticklabels={$10^{-3}$,$10^{-2}$,$10^{-1}$,$1$,$10$,$10^{2}$,$\infty$}]
            \addplot[rel line] coordinates
                {(0.001,0.270) (0.01,0.408) (0.1,0.520) (1,0.622) (10,0.709) (100,0.765)};
            \addplot[eff line] coordinates
                {(0.001,0.066) (0.01,0.077) (0.1,0.153) (1,0.342) (10,0.413) (100,0.485)};
            \legend{reliability, efficiency $\pi(1)$}
            \addplot[rel line, only marks, forget plot] coordinates {(1000,0.821)};
            \addplot[eff line, only marks, forget plot] coordinates {(1000,0.740)};
            \draw[ref rule] (axis cs:316,0) -- (axis cs:316,1);
            \node[anchor=south, font=\footnotesize] at (axis cs:0.05,0.57) {reliability};
            \node[anchor=north, font=\footnotesize] at (axis cs:1,0.30)
                {efficiency $\pi(1)$};
        \end{semilogxaxis}
    \end{tikzpicture}
    \caption{Reliability and efficiency of $\Rgrad$ against the cap, taken from
        Table~\ref{tab: mu sweep} and read as Figure~\ref{fig: zeta rates} is read. The uncapped
        configuration has $\overline{\mu} = +\infty$, so it sits beyond the dashed rule as two
        isolated marks rather than on the curve. Both curves rise, and unlike
        Figure~\ref{fig: zeta rates} the lower one rises faster, so the gap between them closes
        as the cap is relaxed.}
    \label{fig: mu rates}
\end{figure}

The cap behaves differently from $\zeta$, and the difference is in the column that does not move
in Table~\ref{tab: zeta sweep}. Reliability rises from $53$ to $150$ problems and the uncapped
variant solves $161$, which is the shape of the $\zeta$ sweep. The median iteration count is not
flat. It falls by a factor of forty, from $1055.0$ to $26.0$, so a small cap costs efficiency as
well as reliability. Dividing $\pi(1)$ by reliability gives $0.25$, $0.19$, $0.29$, $0.55$,
$0.58$, $0.63$ and $0.90$ down the table, which rises where the same ratio in the $\zeta$ sweep
carries no trend. Even among the problems a capped run solves, it is rarely the fastest. A small
$\zeta$ delays the moment the radius is allowed to grow. A small cap forbids it from ever growing
past $\overline{\mu}\norm{g_k}$, and the method pays for that on every problem and not only on
the ones it loses.

\paragraph{The cap is isolated over most of the range.}
At $\mu_0 = 0.1$ throughout, raising the cap from $0.1$ to $1$, $10$, $100$ and then removing it
takes reliability through $102$, $122$, $139$, $150$ and $161$ problems, and efficiency through
$0.153$, $0.342$, $0.413$, $0.485$ and $0.740$, both plotted against the cap in
Figure~\ref{fig: mu rates}. Both are monotone and neither curve of
Figure~\ref{fig: mu profile} crosses another at any $\tau$. That sequence is
Claim~\ref{claim: zeta} in the aggregate regime, at a fixed initial multiplier, and the cap is
worth $59$ problems of $196$ over that range.

An earlier archive ran the same sweep with $\mu_0 = \overline{\mu}$ in every row, and comparing
the two bounds what the initial multiplier is worth. Starting at $0.1$ rather than at the cap
costs $8$ problems at $\overline{\mu} = 1$, $4$ at $10$ and $1$ at $100$. The initial multiplier
therefore matters, and it matters less the more generous the cap, which is what a multiplier free
to climb should do. The two rows at $\overline{\mu} \leq 0.01$ still move both parameters, and on
that evidence the part attributable to $\mu_0$ there is small beside the part attributable to the
cap.

\paragraph{Where the cap binds.}
The driver records, per run, how many iterations the expansion branch was clipped at
$\overline{\mu}$ and the first iteration after which the constraint never binds again.
Table~\ref{tab: mu climb} splits the runs on the first quantity and reports the second.

\begin{table}[htbp]
    \centering
    \small
    \begin{tabular}{@{}lrrrr@{}}
        \toprule
        & \multicolumn{2}{c}{the cap bound}
          & \multicolumn{2}{c}{the region never released} \\
        \cmidrule(lr){2-3}\cmidrule(l){4-5}
        $\overline{\mu}$ & on how many runs & on how many iterations
          & where the cap bound & where it did not \\
        \midrule
        $0.001$  & $179/196$ & $8621$ & $163/179 = 0.91$ & $3/17 = 0.18$   \\
        $0.01$   & $170/196$ & $3291$ & $149/170 = 0.88$ & $8/26 = 0.31$   \\
        $0.1$    & $148/196$ & $2004$ & $120/148 = 0.81$ & $11/48 = 0.23$  \\
        $1$      & $117/196$ & $1189$ & $79/117 = 0.68$  & $17/79 = 0.22$  \\
        $10$     & $85/196$  & $391$  & $57/85 = 0.67$   & $21/111 = 0.19$ \\
        $100$    & $74/196$  & $62.5$ & $34/74 = 0.46$   & $22/122 = 0.18$ \\
        uncapped & $0/196$   & ---    & ---              & $36/196 = 0.18$ \\
        \bottomrule
    \end{tabular}
    \caption{The cap and eventual inactivity, over all $196$ runs of each configuration rather
    than over the solved ones. The cap \emph{bound} on a run when at least one expansion was
    clipped at $\overline{\mu}$. The second column is the median, over those runs alone, of
    the number of iterations per run on which an expansion was clipped, out of the $10^{4}$
    iterations a run is allowed. The region \emph{never released} when the trust-region
    constraint bound at the final iteration, so the inactivity index of
    Section~\ref{subsec: instruments} is undefined. The last two columns split that outcome on
    whether the cap bound, and their denominators are the two parts of the $196$ runs.}
    \label{tab: mu climb}
\end{table}

The two right-hand columns separate cleanly, and they do not separate by row. A run whose cap
never bound fails to release the constraint on between $18$ and $31$ per cent of problems,
whatever $\overline{\mu}$ is, and the uncapped configuration sits at $18$ per cent at the foot
of the same column. A run whose cap bound at least once fails to release it on between $46$ and
$91$ per cent, and that figure rises as $\overline{\mu}$ falls. At $\overline{\mu} = 10^{-3}$
the cap bound on $179$ of the $196$ runs, and across those runs the median number of iterations
carrying a clipped expansion is $8621$ of the $10^{4}$ allowed. Table~\ref{tab: mu theta}
settles what that means for the radius. Across the same $179$ runs the median fraction of
iterations with $\theta_k = \overline{\mu}$ is $1.0000$, so on the median run the multiplier
sits at the cap and the radius is $\overline{\mu}\norm{g_k}$ at every iteration. That fraction
falls to $0.62$ at $\overline{\mu} = 10^{2}$, so the effect belongs to the small caps rather
than to the cap as such. This is the mechanism of Claim~\ref{claim: zeta} measured directly
rather than inferred from the solve rate.

\begin{table}[htbp]
    \centering
    \small
    \begin{tabular}{@{}lrrrr@{}}
        \toprule
        & \multicolumn{3}{c}{fraction of iterations at the cap} & \\
        \cmidrule(lr){2-4}
        $\overline{\mu}$ & median & lower quartile & smallest & runs \\
        \midrule
        $0.001$ & $1.0000$ & $0.9963$ & $0.0138$ & $179$ \\
        $0.01$  & $1.0000$ & $0.9836$ & $0.0173$ & $170$ \\
        $0.1$   & $0.9997$ & $0.9436$ & $0.0088$ & $148$ \\
        $1$     & $0.9839$ & $0.3571$ & $0.0000$ & $117$ \\
        $10$    & $0.9473$ & $0.8293$ & $0.0160$ & $85$  \\
        $100$   & $0.6210$ & $0.2934$ & $0.0168$ & $74$  \\
        \bottomrule
    \end{tabular}
    \caption{How much of each run the multiplier spends at its cap. The ratio
    $\theta_k = \Delta_k/\norm{g_k}$ is the multiplier $\mu_k$ itself under
    $\Rgrad(\omega, \overline{\mu})$, so $\theta_k = \overline{\mu}$ exactly when the multiplier
    sits at the cap, tested to a relative $10^{-9}$. Each run contributes the fraction of its
    iterations on which that holds, and the three columns are the median, the lower quartile and
    the smallest of those fractions over the cap-bound runs of Table~\ref{tab: mu climb}, whose
    number is the last column. Two facts do not fit in the table. The median $\theta_k$ equals
    $\overline{\mu}$ on every row, so the multiplier is at the cap for more than half of the
    median run at every cap tested. At $\overline{\mu} \leq 10^{-2}$ the smallest $\theta_k$ of
    the median run also equals $\overline{\mu}$, so there the multiplier never leaves the cap.
    Table~\ref{tab: mu climb} counts expansions refused, which is a lower bound on the time
    spent at the cap rather than a measure of it, since an iteration that contracts moves
    $\mu_k$ below $\overline{\mu}$ and is never counted. This table measures that time directly.
    The last two columns show that the reading is about the median run and not about every run,
    and the lower quartile is not monotone in $\overline{\mu}$.}
    \label{tab: mu theta}
\end{table}

Two things weaken that reading and neither is repaired by more runs of the same kind. The split
is by outcome and not by design, since a run whose cap binds is a run that wanted a larger
radius, which is a property of the problem as much as of the cap. The correlation is therefore
not a controlled comparison. And a run that exhausts the iteration budget is counted as never
inactive by construction, so the left column inherits the failure counts of
Table~\ref{tab: mu sweep}. What survives both objections is the constancy of the right-hand
column across five decades of $\overline{\mu}$, which no property of the problems explains.

\paragraph{What the collection says about $\lambda^*_{\min}$.}
Capped $\Rgrad$ is trapped whenever $\overline{\mu} < 1/\lambda^*_{\min}$
by~\eqref{eqn: rgrad threshold}. Reading that backwards, a problem solved uncapped and not at
$\overline{\mu} = 100$ has $\lambda^*_{\min} < 10^{-2}$ at the point the uncapped run reaches,
provided that point is a nondegenerate minimiser and provided the one-dimensional argument of
Section~\ref{sec: threshold} carries to $n$ dimensions with $\lambda^*_{\min}$ the smallest
eigenvalue there. Eleven problems separate the two configurations. Neither proviso is checked
here, so this is a reading of the aggregate against the theory and not a measurement of any
$\lambda^*_{\min}$.

\paragraph{The Cauchy diagnostic does not yet discriminate.}
Claim~\ref{claim: cauchy} predicts that a cap small enough to truncate the first inner iteration
returns the Cauchy point, whose signature in the record of Remark~\ref{rem: cauchy degeneration}
is $\texttt{cg\_iters} = 1$ together with $\cos(s_k, -g_k) = 1$ and
$\texttt{active} = \textnormal{true}$. The diagnostic recorded for this campaign reports
$\texttt{cg\_iters} = 1$ and $\cos(s_k, -g_k) = 1$ to machine precision at every one of the six
caps, and $\texttt{active} = \textnormal{false}$ at every one of them. An inactive constraint
with a single inner iteration is a subproblem solved to the forcing tolerance in one step, not a
truncation, and the three columns take the same values at $\overline{\mu} = 10^{-3}$ and at
$\overline{\mu} = 10^{2}$. The diagnostic separates nothing as recorded and no claim rests on it.

\subsection{Like-for-like}
\label{subsec: like for like}

Eight of the eleven mechanisms of Table~\ref{tab: roster axes} ran on the same $196$ problems, with the
exact Hessian, truncated conjugate gradients at $\chi = 0.1$ and $\theta = 0.5$, and the settings
of Section~\ref{subsec: settings}. Six of the eight share $\gone = 0.25$, $\gtwo = 0.5$ and
$\gthree = 2$, which the run of the previous draft did not. $\Rdfo$ runs at $\zeta = 100$ and
$\Rgrad$ at $\mu_0 = 1$, the settings Sections~\ref{subsec: zeta sweep}
and~\ref{subsec: mu sweep} selected. Capped $\Rgrad$ is that same configuration with
$\overline{\mu} = 128$ and nothing else altered, so the two differ in one coordinate. $\Rrtr$
carries the retrospective thresholds $\widetilde\eta_1 = 0.05$ and $\widetilde\eta_2 = 0.9$,
which have no counterpart in the other rules.

The two adaptive rules are the exception and they are not comparable on the constants. They ran
at $\gone = 0.01$, $\gtwo = 0.9$, $\gthree = 1.5$, $M = 5$ and $\lambda_1 = \lambda_2 = 5$.
These constants are not an instance of the normalisation of \PartI{}, which would need
$\gtwo = 0.5$ to reach $R_{\eta_1}(\eta_1) = 1.5$ and then $R_{\eta_1} \leq 0.5$ below $\eta$,
where this factor approaches $0.9$. The two rules also ran at $\eta = \eta_1 = 0.1$, on the
boundary of the hypothesis $\eta < \eta_1$ of
Update~\ref{update: adaptive step-size dependent update}, where the four conditions still hold
because the factor jumps at $\eta_1$.
Matching those numbers to the other six would not repair the comparison, because in
$\RAdaptstep$ and $\RAdaptiveGrad$ the constants parametrise the smooth factor
$R_{\eta_1}$ of~\eqref{eqn: hei factor} rather than three branch multipliers, and
$R_{\eta_1}(\eta_1) = \gthree$ fixes the expansion at the acceptance threshold rather than in the
very successful branch. The factor axis of Table~\ref{tab: roster axes} is therefore not isolated
by this experiment. The two adaptive rows below are two configurations and not a measurement of
what an adaptive factor buys.

\begin{table}[htbp]
    \centering
    \small
    \begin{tabular}{@{}llrrrrr@{}}
        \toprule
        Rule & Anchor & solved & $\pi(1)$ & median & mean & tail act. \\
        \midrule
        $\Rrtr$            & retrospective & $169/196$ & $0.449$ & $19.0$ & $131.9$ & $0.138$ \\
        $\Rdelta$          & prev.\ radius & $165/196$ & $0.342$ & $18.0$ & $218.8$ & $0.129$ \\
        $\Rdfo$            & criticality   & $162/196$ & $0.301$ & $19.5$ & $237.7$ & $0.151$ \\
        $\RAdaptiveGrad$   & criticality   & $162/196$ & $0.240$ & $23.0$ & $166.2$ & $0.203$ \\
        $\RAdaptstep$      & step          & $149/196$ & $0.214$ & $20.0$ & $329.5$ & $0.298$ \\
        $\Rstep$           & step          & $138/196$ & $0.189$ & $20.0$ & $339.9$ & $0.358$ \\
        $\Rgrad$           & criticality   & $163/196$ & $0.184$ & $28.0$ & $222.4$ & $0.254$ \\
        $\Rgrad$, capped   & criticality   & $155/196$ & $0.174$ & $28.0$ & $341.1$ & $0.322$ \\
        \bottomrule
    \end{tabular}
    \caption{The eight configurations on the $196$-problem CUTEst subset, ordered by efficiency.
    Efficiency is the intercept $\pi(1)$ of the iteration profile of
    Figure~\ref{fig: comparison profile} and the solve rate is its right-hand asymptote. The
    median and the mean are iteration counts over the runs each configuration solves, and those
    subsets differ from row to row. Tail activity is the fraction of the last tenth of the
    iterations on which the trust-region constraint bound.}
    \label{tab: comparison}
\end{table}

\begin{figure}[htbp]
    \centering
    \includegraphics[width=0.78\textwidth]{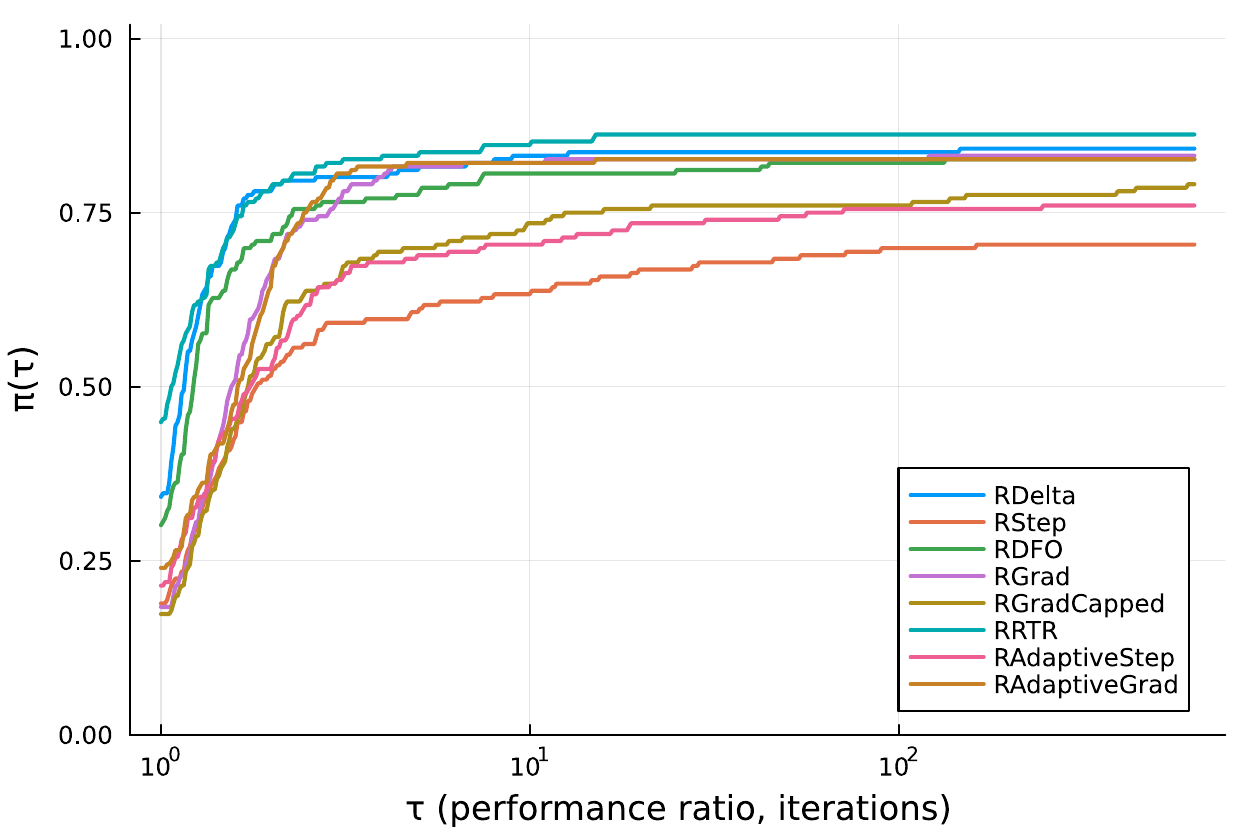}
    \caption{The eight configurations on the $196$-problem subset: the iteration performance
        profile, as the run matrix exported it. The legend carries the configuration names of the
        run record, and their order is the run matrix's rather than the ordering by efficiency of
        Table~\ref{tab: comparison}. Reliability is the right-hand asymptote and efficiency the
        intercept $\pi(1)$; Figure~\ref{fig: comparison activity} plots the second against tail
        activity, where the anchor becomes visible.}
    \label{fig: comparison profile}
\end{figure}

\begin{figure}[htbp]
    \centering
    \begin{tikzpicture}
        \begin{axis}[scatter axis, width=0.72\textwidth,
            xlabel={tail activity}, ylabel={efficiency $\pi(1)$},
            xmin=0.10, xmax=0.40, ymin=0.12, ymax=0.50]
            \addplot[anchor mark={anchorDelta}{*}]        coordinates {(0.129,0.342)};
            \addplot[anchor mark={anchorStep}{square*}]   coordinates {(0.358,0.189) (0.298,0.214)};
            \addplot[anchor mark={anchorCrit}{triangle*}] coordinates
                {(0.151,0.301) (0.254,0.184) (0.322,0.174) (0.203,0.240)};
            \addplot[anchor mark={anchorRetro}{diamond*}] coordinates {(0.138,0.449)};
            \legend{previous radius, step, criticality, retrospective}
            \node[anchor=west,  font=\scriptsize] at (axis cs:0.137,0.342) {$\Rdelta$};
            \node[anchor=west,  font=\scriptsize] at (axis cs:0.146,0.449) {$\Rrtr$};
            \node[anchor=west,  font=\scriptsize] at (axis cs:0.159,0.301) {$\Rdfo$};
            \node[anchor=south, font=\scriptsize] at (axis cs:0.203,0.251) {$\RAdaptiveGrad$};
            \node[anchor=north, font=\scriptsize] at (axis cs:0.254,0.177) {$\Rgrad$};
            \node[anchor=south, font=\scriptsize] at (axis cs:0.298,0.225) {$\RAdaptstep$};
            \node[anchor=north, font=\scriptsize] at (axis cs:0.322,0.166) {$\Rgrad$ capped};
            \node[anchor=south, font=\scriptsize] at (axis cs:0.358,0.200) {$\Rstep$};
        \end{axis}
    \end{tikzpicture}
    \caption{Efficiency against tail activity, one point per configuration, from
        Table~\ref{tab: comparison}. The mark and its colour are what the radius is anchored to.
        The rules that release the constraint are the rules that are fastest, and the anchor
        sorts the plot from left to right.}
    \label{fig: comparison activity}
\end{figure}

$\Rrtr$ is first on both numbers a profile is read for, with $\pi(1) = 0.449$ against $0.342$ for
its nearest rival and $169$ problems solved of $196$. $\Rdelta$ is second on both. The reading
that costs \prop{this programme} something is the second one. $\Rdelta$ is the textbook fixed-factor rule,
it carries no information about the problem beyond its own history, and only the retrospective
rule improves on it here. Five of the six mechanisms that use more information than $\Rdelta$
does are slower and less reliable than it is.

Efficiency and the solve rate order the configurations the same way except at one place.
$\Rgrad$ is third on the solve rate with $163$ problems and seventh on efficiency with
$\pi(1) = 0.184$, and its median iteration count is $28$ against $18$ to $20$ for the rules
above it. The gradient anchor reaches a first-order point on as many problems as $\Rdelta$ does
and pays about half as many iterations again to get there. That is the anchor axis of
\PartII{} appearing as a cost rather than as a failure, and it is invisible in a solve rate
quoted on its own.

Tail activity sorts the table almost exactly. The four configurations with the least active
tails, between $0.129$ and $0.203$, are the four with the highest efficiency, and the four with
the most active tails, between $0.254$ and $0.358$, are the four lowest. Across the eight the
correlation between tail activity and $\pi(1)$ is $-0.84$, and $-0.86$ on ranks. Eight
configurations are not eight independent draws, so this is a description of
Figure~\ref{fig: comparison activity} and not an estimate of anything. What it supports is the
use of the activity indicator as the instrument of Section~\ref{subsec: two regimes}, since it
is the one quantity in Table~\ref{tab: comparison} that both regimes measure and it tracks the
aggregate cost.

Two pairs differ in a single coordinate and both are readable. $\Rgrad$ against capped
$\Rgrad$, at $\mu_0 = 1$ throughout, isolates the cap: $163$ problems against $155$, tail
activity $0.254$ against $0.322$, and a mean iteration count of $222.4$ against $341.1$. A cap
of $128$ is large and it still costs eight problems, which is the reading of
Section~\ref{subsec: mu sweep} confirmed at a second value of $\mu_0$. $\Rdelta$ against
$\Rstep$ isolates the anchor at a fixed factor, and it costs $27$ problems and trebles the tail
activity. The step anchor rebuilds the radius from a step that the model has just chosen to keep
short, so a run that takes an interior step contracts the region it did not need to contract.

$\Rdfo$ reproduces the $\zeta = 100$ row of Table~\ref{tab: zeta sweep} exactly, in solve rate,
median, mean and tail activity, from a separate archive built at the same commit. Nothing in this
section is a claim of Parts~I and~II settled or refuted. It says that the anchor axis is visible
in what a user pays, that the factor axis is not measured, and that the ordering of the
mechanisms by aggregate cost is not the ordering their theory would suggest.

\paragraph{Two exported figures are not printed.}
The run matrix also exports a profile in objective evaluations and a data profile, and neither
adds anything as it stands. The objective-evaluation profile has the same intercept and the same
right-hand asymptote as Figure~\ref{fig: comparison profile} for all eight configurations, and
differs from it by at most $0.031$, which is six problems, anywhere in between. A trust-region
iteration evaluates the objective at exactly one trial point, so the two measures are the same
measure. A second cost axis has to be gradient evaluations, which are paid only on accepted
steps, or Hessian-vector products, which the subproblem solver decides.

\section{Inactivity and the local rate}
\label{sec: inactivity}

The activity indicator is the one instrument both regimes share, and every statement conditional
on inactivity is read from it. This section measures it directly on CUTEst, then estimates the
local convergence order on the part of each run where the constraint has stopped binding.
Section~\ref{subsec: inactivity} asks how far the mechanisms separate on that instrument when each
is set as a user would set it. Section~\ref{subsec: order rules} settles whether the rate after
the constraint releases depends on the rule.

\subsection{Measuring inactivity}
\label{subsec: inactivity}

\paragraph{What is recorded and how.}
The driver marks an iteration \emph{active} when $\norm{s_k} = \Delta_k$ within the subproblem
solver's own tolerance, and writes that flag to the run record. Two quantities are derived from
the flag sequence. The \emph{tail activity} is the fraction of the last tenth of the iterations
that are active. The \emph{inactivity index} $k^\star$ is the first iteration after which no
later iteration is active, and it is undefined when the final iteration is active.

Two decisions in that procedure carry weight and both go against the convenient choice. A tail
fraction is computed only for runs of at least $30$ iterations. On a ten-iteration run the last
tenth is a single iteration, the fraction can only be $0$ or $1$, and a table of such values
measures rounding rather than asymptotics. And no run is filtered on success. A rule that keeps
the constraint bound is exactly the rule that exhausts the iteration budget, so filtering on
solved runs would discard the evidence the experiment exists to collect. The two are the same
runs, as the outcome counts below show.

\paragraph{The roster.}
Six configurations ran on the $196$ problems under the settings of
Section~\ref{subsec: settings}, at a first-order tolerance of $10^{-5}$. Every one is the
mechanism as a user would set it, at the constants of Table~\ref{tab: settings}, and no
configuration was chosen to fail. That is the design decision this section rests on. The question
it can answer is not whether a rule placed below its threshold stays active, which
Section~\ref{sec: threshold} settles on problems where $\kbar$ is known, but whether the
separation \PartII{} predicts is still visible when nothing is tuned to make it visible.

Two of the six differ from a sibling in one parameter and nothing else. $\Rgrad$ and $\Rgrad$
capped share $\mu_0 = 1$, the same branch factors and the same subproblem solver, and differ only
in that the second imposes $\overline{\mu} = 128$. $\Rdfo$ runs at the $\zeta = 100$ of
Table~\ref{tab: settings}. That pair is the controlled contrast of this section, and $128$ is a
generous cap rather than a punitive one.

\begin{table}[htbp]
    \centering
    \small
    \begin{tabular}{@{}llrrrr@{}}
        \toprule
        Rule & Parameter & solved & measured & mean tail activity & never inactive \\
        \midrule
        $\Rdelta$        & ---                        & $165/196$ & $86$  & $0.399$ & $28/196 = 0.14$ \\
        $\Rgrad$         & $\overline{\mu} = +\infty$ & $163/196$ & $109$ & $0.413$ & $24/196 = 0.12$ \\
        $\RAdaptiveGrad$ & ---                        & $162/196$ & $98$  & $0.423$ & $32/196 = 0.16$ \\
        $\Rdfo$          & $\zeta = 100$              & $162/196$ & $96$  & $0.456$ & $28/196 = 0.14$ \\
        \midrule
        $\Rgrad$, capped & $\overline{\mu} = 128$     & $155/196$ & $114$ & $0.553$ & $51/196 = 0.26$ \\
        $\Rstep$         & ---                        & $138/196$ & $107$ & $0.797$ & $59/196 = 0.30$ \\
        \bottomrule
    \end{tabular}
    \caption{Trust-region activity on the $196$-problem CUTEst subset, over all runs and not only
    the solved ones, with every rule at the constants of Table~\ref{tab: settings}.
    \emph{Measured} counts the runs of at least $30$ iterations, which are the only runs a tail
    fraction is computed for, and the mean is taken over those; the same quantity in
    Table~\ref{tab: comparison} is averaged over the runs that solved, so the two columns are not
    comparable without saying which population they read. \emph{Never inactive} counts runs whose
    constraint bound at the final iteration, over all $196$. The two rows below the rule are the
    two configurations that keep the constraint bound on a quarter of the collection or more.}
    \label{tab: inactivity}
\end{table}

\paragraph{The mechanisms separate by a factor of two on their default settings.}
Mean tail activity runs from $0.399$ for $\Rdelta$ to $0.797$ for $\Rstep$, and the constraint
never releases on $12$ per cent of the collection for $\Rgrad$ against $30$ per cent for
$\Rstep$. Nothing was tuned to produce that spread. Four of the six sit together between $0.399$
and $0.456$, with a quarter or less of their runs bound at the last iteration, and the two that
stand out do so for reasons \PartII{} names in advance.

That is a weaker statement than a dichotomy and it is the statement this campaign supports. Every
configuration here is set as a user would set it, so none of them is below its threshold; what
separates them is structural. A section claiming that a rule below its threshold never releases
would be claiming more than Table~\ref{tab: inactivity} contains, and
Section~\ref{sec: threshold} settles that question on problems where $\kbar$ can be computed
rather than assumed.

\paragraph{The cap is what costs $\Rgrad$ its inactivity.}
$\Rgrad$ and $\Rgrad$ capped differ in one parameter. Imposing $\overline{\mu} = 128$ raises mean
tail activity from $0.413$ to $0.553$ and more than doubles the number of runs whose constraint
never releases, from $24$ to $51$. Section~\ref{subsec: like for like} reads the same contrast in
solved problems and in cost; this is what it does to the constraint. A cap of $128$ is not a small
constant, and \PartII{} predicts the effect: an unbounded multiplier is what lets the radius
outgrow any fixed multiple of $\norm{g_k}$, and capping it reintroduces a constant against which
the solution has to be measured. Being a one-parameter contrast rather than a comparison across
rules, it carries more than the column of means does.

\paragraph{$\Rstep$ has no parameter to blame.}
$\Rstep$ reaches $0.797$ and leaves the constraint bound at the last iteration on $59$ of $196$
problems, with no threshold constant set badly or at all. The step anchor rebuilds the radius from
a step whose length the radius itself bounded, so a run whose steps sit on the boundary is a run
whose radius follows the boundary. That is the anchor axis producing the symptom a badly chosen
$\zeta$ produces in Section~\ref{sec: threshold}, without a parameter to blame, and it is
consistent with the tail activity of $0.358$ that $\Rstep$ records in
Table~\ref{tab: comparison}.

The run outcomes corroborate the ordering on an instrument that does not depend on the
thirty-iteration filter. The wall-clock cap ends between $8$ and $9$ runs per rule, near enough
uniform across the six, so it carries no information about the mechanism. The iteration cap does:
it ends $32$ runs for $\Rstep$ and $6$ for $\Rgrad$ capped, against $5$, $2$, $1$ and $1$ for
$\Rdfo$, $\Rdelta$, $\Rgrad$ and $\RAdaptiveGrad$. The two configurations that exhaust the
iteration budget are the two with the highest tail activity, and they are the same two.

\paragraph{What the filter does to the numbers.}
Excluding runs shorter than $30$ iterations removes exactly the easy problems, on which a well-set
rule releases the constraint after a handful of iterations, so every mean in
Table~\ref{tab: inactivity} is computed on the harder part of that rule's collection. The effect
is large. $\Rdelta$ has $86$ measured runs of $196$ and a mean of $0.399$ here, against $0.129$ in
Table~\ref{tab: comparison}, which averages the same quantity over the runs that solved. The two
numbers are not in conflict and neither is wrong. They answer different questions, and any
comparison across the two tables has to say which population it is reading. The \emph{measured}
column is worth reading on its own account: it runs from $86$ for $\Rdelta$ to $114$ for $\Rgrad$
capped, so the rules that keep the constraint bound are also the rules that take longer to
finish, and the two columns are not independent evidence.

\subsection{The order after inactivity, across mechanisms}
\label{subsec: order rules}

\paragraph{The estimator.}
Claim~\ref{claim: rate} is conditional on inactivity, so the estimate has to be conditional on it
too. Each run is trimmed to its final run of inactive iterations, and the observed order is the
ordinary least-squares slope of $\log\norm{g_{k+1}}$ on $\log\norm{g_k}$ over that tail.
Including active iterations would mix the linear phase with the asymptotic one and return a
number between the two, which is an artefact and not a rate. The tolerance is $10^{-10}$ rather
than the $10^{-5}$ of every other experiment, because the asymptotic regime is what is being
measured and a run stopped at $10^{-5}$ may never enter it.

\paragraph{Four exclusions, and why each is necessary.}
A run contributes an order only if it converged, released the constraint, and left a tail long
enough and well enough conditioned to fit. The four exclusion counts appear in
Table~\ref{tab: order by rule} beside the fitted count, because each is a way the median could be
biased and none of them is uniform across rules.

\begin{itemize}
    \item \emph{never}. The constraint bound at the last iteration, so there is no inactive tail.
          For a configuration below its threshold that is the result and not a gap in it.
    \item \emph{unconv}. The run did not reach a first-order point. This is the exclusion that
          matters. A stalled run has an inactive tail, its steps being tiny and interior, so it
          passes every structural guard and yields a slope near $1$. That is the signature of a
          run going nowhere rather than a convergence order. On the earlier archive $100$ of
          $459$ otherwise fitted runs had not converged, and including them moved the pooled
          median from $1.422$ to $1.308$, with a contamination between $20$ and $31$ per cent
          across rules and therefore correlated with the axis under test.
    \item \emph{short}. Fewer than six inactive iterations. Below that the regression has two or
          three points and returns a number that looks like a rate.
    \item \emph{flat}. The tail is long enough and degenerate, with fewer than four finite
          positive values or no variance in the regressor. The estimator returns a NaN, which a
          median sorts last and which therefore biases the reported value downwards without
          raising an error. This count is zero on the present archive.
\end{itemize}

\begin{table}[htbp]
    \centering
    \small
    \begin{tabular}{@{}lrrrrrrrr@{}}
        \toprule
        Rule & fitted & unconv. & short & flat & never & median order & IQR of order
          & median $k^\star$ \\
        \midrule
        $\Rdfo$          & $66$ & $29$ & $21$ & $0$ & $80$ & $1.387$ & $1.21$--$1.75$ & $9.5$  \\
        $\Rrtr$          & $77$ & $31$ & $28$ & $0$ & $60$ & $1.392$ & $1.21$--$1.64$ & $12$   \\
        $\RAdaptiveGrad$ & $61$ & $40$ & $27$ & $0$ & $68$ & $1.401$ & $1.29$--$1.71$ & $13$   \\
        $\Rdelta$        & $76$ & $28$ & $24$ & $0$ & $68$ & $1.407$ & $1.21$--$1.68$ & $10$   \\
        $\Rgrad$         & $41$ & $38$ & $50$ & $0$ & $67$ & $1.447$ & $1.28$--$1.83$ & $15$   \\
        $\Rgrad$, capped & $38$ & $31$ & $38$ & $0$ & $89$ & $1.491$ & $1.35$--$1.84$ & $12.5$ \\
        $\RAdaptstep$    & $43$ & $29$ & $50$ & $0$ & $74$ & $1.576$ & $1.31$--$1.81$ & $5$    \\
        $\Rstep$         & $29$ & $20$ & $57$ & $0$ & $90$ & $1.583$ & $1.16$--$1.87$ & $3$    \\
        \bottomrule
    \end{tabular}
    \caption{Observed local order on the inactive tail, over $196$ problems with the exact
    Hessian, truncated conjugate gradients and a tolerance of $10^{-10}$, ordered by median
    order. The five count columns sum to $196$ in every row. $k^\star$ is the first permanently
    inactive iteration, over the fitted runs.}
    \label{tab: order by rule}
\end{table}

\begin{figure}[htbp]
    \centering
    \begin{subfigure}[b]{0.49\textwidth}
        \centering
        \begin{tikzpicture}
            \begin{axis}[box axis, ylabel={observed order}, ytick={0.5,1,1.5,2,2.5},
                ymin=0.3, ymax=2.7,
                xticklabels={$\Rdfo$, $\Rrtr$, $\RAdaptiveGrad$, $\Rdelta$, $\Rgrad$,
                             $\Rgrad$ capped, $\RAdaptstep$, $\Rstep$}]
                \addplot[boxplot prepared={lower whisker=0.805, lower quartile=1.212,
                    median=1.387, upper quartile=1.752, upper whisker=2.250}] coordinates {};
                \addplot[boxplot prepared={lower whisker=0.700, lower quartile=1.209,
                    median=1.392, upper quartile=1.638, upper whisker=2.250}] coordinates {};
                \addplot[boxplot prepared={lower whisker=0.974, lower quartile=1.289,
                    median=1.401, upper quartile=1.713, upper whisker=2.072}] coordinates {};
                \addplot[boxplot prepared={lower whisker=0.536, lower quartile=1.207,
                    median=1.406, upper quartile=1.683, upper whisker=2.250}] coordinates {};
                \addplot[boxplot prepared={lower whisker=0.976, lower quartile=1.280,
                    median=1.447, upper quartile=1.826, upper whisker=2.497}] coordinates {};
                \addplot[boxplot prepared={lower whisker=1.136, lower quartile=1.350,
                    median=1.491, upper quartile=1.843, upper whisker=2.497}] coordinates {};
                \addplot[boxplot prepared={lower whisker=0.968, lower quartile=1.314,
                    median=1.576, upper quartile=1.812, upper whisker=2.172}] coordinates {};
                \addplot[boxplot prepared={lower whisker=0.934, lower quartile=1.161,
                    median=1.583, upper quartile=1.868, upper whisker=2.037}] coordinates {};
                \draw[pred rule] (axis cs:0.4,1.5) -- (axis cs:8.6,1.5);
                \draw[ref rule]  (axis cs:0.4,1.0) -- (axis cs:8.6,1.0);
                \draw[ref rule]  (axis cs:0.4,2.0) -- (axis cs:8.6,2.0);
            \end{axis}
        \end{tikzpicture}
        \caption{}
        \label{fig: order box}
    \end{subfigure}
    \hfill
    \begin{subfigure}[b]{0.49\textwidth}
        \centering
        \begin{tikzpicture}
            \begin{axis}[box axis, ylabel={$k^\star$, first permanently inactive iteration},
                ymin=0, ymax=155,
                xticklabels={$\Rdfo$, $\Rrtr$, $\RAdaptiveGrad$, $\Rdelta$, $\Rgrad$,
                             $\Rgrad$ capped, $\RAdaptstep$, $\Rstep$}]
                \addplot[boxplot prepared={lower whisker=0, lower quartile=1,
                    median=9.5, upper quartile=41.5, upper whisker=98}] coordinates {};
                \addplot[boxplot prepared={lower whisker=0, lower quartile=2,
                    median=12, upper quartile=39, upper whisker=73}] coordinates {};
                \addplot[boxplot prepared={lower whisker=0, lower quartile=1,
                    median=13, upper quartile=60, upper whisker=148}] coordinates {};
                \addplot[boxplot prepared={lower whisker=0, lower quartile=1.8,
                    median=10, upper quartile=44.5, upper whisker=98}] coordinates {};
                \addplot[boxplot prepared={lower whisker=0, lower quartile=0,
                    median=15, upper quartile=32, upper whisker=78}] coordinates {};
                \addplot[boxplot prepared={lower whisker=0, lower quartile=0,
                    median=12.5, upper quartile=32, upper whisker=78}] coordinates {};
                \addplot[boxplot prepared={lower whisker=0, lower quartile=1,
                    median=5, upper quartile=19.5, upper whisker=41}] coordinates {};
                \addplot[boxplot prepared={lower whisker=0, lower quartile=0,
                    median=3, upper quartile=9, upper whisker=21}] coordinates {};
            \end{axis}
        \end{tikzpicture}
        \caption{}
        \label{fig: kstar box}
    \end{subfigure}
    \caption{The two halves of Claim~\ref{claim: rate}, over the fitted runs of
        Table~\ref{tab: order by rule}, in the order of that table. Left, the observed order.
        The dashed line at $1.5$ is the order an inexact Newton iteration with the forcing
        exponent $\theta = \tfrac12$ of Algorithm~\ref{alg: SteihaugCG} attains, and the two grey
        lines mark linear and quadratic convergence. Right, the first permanently inactive
        iteration. Whiskers are at $1.5$ times the interquartile range and the outliers beyond
        them are not drawn. The right panel is clipped at $k^\star = 155$, which hides between
        one and ten outliers per rule, the largest being $4802$.}
    \label{fig: order}
\end{figure}

\paragraph{The rate does not depend on the rule.}
The eight distributions of Figure~\ref{fig: order box} overlap almost completely. Every
interquartile range contains $1.4$, the lower quartiles run from $1.16$ to $1.35$ and the upper
from $1.64$ to $1.87$, and the medians span $1.387$ to $1.583$. Pooling the $431$ fitted runs
gives a median of $1.422$ with quartiles $1.235$ and $1.757$. On this evidence the first half of
Claim~\ref{claim: rate} stands: conditional on inactivity, the observed order is a property of
the model and the subproblem solver rather than of the radius rule.

The residual spread of $0.196$ between the medians is a selection effect and not a mechanism.
Across the eight rules the median order correlates at $+0.88$ with the number of runs excluded as
\emph{short} and at $-0.83$ with the number fitted. The two highest medians, $\Rstep$ at $1.583$
and $\RAdaptstep$ at $1.576$, come from the two rules with the fewest fitted runs, $29$ and $43$
of $196$, and with the smallest median $k^\star$. A rule that releases the constraint at
iteration $3$ yields an inactive tail only on the problems it solves quickly, so its median order
is taken over the easiest problems in the collection. Reading the ordering of the median column
as a difference between mechanisms would be reading the exclusion counts.

\paragraph{What the rule controls is weaker than it looks.}
The second half of Claim~\ref{claim: rate} asserts that the mechanisms differ in when inactivity
is reached. The medians do differ, from $k^\star = 3$ for $\Rstep$ to $15$ for $\Rgrad$, a factor
of five. Figure~\ref{fig: kstar box} puts that in proportion. Within a single rule $k^\star$ runs
from $0$ to the upper quartile at $9$ to $60$ and on to outliers in the thousands, so the spread
across problems inside one rule is an order of magnitude larger than the spread between the
rules. On this collection the mechanism is a real but weak predictor of when the constraint
releases, and the strong version of the claim, that the mechanism decides $k^\star$, is not what
Table~\ref{tab: order by rule} shows.

Two smaller readings remain. Thirty-nine of the $431$ fitted values fall below $1$ and
one is negative, which no convergence order can be, so the four guards do not catch every fit
that is not measuring a rate. Twenty-two exceed $2$, which the exact Hessian cannot deliver
either. Both counts are small and neither changes the verdict;
Section~\ref{sec: threats} places them among the limits of the design.

\paragraph{Verdict on Claim~\ref{claim: rate}. First half stands, second half holds only
weakly.}
The claim has two halves: conditional on inactivity the observed rate is that of the underlying
(quasi-)Newton iteration and independent of the mechanism, and the mechanisms differ in when
inactivity is reached. The first is what Figure~\ref{fig: order} shows. The eight distributions
overlap almost completely, every interquartile range contains $1.4$, and the pooled median of the
$431$ fitted runs is $1.422$, so the rate belongs to the model and the subproblem solver and not
to the rule. The second is true and weaker than the claim states. The mechanisms do differ, from
$k^\star = 3$ for $\Rstep$ to $15$ for $\Rgrad$, but within a single rule $k^\star$ spans an
order of magnitude more than that, so the mechanism is a real but weak predictor of when the
constraint releases, and the strong form of the claim is not what
Table~\ref{tab: order by rule} shows.

\section{Model vs radius rule} 
\label{sec: model vs radius rule}

\subsection{The  interaction between model and radius rule}
\label{subsec: interaction}

Four rules crossed with four model Hessians give sixteen configurations, each run on the same
$196$ problems, with truncated conjugate gradients throughout and the settings of
Section~\ref{subsec: settings}. The models are the exact Hessian, limited-memory BFGS and SR1
at memory $5$, and the scaled identity $B_k = I$. The runs are deterministic, so the grid has no
replication and the numbers below carry no sampling error of their own. $\Rdfo$ runs at the
$\zeta = 100$ of Table~\ref{tab: settings}, as everywhere else in this \thisdoc{}, so its
exact-Hessian cell reproduces the $162$ of Table~\ref{tab: comparison} and the grid varies the
model alone.

The identity column is the control that makes the grid worth running. With $B_k = I$ the model
minimiser is $-g_k$, so the step is $-\min\{\norm{g_k}, \Delta_k\}\,g_k/\norm{g_k}$ and the method
is steepest descent with a step length the radius rule alone decides. Whatever separates the
rules in that column is the rule and nothing else.

\paragraph{How the grid is read.}
Write $p_{ij}$ for the fraction of the $196$ problems that rule $i$ solves under model $j$. The
grid is decomposed into a grand mean, a row effect, a column effect and a residual,
\begin{equation}
    p_{ij} = \overline{p} + (\overline{p}_{i\cdot} - \overline{p})
           + (\overline{p}_{\cdot j} - \overline{p}) + r_{ij} ,
    \qquad
    r_{ij} = p_{ij} - \overline{p}_{i\cdot} - \overline{p}_{\cdot j} + \overline{p} ,
    \label{eqn: additive decomposition}
\end{equation}
and $r_{ij}$ is what the two factors do together that neither predicts alone. A grid on which the
rule and the model act independently has $r_{ij} = 0$ in every cell.

\begin{figure}[htbp]
    \centering
    \includegraphics[width=0.72\textwidth]{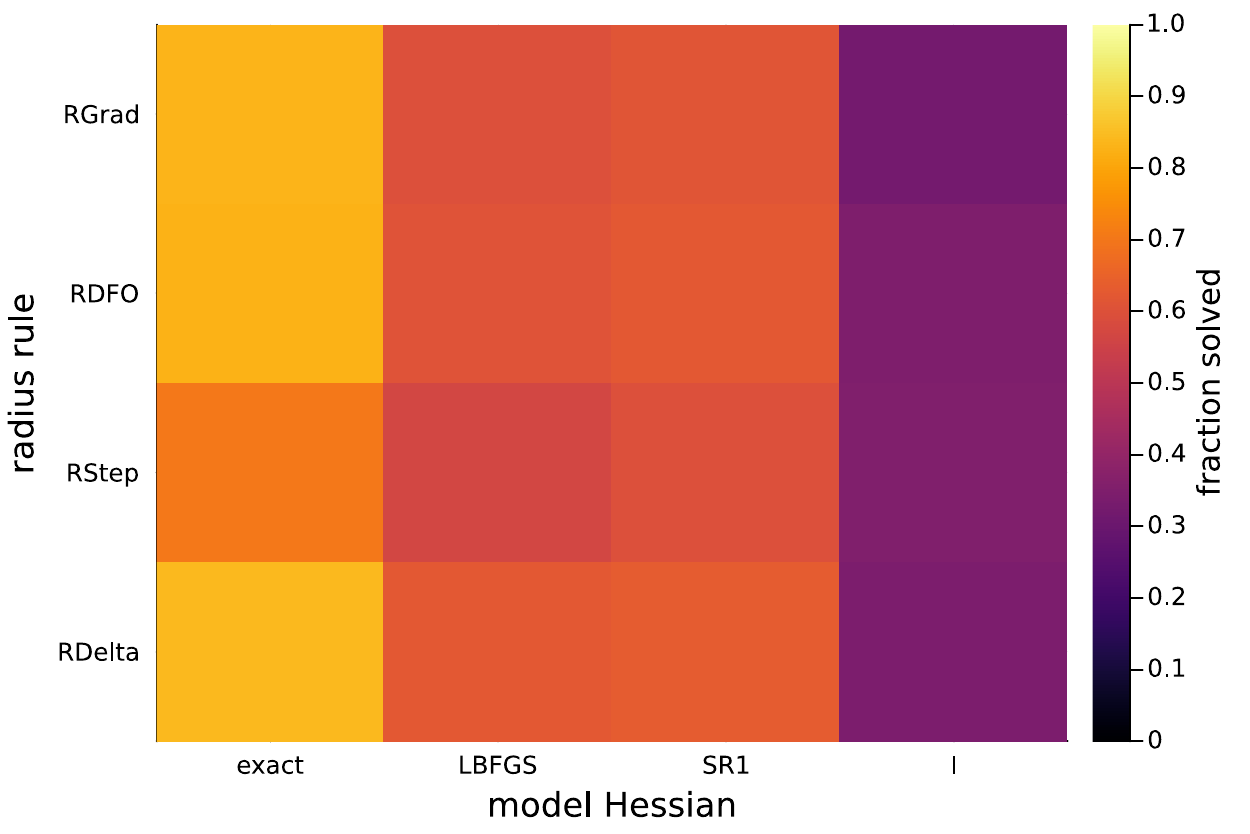}
    \caption{The fraction of the $196$ problems each pair solves, as the run matrix plots it. The
        colour scale runs from $0$ to $1$ and the four columns use most of it, so the figure
        carries the model effect and only that. The differences between the rules within a
        column, which are between $5$ and $26$ problems, fall below the resolution of the scale
        and the departure from additivity is smaller still. Both are read from
        Tables~\ref{tab: interaction} and~\ref{tab: additivity} instead.}
    \label{fig: interaction}
\end{figure}

\begin{table}[htbp]
    \centering
    \small
    \begin{tabular}{@{}lrrrrr@{}}
        \toprule
        & \multicolumn{4}{c}{model Hessian} & \\
        \cmidrule(lr){2-5}
        Rule & exact & L-BFGS & SR1 & $I$ & row effect \\
        \midrule
        $\Rdelta$ & $165\ (18)$ & $122\ (46)$ & $124\ (50)$ & $67\ (142)$ & $+0.021$ \\
        $\Rdfo$   & $162\ (20)$ & $119\ (52)$ & $122\ (50)$ & $68\ (146)$ & $+0.012$ \\
        $\Rgrad$  & $163\ (28)$ & $117\ (44)$ & $120\ (54)$ & $63\ (140)$ & $+0.002$ \\
        $\Rstep$  & $138\ (20)$ & $111\ (43)$ & $117\ (52)$ & $69\ (181)$ & $-0.034$ \\
        \midrule
        column effect & $+0.212$ & $+0.009$ & $+0.027$ & $-0.248$ & \\
        \bottomrule
    \end{tabular}
    \caption{Problems solved of $196$, with the median iteration count over the solved runs in
    brackets. The row and column effects are the deviations
    $\overline{p}_{i\cdot} - \overline{p}$ and $\overline{p}_{\cdot j} - \overline{p}$
    of~\eqref{eqn: additive decomposition}, on the fraction-solved scale, with
    $\overline{p} = 0.589$. The median columns are taken over the runs each cell solves and those
    sets differ from cell to cell.}
    \label{tab: interaction}
\end{table}

\begin{table}[htbp]
    \centering
    \small
    \begin{tabular}{@{}lrrrrr@{}}
        \toprule
        Rule & exact & L-BFGS & SR1 & $I$ & spread within row \\
        \midrule
        $\Rdelta$ & $+0.020$ & $+0.004$ & $-0.004$ & $-0.019$ & $0.040$ \\
        $\Rdfo$   & $+0.014$ & $-0.003$ & $-0.005$ & $-0.005$ & $0.019$ \\
        $\Rgrad$  & $+0.029$ & $-0.003$ & $-0.005$ & $-0.021$ & $0.050$ \\
        $\Rstep$  & $-0.063$ & $+0.002$ & $+0.015$ & $+0.046$ & $0.108$ \\
        \midrule
        spread within column & $0.092$ & $0.006$ & $0.020$ & $0.066$ & \\
        \bottomrule
    \end{tabular}
    \caption{The residuals $r_{ij}$ of~\eqref{eqn: additive decomposition}, in fractions of $196$
    problems, with the range of each row and column beside them. A residual of $0.05$ is ten
    problems. The two middle columns are additive to within $0.015$ and the two outer ones are
    not, and the column ranges say the same thing in one line.}
    \label{tab: additivity}
\end{table}

\paragraph{The model decides, by a factor of eight.}
The column effects run from $+0.212$ for the exact Hessian to $-0.248$ for the identity, a range
of $90$ problems, and that range is what Figure~\ref{fig: interaction} shows as four bands of
colour. The row effects run from $+0.021$ to $-0.034$, a range of $11$ problems, which the same
figure cannot show. Averaged over the four rules, moving from the exact Hessian to a quasi-Newton
model costs $40$ problems against L-BFGS and $36$ against SR1, and moving to the identity costs
about ninety. Partitioning the sum of squares of the grid puts $96.5$ per cent on the model,
$1.6$ per cent on the rule and $1.9$ per cent on the residual. On this collection the model
Hessian decides whether the method converges and the radius rule adjusts the cost, which is the
division of labour Sections~\ref{sec: threshold} and~\ref{sec: inactivity} describe.

\paragraph{The grid is not additive, and the departure is the size of the rule effect.}
The largest residual is $-0.063$ for $\Rstep$ under the exact Hessian, which is twelve problems.
That is small beside the model effect and it is not small in the only comparison that matters
here, since the entire rule main effect spans $11$ problems. The residual sum of squares exceeds
the row sum of squares, $1.9$ per cent against $1.6$. A previous draft of this section read the
same residual as evidence that the two axes act independently. That inference does not follow. What
the grid supports is that the rule effect and the interaction are both small beside the model
effect, and that they are the same size as each other.

The departure has a shape rather than being scattered, and the shape is one rule. Reading the last
row of Table~\ref{tab: additivity}, the residuals span $0.092$ across the exact-Hessian column and
$0.066$ across the identity column, against $0.006$ and $0.020$ across the two quasi-Newton
columns. Reading the last column instead, $\Rstep$ spans $0.108$ where the other three span
$0.019$ to $0.050$. The signs say the same thing: $\Rstep$ is the one rule that is negative under
the exact Hessian and positive under the identity, and $\Rdelta$, $\Rdfo$ and $\Rgrad$ are the
reverse. A singular value decomposition of the table puts $99.2$ per cent of the residual energy
in that single contrast. The interaction is therefore one effect and not four, and it lies along
the anchor axis of \PartII{}: the one rule that separates is the one whose radius is rebuilt from
a step the radius itself bounded.

\paragraph{The rule matters where the radius is the binding constraint.}
Row by row in Table~\ref{tab: interaction}, the four rules separate by $27$ problems under the
exact Hessian, by $11$ under L-BFGS, by $7$ under SR1 and by $6$ under the identity. The better
the model, the more the radius rule matters.

That ordering is the opposite of the intuitive one. With $B_k = I$ the radius is the only
quantity the method has left to choose, so one might expect the rules to separate most there.
They separate least. The reason is that with no curvature in the model the search direction is
wrong on most of the collection, and a run failing for want of a direction cannot be rescued by a
better radius. All four rules sit near the floor, at $63$ to $69$ problems, and there is little
room left to differ. Under the exact Hessian the direction is right and what remains in question
is whether the radius permits the Newton step, so the rule decides. This is the sentence of the
abstract measured rather than asserted: the mechanisms separate where the radius is the binding
constraint, and the model decides when that is.

\paragraph{Three limits of this grid.}
The residuals have no error bar. The runs are deterministic, so each cell is exact, but the grid
gives no way to say whether a residual of ten problems is large for a sample of $196$ paired
problems. Settling that needs the per-problem outcomes and a paired test over them, and the
archive exports only the cell totals. The median iteration counts are taken over the runs each
cell solves, so the brackets in Table~\ref{tab: interaction} compare different sets of problems
and the apparent cost of the identity model is measured on the sixty-odd problems it manages.
And four rules is not the roster. $\Rrtr$, which Section~\ref{subsec: like for like} places
first, is absent, as are the two adaptive rules, so nothing here bears on the retrospective ratio
or on the factor axis.

\subsection{How a small constant silences the model}
\label{subsec: silencing}

Claim~\ref{claim: cauchy} predicts that a cap small enough to truncate the first inner iteration
returns the Cauchy point and leaves the model Hessian without influence on the direction. The
configuration of Section~\ref{subsec: spd} turns that prediction into something an
experiment can see, because there the model demonstrably steers the run. Silencing it has a
visible consequence, and the run ends somewhere else.

Table~\ref{tab: spd cap sweep} sweeps the cap over four decades on that configuration.
Figure~\ref{fig: spd cap} draws four of the runs.

\begin{table}[htbp]
    \centering
    \caption{Capped $\Rgrad$ over \texttt{SPDTarget} pinned to the saddle, with truncated
        conjugate gradients, from $x_0 = (-0.5, 0.6)$ and a budget of $2000$ iterations.
        \emph{trunc} is the fraction of iterations on which the inner solve truncated on its
        first direction, which is $-g_k$, and it is the only one of the three step classes that
        is gradient descent in disguise. $\cos_{\min}$ is the smallest $\cos(s_k, -g_k)$ over the
        run, and \emph{tail act.} the fraction of the last tenth of the iterations on which the
        constraint bound. In every row the fraction of iterations within $10^{-12}$ of
        $\cos = 1$ equals \emph{trunc}, no iterate exits strictly inside the region, and the
        fraction exiting on the Krylov residual test is the complement of \emph{trunc}.}
    \label{tab: spd cap sweep}
    \small
    \begin{tabular}{rlrrrrlrr}
        \toprule
        $\overline{\mu}$ & status & iters & trunc & $\cos_{\min}$ & tail act.
          & limit & dist & $\norm{g}$ \\
        \midrule
        $0.01$   & \texttt{max\_iter}    & $2000$ & $1.0000$ & $1.000000$ & $1.0000$
          & origin & $1.42\times10^{-2}$ & $6.17\times10^{-4}$ \\
        $0.05$   & \texttt{max\_iter}    & $2000$ & $1.0000$ & $1.000000$ & $1.0000$
          & origin & $3.19\times10^{-3}$ & $3.07\times10^{-5}$ \\
        $0.10$   & \texttt{max\_iter}    & $2000$ & $1.0000$ & $1.000000$ & $1.0000$
          & origin & $1.62\times10^{-3}$ & $7.93\times10^{-6}$ \\
        $0.30$   & \texttt{max\_iter}    & $2000$ & $1.0000$ & $1.000000$ & $1.0000$
          & origin & $5.05\times10^{-4}$ & $7.65\times10^{-7}$ \\
        \midrule
        $0.50$   & \texttt{exception}    & $7$    & $0.8571$ & $0.823669$ & $1.0000$
          & saddle & $8.74\times10^{-2}$ & $5.44\times10^{-2}$ \\
        $1.00$   & \texttt{exception}    & $5$    & $0.8000$ & $0.950326$ & $1.0000$
          & saddle & $3.90\times10^{-2}$ & $2.74\times10^{-2}$ \\
        \midrule
        $2.00$   & \texttt{first\_order} & $5$    & $0.8000$ & $0.909498$ & $0.0000$
          & saddle & $6.94\times10^{-17}$ & $1.73\times10^{-17}$ \\
        $4.00$   & \texttt{first\_order} & $5$    & $0.8000$ & $0.909498$ & $0.0000$
          & saddle & $6.94\times10^{-17}$ & $1.73\times10^{-17}$ \\
        $8.00$   & \texttt{first\_order} & $5$    & $0.8000$ & $0.909498$ & $0.0000$
          & saddle & $6.94\times10^{-17}$ & $1.73\times10^{-17}$ \\
        $128.00$ & \texttt{first\_order} & $5$    & $0.8000$ & $0.909498$ & $0.0000$
          & saddle & $6.94\times10^{-17}$ & $1.73\times10^{-17}$ \\
        \bottomrule
    \end{tabular}
\end{table}

The table splits into three regimes and the middle one is the smallest. From $\overline{\mu} = 2$
upward the run reaches the saddle in five iterations with the constraint released, which is the
behaviour Section~\ref{subsec: spd} reports. Raising the cap by a further factor of $64$
changes nothing. At $\overline{\mu} = 0.5$ and $1$ the run still heads for the saddle, with the
constraint binding throughout. It stops on the \texttt{DomainError} of~\eqref{eqn: phi target},
the iterate having wandered to a point from which the target is no longer downhill.

Below $\overline{\mu} = 0.5$ the run goes to the origin instead. It exhausts $2000$ iterations,
the constraint binds at every one of them, and \emph{trunc} and $\cos_{\min}$ are both exactly
$1$. Every step is $-g_k$ to machine precision. The model that pins its minimiser to the saddle is
still there, still positive definite, and still ignored. The limit is the degenerate critical
point that lies in the gradient direction from the start. Shrinking one user parameter by a factor
of five moved the limit point of the method.

\begin{figure}[htbp]
    \centering
    \includegraphics[width=\textwidth]{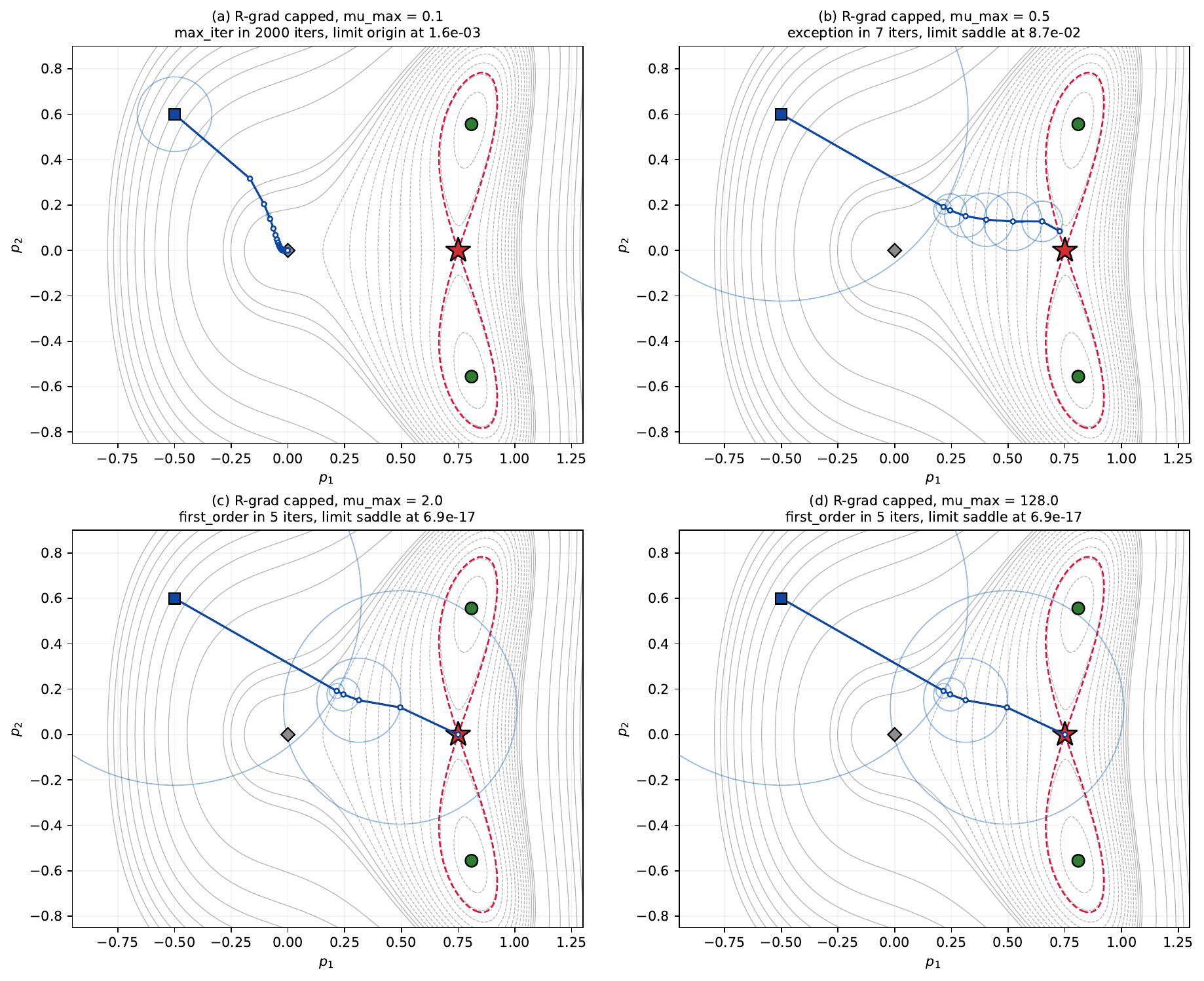}
    \caption{Capped $\Rgrad$ over \texttt{SPDTarget} pinned to the saddle, at
        $\overline{\mu} = 0.1$, $0.5$, $2$ and $128$ in panels (a) to (d). The trust region of
        every accepted step is drawn. The square is $x_0$, the diamond the degenerate origin,
        the star the saddle at $(0.75, 0)$ and the two circles the minimisers. In~(a) the regions
        are too small to leave the gradient direction and the run walks to the origin. From~(c)
        on they are large enough for the model to decide, and the run reaches the saddle in five
        iterations.}
    \label{fig: spd cap}
\end{figure}

\paragraph{The same happens to the other anchored rule.}
Figure~\ref{fig: spd dfo} repeats the sweep on $\Rdfo$, whose radius is held below
$\zeta\omega_k$ rather than tied to a multiple of $\norm{g_k}$. At $\zeta = 0.01$ the run walks to
the origin and exhausts the budget. At $\zeta = 1$, $10$ and $100$ it reaches the saddle in four
iterations. The mechanism is the same and only the parameter that produces a small region differs,
which is what makes this a property of the anchor rather than of one rule.

\begin{figure}[htbp]
    \centering
    \includegraphics[width=\textwidth]{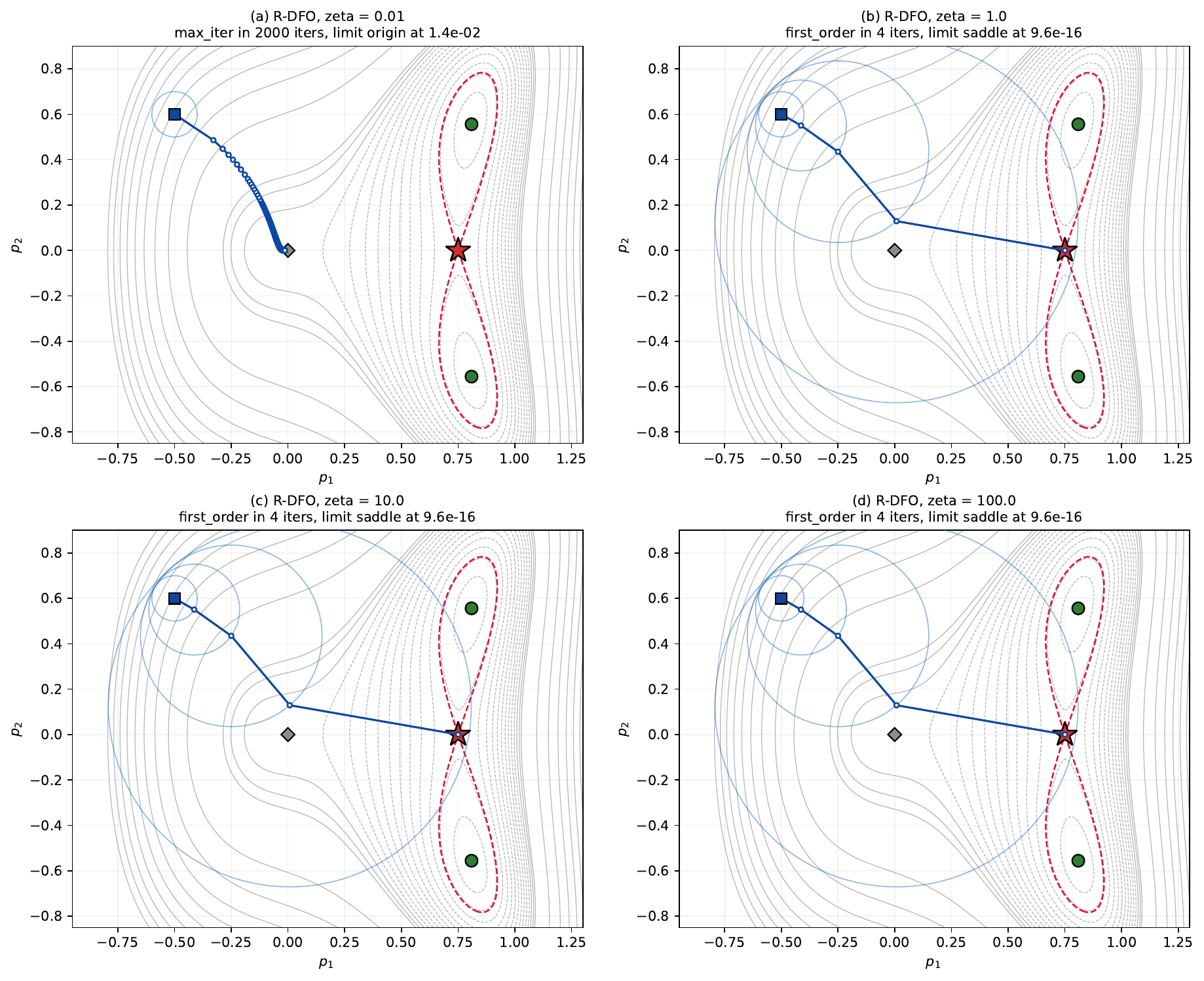}
    \caption{$\Rdfo$ over the same model and from the same starting point, at $\zeta = 0.01$,
        $1$, $10$ and $100$ in panels (a) to (d). Panel (a) exhausts $2000$ iterations at
        $1.4\times10^{-2}$ from the origin. The other three reach the saddle in four iterations.}
    \label{fig: spd dfo}
\end{figure}

\paragraph{Why the step is exactly the gradient direction.}
A small region makes the linear term of the model dominate the quadratic one, so the step that
comes back points along $-g_k$ and the model Hessian has stopped influencing the direction. With
truncated conjugate gradients that degeneration is exact rather than approximate. The first inner
direction is $-g_k$, the region truncates it, and the returned step is a positive multiple of
$-g_k$ with no residue. Over the four rows with $\overline{\mu} \leq 0.30$ the quantity
$1 - \cos(s_k, -g_k)$ has median zero and lies within $4.44\times10^{-16}$ of zero at every
iteration.

The subsolver is not what causes this. Exact solution of the secular equation returns
$s_k = -(H_k + \sigma I)^{-1}g_k$ with $\sigma \to \infty$ as $\Delta_k \to 0$, so the step
approaches the gradient direction without ever reaching it. Run the same sweep under \texttt{ExactMS} and take the median
ratio of the observed $1 - \cos(s_k, -g_k)$ to the value the small-radius expansion predicts. It
is $0.995$, $0.960$ and $0.921$ at the three smallest caps, and falls to $0.050$ at
$\overline{\mu} = 8$. The law holds where the region is small and stops
holding where it is not. A small cap therefore degenerates any subsolver, and truncated conjugate
gradients makes the degeneration exact instead of asymptotic.

\paragraph{The exact Hessian hides the whole effect.}
Running the same problem from the same starting point under \texttt{ExactHessian} shows none of
this. Figure~\ref{fig: exact origin} puts capped $\Rgrad$ at $\overline{\mu} = 0.05$ and $128$
beside $\Rdfo$ at $\zeta = 0.1$ and $20$, and all four reach the origin. Sweeping $\zeta$ over
more than two decades leaves the iteration count at $16$, the status at \texttt{first\_order} and
the limit at the origin on every setting. Only the final radius moves, and it takes three distinct
values over six settings. On this problem, under the exact Hessian, the constant does not decide
the outcome.

\begin{figure}[htbp]
    \centering
    \includegraphics[width=\textwidth]{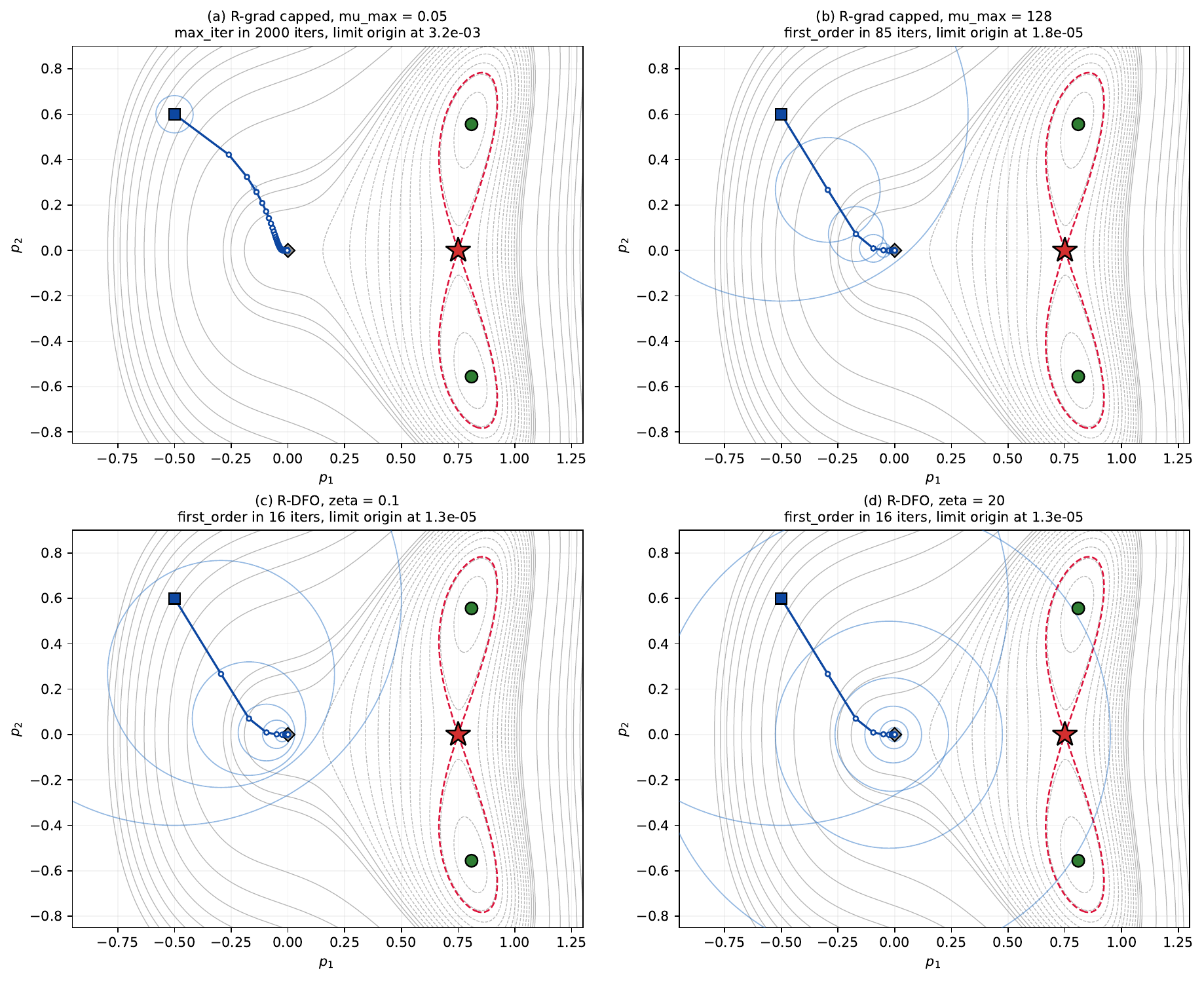}
    \caption{The same problem and starting point under \texttt{ExactHessian}. Capped $\Rgrad$ at
        $\overline{\mu} = 0.05$ and $128$ in panels (a) and (b), $\Rdfo$ at $\zeta = 0.1$ and
        $20$ in (c) and (d). Every panel ends at the degenerate origin, and the constant changes
        the iteration count from $2000$ to $85$ and $16$ without changing the limit.}
    \label{fig: exact origin}
\end{figure}

The two figures together are the finding, and neither carries it alone. Under the exact Hessian
the degenerate origin absorbs every rule at every constant, so nothing separates a run whose model
was silenced from a run whose model pointed at the origin anyway. Building a model that
demonstrably steers elsewhere is what makes the silencing visible. A diagnostic that reports the
limit point alone would call these two experiments the same, since both end at the origin, and
\emph{trunc} and $\cos_{\min}$ are what tell them apart.

\paragraph{Verdict on Claim~\ref{claim: cauchy}. Settled on the designed problems, not in the
aggregate.}
The claim is that with truncated conjugate gradients and a cap small enough that the first inner
iterate leaves the region, the step is the Cauchy point and the method inherits gradient-descent
behaviour. Section~\ref{subsec: silencing} exhibits that outright. Over the four rows of
Table~\ref{tab: spd cap sweep} with $\overline{\mu} \leq 0.30$ the quantity $1 - \cos(s_k, -g_k)$
has median zero and stays within $4.44 \times 10^{-16}$ of zero at every iteration, the model
that pins its minimiser to the saddle is still positive definite and still ignored, and the run
goes to the origin instead. In the aggregate regime the diagnostic recorded for this campaign
reports $\texttt{active} = \textnormal{false}$ at every one of the six caps and takes the same
values at $\overline{\mu} = 10^{-3}$ as at $10^{2}$, so it separates nothing and no reading rests
on it. The failure there is in the recorded quantity, not in the claim.

\paragraph{Verdict on Claim~\ref{claim: bad limits}. Settled.}
The claim is that a configuration can satisfy every hypothesis of \PartI{}, with every $\rho_k$
successful and a well-behaved radius, and still converge to a point that is not a minimiser. The
\texttt{SPDTarget} construction of Section~\ref{subsec: spd} removes the one objection an
invariant-line argument leaves open, by removing the negative curvature from the model
altogether. From $x_0 = (-0.5,\, 0.6)$, which is off the line, every rule and every subproblem
solver converges to the saddle at $(\tfrac34, 0)$ with every $\rho_k$ successful,
$\norm{g_k} \to 0$ and a well-behaved radius. Every hypothesis of \PartI{} holds and the limit is
not a minimiser.

\section{The threshold that predicts inactivity}    
\label{sec: threshold}

Claim~\ref{claim: zeta} asserts a problem-dependent threshold in $\zeta$ and
in $\overline{\mu}$. P2 is the one problem in this study where such a threshold can be located
rather than inferred. Section~\ref{subsec: P2} supplies the curvature at every minimiser in closed
form, and with it the exact activity criterion~\eqref{eqn: sinc activity}.

\paragraph{What the theory gives, and the gap it leaves.}
\PartII{} traps the two criticality-anchored rules below the threshold for different reasons.
Capped $\Rgrad$ sets $\theta_k = \mu_k \leq \overline{\mu}$ at every iteration and $a_k \to
\lambda^*_{\min}$, so~\eqref{eqn: sinc activity} makes the constraint active at every large $k$
whenever
\begin{equation}
    \overline{\mu} \;<\; \frac{1}{\lambda^*_{\min}} .
    \label{eqn: rgrad threshold}
\end{equation}
$\Rdfo$ carries no such bound on $\theta_k$, and the argument instead follows the dynamics of
$\theta_k$ under the update. On an active iteration the map is
$\theta_{k+1} = \gamma\theta_k/(1 - \theta_k a_k)$ with $\gamma \in \{\gtwo, \gthree\}$ chosen by
the criticality test, and the interval $[0, (1-\gtwo)/\overline{a}]$ is forward invariant for it
whenever
\begin{equation}
    \zeta \;\leq\; \frac{1 - \gtwo}{\overline{a}\,(\gthree + 1 - \gtwo)}
    \qquad \Bigl(= \tfrac{1}{5\overline{a}} \ \text{ at } \gtwo = \tfrac12,\ \gthree = 2\Bigr),
    \label{eqn: sinc zeta threshold}
\end{equation}
where $\overline{a} = \max_{x \in N} f_2''(x)$ over a neighbourhood $N$ of the minimiser on which
$f_2'' > 0$. The trap then closes provided $\theta_{k_1} \leq (1-\gtwo)/\overline{a}$ at some
$k_1$ with $x_{k_1} \in N$. Since $\overline{a} \geq \lambda^*_{\min}$, the provable $\Rdfo$
threshold is at most $1/(5\lambda^*_{\min})$ and reaches that value only as $N$ shrinks to the
minimiser.

Neither statement settles the interval between them. Below $1/\lambda^*_{\min}$ the constraint is
active by~\eqref{eqn: rgrad threshold}. Above $\kbar = 8/\lambda^*_{\min}$ inactivity arrives from
the other direction, through the step bound of \PartII{} that defines $\kbar$. On
$\theta \in [1/\lambda^*_{\min},\, 8/\lambda^*_{\min})$ neither theorem speaks, while the exact
criterion~\eqref{eqn: sinc activity} decides the question outright because the problem is
one-dimensional. What follows locates the constant inside that interval.

\paragraph{The roots, to more digits than the experiment needs.}
Section~\ref{subsec: P2} states the expansion of the roots to one term. The experiment starts its
root solve from the truncated asymptotic form, so it needs the next one.

\begin{lemma}[Two terms of the gap to the asymptote]
\label{lem: sinc expansion}
    Write $A_j \defeq (j+\tfrac12)\pi$ and $\delta_j \defeq A_j - y_j$ for the gap between the
    $j$-th root of $\tan y = y$ and the asymptote above it. Then $\delta_j \downarrow 0$ and
    \begin{equation}
        \delta_j = \frac{1}{A_j} + \frac{2}{3A_j^{3}} + \calO\bigl(A_j^{-5}\bigr) .
        \label{eqn: sinc expansion}
    \end{equation}
\end{lemma}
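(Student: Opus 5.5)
Rewrite the root equation in terms of the gap. Since $\tan(A_j - \delta) = \cot\delta$, the condition $\tan y_j = y_j$ with $y_j = A_j - \delta_j$ becomes
\begin{equation*}
    \cot\delta_j = A_j - \delta_j ,
    \qquad\text{equivalently}\qquad
    \delta_j \cot\delta_j = \delta_j A_j - \delta_j^2 .
\end{equation*}
First I will locate the root. On $(0, \pi/2)$ the function $h(\delta) = \cot\delta - A_j + \delta$ decreases strictly from $+\infty$, because $h'(\delta) = 1 - \csc^2\delta = -\cot^2\delta < 0$. It is also negative at $\delta = \pi/2$, since $A_j > \pi/2$ for $j \geq 1$. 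So there is exactly one root $\delta_j \in (0, \pi/2)$, consistent with the statement in Section~\ref{subsec: P2} that $y_j$ lies in $(j\pi, (j+\tfrac12)\pi)$.

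Next the crude bound. From $\cot\delta_j = A_j - \delta_j \geq A_j - \pi/2$ and $\cot\delta \leq 1/\delta$ we get $\delta_j \leq 1/(A_j - \pi/2)$. Hence $\delta_j = \calO(A_j^{-1})$ and $\delta_j \to 0$.

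For monotonicity, view $\delta$ as a function of $A$ through $\cot\delta + \delta = A$. The left side is strictly decreasing in $\delta$, so $\delta$ is strictly decreasing in $A$. Since $A_j$ increases with $j$, this gives $\delta_j \downarrow 0$.

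For the expansion, substitute the Laurent series $\cot\delta = \delta^{-1} - \delta/3 + \calO(\delta^3)$, valid uniformly for $\delta \in (0, \pi/2)$ because the remainder is analytic on $\abs{\delta} < \pi$. This gives
\begin{equation*}
    \frac{1}{\delta_j} = A_j - \delta_j + \frac{\delta_j}{3} + \calO(\delta_j^3) = A_j - \frac{2\delta_j}{3} + \calO(A_j^{-3}),
\end{equation*}
so
\begin{equation*}
    \delta_j = \frac{1}{A_j}\Bigl(1 - \frac{2\delta_j}{3A_j} + \calO(A_j^{-4})\Bigr)^{-1} .
\end{equation*}
I will bootstrap in two passes:
\begin{enumerate}[leftmargin=1.6em]
    \item Inserting the crude bound $\delta_j = \calO(A_j^{-1})$ gives $\delta_j = A_j^{-1} + \calO(A_j^{-3})$.
    \item Reinserting this refines the correction to $\tfrac{2\delta_j}{3A_j} = \tfrac{2}{3A_j^2} + \calO(A_j^{-4})$. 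Expanding $(1-u)^{-1} = 1 + u + \calO(u^2)$ with $u = \calO(A_j^{-2})$ then yields $\delta_j = A_j^{-1} + \tfrac23 A_j^{-3} + \calO(A_j^{-5})$, which is~\eqref{eqn: sinc expansion}.
\end{enumerate}

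The main obstacle is bookkeeping the error orders so that the remainder really is $\calO(A_j^{-5})$ and not $\calO(A_j^{-4})$. Two points secure this. First, the parity of the cotangent series means the first omitted term is $\calO(\delta^3)$, not $\calO(\delta^2)$. Second, the $u^2$ term is $\calO(A_j^{-4})$ inside the bracket, and so contributes $\calO(A_j^{-5})$ after multiplication by $A_j^{-1}$.
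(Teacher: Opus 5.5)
Your proof is correct and follows the paper's route: the same reduction to $\cot\delta_j + \delta_j = A_j$, monotonicity from the strict decrease of $\delta \mapsto \cot\delta + \delta$, and inversion of the Laurent series $\cot\delta = \delta^{-1} - \delta/3 + \calO(\delta^3)$. There are two small differences. Your explicit bound $\delta_j \leq 1/(j\pi)$ replaces the paper's limit-by-contradiction argument for $\delta_j \to 0$ and makes the $\calO(\delta_j^3) = \calO(A_j^{-3})$ bookkeeping explicit. You take from Section~\ref{subsec: P2} that the $j$-th root is the one in $(j\pi, A_j)$, whereas the paper's proof checks this by ruling out roots on $\bigl((j-\tfrac12)\pi,\, j\pi\bigr)$.
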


\begin{proof}
    On $\bigl((j-\tfrac12)\pi,\, j\pi\bigr)$ the tangent is non-positive and the line is
    positive, so no root lies there. On $\bigl(j\pi,\, A_j\bigr)$ the map $y \mapsto \tan y - y$
    has derivative $\tan^2 y \geq 0$ and increases from $-j\pi$ to $+\infty$, so it vanishes
    once. Writing $y_j = A_j - \delta_j$ with $\delta_j \in (0, \tfrac\pi2)$, and using
    $\tan(\tfrac\pi2 - \delta) = \cot\delta$ with the $\pi$-periodicity of the tangent, the
    equation $\tan y_j = y_j$ becomes
    \begin{equation}
        \cot\delta_j + \delta_j = A_j .
        \label{eqn: sinc delta equation}
    \end{equation}
    The map $\delta \mapsto \cot\delta + \delta$ has derivative $-\cot^2\delta \leq 0$, vanishing
    only at $\pi/2$, and runs from $+\infty$ to $-\infty$ over $(0,\pi)$, so $\delta_j$ is its
    unique root there.

    For monotonicity, $A_{j+1} = A_j + \pi$ turns~\eqref{eqn: sinc delta equation} at $\delta_j$
    into $\cot\delta_j + \delta_j - A_{j+1} = -\pi < 0$, so strict decrease gives
    $\delta_{j+1} < \delta_j$. Hence $\delta_j \downarrow \ell \in [0,\pi)$, and $\ell > 0$ would
    make $A_j = \cot\delta_j + \delta_j$ converge, which contradicts $A_j \to \infty$. So
    $\ell = 0$.

    For the expansion, $\cot\delta = \delta^{-1} - \delta/3 + \calO(\delta^{3})$
    in~\eqref{eqn: sinc delta equation} gives
    \begin{equation*}
        \frac{1}{\delta_j} + \frac{2\delta_j}{3} + \calO(\delta_j^{3}) = A_j ,
        \qquad\text{so}\qquad
        \frac{1}{\delta_j} = A_j - \frac{2}{3A_j} + \calO\bigl(A_j^{-3}\bigr) ,
    \end{equation*}
    the substitution $\delta_j = A_j^{-1}(1+o(1))$ being justified by $\delta_j \to 0$ together
    with $\cot\delta_j \sim \delta_j^{-1}$ and $A_j - \delta_j \sim A_j$. Inverting the second
    display yields~\eqref{eqn: sinc expansion}.
\end{proof}

\paragraph{The activity fraction at one minimiser.}
Table~\ref{tab: sinc numerics} sweeps each parameter at the minimiser $y_3$, where
$\lambda^*_{\min} = 0.09133$ and $1/\lambda^*_{\min} = 10.95$, and reports the fraction of
iterations on which the constraint binds. For $\Rgrad$ the transition is a discontinuity and it
sits on~\eqref{eqn: rgrad threshold}. Activity is $1.000$ at $\overline{\mu} = 10.40$ and $0.000$
at $11.50$, and the run drops from six iterations to two across that step. For $\Rdfo$ the
transition is near $\zeta \approx 3.1$, above the provable bound of at most $2.19$ and below
$1/(2\lambda^*_{\min}) = 5.47$.

\begin{table}[htbp]
    \centering
    \caption{Fraction of iterations at which the trust region is active, on
        $f_2$ near the minimiser $y_3 \approx 10.9041$, where
        $\lambda^*_{\min} = 0.09133$ and $1/\lambda^*_{\min} = 10.95$. Started at
        $x_0 = y_3 + 0.05$ with the exact Hessian and the exact subproblem solution, and stopped
        at $\abs{g_k} \leq 10^{-9}$ or when the achieved reduction reaches the rounding level of
        $f_2$. For $\Rdfo$, $\Delta_0 = 10^{-3}$. For $\Rgrad$, $\Delta_0$ is irrelevant, which
        is itself a difference between the two rules.}
    \label{tab: sinc numerics}
    \small
    \begin{tabular}{rlrr@{\qquad\qquad}rlrr}
        \toprule
        \multicolumn{4}{c}{$\Rgrad$, exact threshold $1/\lambda^*_{\min} = 10.95$}
          & \multicolumn{4}{c}{$\Rdfo$, provable threshold at most $2.19$} \\
        \cmidrule(r){1-4}\cmidrule(l){5-8}
        $\overline{\mu}$ & status & iters & active
          & $\zeta$ & status & iters & active \\
        \midrule
        $0.5$   & stagnated & $295$ & $1.000$ & $0.10$  & stagnated & $1298$ & $1.000$ \\
        $2.0$   & stagnated & $77$  & $1.000$ & $0.50$  & stagnated & $265$  & $1.000$ \\
        $5.0$   & converged & $26$  & $1.000$ & $1.00$  & stagnated & $109$  & $1.000$ \\
        $10.0$  & converged & $7$   & $1.000$ & $2.00$  & stagnated & $67$   & $1.000$ \\
        $10.40$ & converged & $6$   & $1.000$ & $3.00$  & stagnated & $32$   & $1.000$ \\
        \midrule
        $11.50$ & converged & $2$   & $0.000$ & $3.25$  & converged & $11$   & $0.818$ \\
        $20.0$  & converged & $2$   & $0.000$ & $5.00$  & converged & $8$    & $0.750$ \\
        $43.8$  & converged & $2$   & $0.000$ & $10.95$ & converged & $7$    & $0.714$ \\
        $60.0$  & converged & $2$   & $0.000$ & $43.80$ & converged & $7$    & $0.714$ \\
        \bottomrule
    \end{tabular}
\end{table}

\paragraph{Locating the constant.}
Table~\ref{tab: sinc bounds} bisects for the transition at each of the four minimisers rather
than reading it off a sweep. Each run starts at $x_0 = y_j + 0.05$ with the exact Hessian, the
exact subproblem solution and the constants of Section~\ref{subsec: settings}. The critical points
are read at $240$ bits from the solve above rather than recomputed. A run is classified on
the last quarter of its iterations. It is \emph{eventually inactive} when the record carries an
index past which the constraint never binds again, \emph{eventually active} when the constraint
binds throughout that window, and \emph{neither} when it alternates to the end. The third class
is what lets the experiment falsify the dichotomy instead of assuming it. The bisection is geometric, the scale
being logarithmic, and it runs to three significant figures. It checks its predicate at both
brackets first, because a bisection on a predicate that does not separate them returns a number
with no meaning.

\begin{table}[htbp]
    \centering
    \caption{The located constants at the four minimisers of Table~\ref{tab: sinc values}.
        $\overline{\mu}^*$ is the bisected transition for capped $\Rgrad$ and $\zeta^*$ the
        lower edge of the window for $\Rdfo$ at $\Delta_0 = 10^{-2}$. $\kbar_{\mathrm{emp}}$ is
        the largest realised $\abs{s_k}/\abs{g_k}$ on a run above the transition. Scaling by
        $\lambda^*_j$ is what makes the four rows comparable.}
    \label{tab: sinc bounds}
    \small
    \begin{tabular}{rrrrrrr}
        \toprule
        $j$ & $1/\lambda^*_j$ & $\overline{\mu}^*$ & $\overline{\mu}^*\lambda^*_j$
          & $\zeta^*$ & $\zeta^*\lambda^*_j$ & $\kbar_{\mathrm{emp}}$ \\
        \midrule
        $1$ & $4.603339$  & $4.60347$ & $1.0000$ & $2.30167$ & $0.5000$ & $4.61$  \\
        $3$ & $10.949880$ & $10.9502$ & $1.0000$ & $5.47494$ & $0.5000$ & $10.96$ \\
        $5$ & $17.249766$ & $17.2502$ & $1.0000$ & $8.62488$ & $0.5000$ & $17.25$ \\
        $7$ & $23.540702$ & $23.5414$ & $1.0000$ & $11.7704$ & $0.5000$ & $23.56$ \\
        \bottomrule
    \end{tabular}
\end{table}

The $\Rgrad$ column settles Claim~\ref{claim: zeta} on this problem.
Inequality~\eqref{eqn: rgrad threshold} predicts $\overline{\mu}^* = 1/\lambda^*_j$, and the
bisection returns $\overline{\mu}^*\lambda^*_j = 1.0000$ at all four minimisers, over a range of
$1/\lambda^*_j$ spanning a factor of five. The classification is monotone in $\overline{\mu}$ at
all four, so the transition is a threshold and not the edge of a window. Read against $\kbar$,
the analysis carries eight times the constant it needs here.
The largest step-to-gradient ratio a run above the transition realises,
$\kbar_{\mathrm{emp}}$, agrees with $1/\lambda^*_j$ to three significant figures, so the ratio
$\kbar/\kbar_{\mathrm{emp}}$ is $7.99$, $7.99$, $8.00$ and $7.99$ down the four rows. The whole of
the factor $8$ is unused here, and it is worth separating the part of it the analysis could give
back. \PartII{} records that neither constant in $\kbar = 4/m$ is forced, and that the argument
runs with $\kbar$ approaching $2/\lambda^*_{\min}$ at the price of a later index. A factor of
four is therefore slack the analysis takes for convenience, and the factor of two that remains
is the gap between the sharpest constant the argument supports and the threshold measured
here.

The side conditions are necessary up to a constant rather than removable technical
          convenience. For $\Rgrad$ the exact threshold is $1/\lambda^*_j$, located to three
          figures at four minimisers, and $\kbar$ overstates it by a factor of eight, of which
          four is the constant \PartII{} takes for convenience and two is intrinsic to its
          argument. For
          $\Rdfo$ permanent activity is provable below $1/(5\overline{a})$ and observed up to
          $1/(2\lambda^*_j)$.

    The two rules fail differently, and only one of them fails as a function of its
          parameter. $\Rgrad$ recomputes $\Delta_k = \mu_k\norm{g_k}$ at every iteration and
          forgets $\Delta_0$ after one step, so $\overline{\mu}$ decides its activity alone.
          $\Rdfo$ propagates the radius multiplicatively, so a generous $\Delta_0$ can rescue an
          otherwise fatal $\zeta$ and a small one can defeat a generous $\zeta$.

    Three checks guard the numbers of this section, and one of them is a caveat rather than a
    confirmation. The runs do not carry the same budget on the two sides of the transition:
    below it they are allowed $4000$ iterations and above it $200$, so no iteration count is
    comparable across that boundary and none is compared here. The two confirmations are that
    the closed-form one-dimensional step agrees with the step the solver returns to
    $1.03 \times 10^{-11}$ over the sixteen traced runs, with no iteration at nonpositive
    curvature, and that the limit point of every run is reported per configuration rather than
    assumed.      

\paragraph{Verdict on Claim~\ref{claim: zeta}. Settled, in both regimes.}
The claim is that along convergent runs there is a problem-dependent threshold in $\zeta$ below
which $\Rdfo$ keeps the constraint active indefinitely and above which it eventually releases,
the same for $\overline{\mu}$ in $\Rgrad$, with the uncapped variant crossing unconditionally. On
P2 the threshold is located rather than inferred. The bisection of Table~\ref{tab: sinc bounds}
returns $\overline{\mu}^*\lambda^*_j = 1.0000$ at all four minimisers, over a range of
$1/\lambda^*_j$ spanning a factor of five, and the classification is monotone in
$\overline{\mu}$ at all four, so the transition is a threshold and not the edge of a window. In
the aggregate regime both sweeps are monotone, reliability running through $68$, $96$, $118$,
$145$, $153$ and $162$ problems in $\zeta$ and through $53$, $80$, $102$, $122$, $139$, $150$ and
$161$ in $\overline{\mu}$, with the uncapped variant at the top of the second. The designed
regime locates the constant and the aggregate regime shows the distinction is visible in what
the method costs.

\paragraph{Verdict on Claim~\ref{claim: no uniform cap}. Settled, and the deduction is
short.}
Capped $\Rgrad$ keeps the constraint active at every large iteration in the basin of $y_j$
whenever $\overline{\mu} < 1/\lambda^*_j$, by~\eqref{eqn: rgrad threshold}, and
Table~\ref{tab: sinc bounds} confirms the threshold at $\overline{\mu}^*\lambda^*_j = 1.0000$ at
four of them. The curvatures of Table~\ref{tab: sinc values} decrease along the minimisers,
$\lambda^*_j$ falling from $0.217$ at $y_1$ to $0.00314$ at $y_{101}$, and
Lemma~\ref{lem: sinc expansion} gives $y_j = (j+\tfrac12)\pi - \calO(j^{-1})$, so
$\lambda^*_j \to 0$ and $1/\lambda^*_j$ is unbounded along $j$. Given any cap $\overline{\mu}$,
however large, choose $j$ with $1/\lambda^*_j > \overline{\mu}$: the constraint is then active at
every sufficiently large iteration in that basin. The cap is what supplies the bound the
asymptotic theory assumes, and no value of it escapes the side condition.

\section{Escape from a flat region, and the role of the radius}
\label{subsec: flat}

Claim~\ref{claim: escape} asserts that the escape time from a flat region separates the anchors.
P4 of Section~\ref{subsec: P4} is the instrument. Its minimisers sit at distance $1$ from the
saddle whatever $\varepsilon$ is, so $\varepsilon$ tunes the escape rate without moving the
geometry, and~\eqref{eqn: escape time} gives the constant the escape time collapses onto. Every
run below uses the exact Hessian and truncated conjugate gradients, so the stall appears with
perfect curvature information.

\paragraph{The dynamics reduce to one dimension.}
The whole analysis lives on the line $x = 0$, and it does so because no subproblem solver used
here leaves it.

\begin{lemma}[The line $\{x = 0\}$ is invariant]
\label{lem: invariant line}
    Let $x_0 = 0$ in~\eqref{eqn: p4}. Then $x_k = 0$ for every $k$, under truncated conjugate
    gradients and under the exact Mor\'e--Sorensen solver alike.
\end{lemma}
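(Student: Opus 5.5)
The proof is an induction on $k$. The inductive step reduces to one structural fact: when the current iterate lies on the line, the gradient and the exact Hessian of $f_4$ there keep the subspace $E \defeq \operatorname{span}\{e_2\}$ invariant, and each subproblem solver returns a step inside $E$. Suppose $x_k = (0, y_k)$. Then $\nabla f_4 = (0,\ \varepsilon y_k(y_k^2-1))^\top \in E$ and $\nabla^2 f_4(0,y_k) = \operatorname{diag}(1,\ \varepsilon(3y_k^2-1))$. This Hessian is diagonal, so it maps $E$ into $E$ and $E^\perp = \operatorname{span}\{e_1\}$ into $E^\perp$. If $s_k \in E$, the trial point $x_k + s_k$ lies on the line. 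If the step is rejected, $x_{k+1} = x_k$. Either way $x_{k+1}$ has first coordinate $0$, and the radius update does not move the iterate. The whole argument therefore rests on showing $s_k \in E$.

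For truncated conjugate gradients (Algorithm~\ref{alg: SteihaugCG}) I would check by induction on the inner index that every iterate, residual and search direction lies in $E$. Each iterate is a combination of directions, and each direction is a residual plus a multiple of the previous direction. Each new residual is the old one plus a step-length multiple of $H_k$ applied to a direction, and $H_k E \subseteq E$. The base case is $r_0 = g_k \in E$, $p_0 = -g_k$, $s_0 = 0$. All three exits (negative curvature, boundary, residual test) return a combination of vectors in $E$, so the returned step lies in $E$. The degenerate case $g_k = 0$ returns $s_k = 0$ and is trivial. This is the Krylov statement $\calK_j(H_k, g_k) \subseteq E$.

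For the Mor\'e--Sorensen solver I would use Theorem~\ref{thm: ms conditions}. The solution satisfies $(H_k+\lambda I)s^\star = -g_k$ with $H_k + \lambda I \succeq 0$. Its first component solves $(1+\lambda)s^\star_1 = 0$. Since $\lambda \geq 0$, the coefficient $1+\lambda$ is at least $1$, so $s^\star_1 = 0$. The hard case cannot put mass in $e_1$: a null vector of $H_k+\lambda I$ would need $1+\lambda = 0$, which is impossible. I expect this step to be the main obstacle. It concerns uniqueness: the lemma must hold for the step the solver \emph{returns}, and when the $y$-curvature is negative and $g_k$ is small, the hard case may be resolved along the leftmost eigenvector. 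That eigenvector is $e_2$ whenever $\varepsilon(3y_k^2-1) < 1$, which holds because $\varepsilon < \tfrac12$ and $y$ stays in a bounded range along the run. So the added component still lies in $E$, but this needs checking against what the implementation does. If the \texttt{EigenPoint} wrapper is included, the same observation covers it, since its leftmost eigenvector is $e_2$ in that regime.
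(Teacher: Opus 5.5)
Your proof is correct and follows the paper's argument: the gradient lies along $e_2$, the Hessian is diagonal, an induction over the inner conjugate-gradient iterates keeps everything in $\operatorname{span}\{e_2\}$, and the exact solver reduces to a diagonal shifted system. Your component equation $(1+\lambda)s^\star_1 = 0$ is slightly more careful than the paper's $(H+\sigma I)^{-1}$, because it also covers the hard case. The closing worry about uniqueness is unnecessary and should be dropped. The necessity half of Theorem~\ref{thm: ms conditions} applies to every global solution, and a hard case needs $\lambda_{\min}(H_k) \leq 0$, which can only be the $e_2$ eigenvalue. So the claim that $\varepsilon < \tfrac12$ forces $\varepsilon(3y_k^2-1) < 1$ is neither needed nor guaranteed for $\varepsilon$ close to $\tfrac12$.
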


\begin{proof}
    At $x = 0$ the gradient is $\bigl(0,\, \varepsilon y(y^2-1)\bigr)^\top$, which lies along
    $e_2$, and the Hessian $\operatorname{diag}\bigl(1,\, \varepsilon(3y^2-1)\bigr)$ is diagonal,
    so $H e_2 \parallel e_2$. Conjugate gradients start at $s = 0$ with residual $-g \parallel
    e_2$ and first direction $-g$. If $s_j \parallel e_2$ and $d_j \parallel e_2$ then
    $H d_j \parallel e_2$, the step length is a scalar, and $s_{j+1}$ and $d_{j+1}$ again lie
    along $e_2$. Induction gives the claim. For the exact solver $(H + \sigma I)^{-1}$ is
    diagonal, so $s^* = -(H + \sigma I)^{-1} g \parallel e_2$.
\end{proof}

The lemma was also checked against the runs. Over the whole grid below, $\max_k \abs{x_k}$ is
$0$ in floating point and not merely small. On the line, and for $\abs{y} \leq 1$,
\begin{equation}
    \norm{g} = \varepsilon \abs{y}\,(1 - y^2) \;\leq\; \varepsilon\abs{y} ,
    \label{eqn: p4 gradient bound}
\end{equation}
which holds on the whole of $\abs{y} \leq 1$ and not only below the escape threshold.

\paragraph{The hypothesis sits on the radius and not on a named rule.}
Stating the condition on $\Delta_k$ rather than on a mechanism is what lets one bound cover both
criticality-anchored rules.

\begin{cond}{C.CA}[criticality-anchored radius]
\label{cond: rca}
    There are $C > 0$ and $k_0 \geq 0$ such that
    \begin{equation*}
        \Delta_k \;\leq\; C \max_{k_0 \leq j \leq k} \norm{g_j}
        \qquad \text{for all } k \geq k_0 .
    \end{equation*}
\end{cond}

The running maximum is what makes Condition~\ref{cond: rca} cover $\Rdfo$, whose radius is
controlled by $\norm{g_{k-1}}$ rather than by $\norm{g_k}$. Nothing else in the proof needs it.

\begin{proposition}[Escape time under a criticality-anchored radius]
\label{prop: escape}
    Let $x_0 = 0$ and $0 < \abs{y_{k_0}} \leq \tfrac12$, and suppose the run satisfies
    Condition~\ref{cond: rca} with constants $C$ and $k_0$. Write
    $k_{\mathrm{esc}} = \min\{k : \abs{y_k} > \tfrac12\}$. Then
    \begin{equation}
        k_{\mathrm{esc}} \;>\; k_0
          + \frac{\ln\bigl(1/(2\abs{y_{k_0}})\bigr)}{\ln(1 + C\varepsilon)} .
        \label{eqn: rca escape}
    \end{equation}
\end{proposition}

\begin{proof}
    Write $M_k = \max_{k_0 \leq j \leq k} \abs{y_j}$. Every trust-region step satisfies
    $\norm{s_k} \leq \Delta_k$, and by Lemma~\ref{lem: invariant line} the step lies along $e_2$,
    so
    \begin{equation*}
        \abs{y_{k+1}} \;\leq\; \abs{y_k} + \norm{s_k} \;\leq\; \abs{y_k} + \Delta_k ,
    \end{equation*}
    an inequality that also covers a rejected step, for which $y_{k+1} = y_k$.

    Let $k_0 \leq k < k_{\mathrm{esc}}$. Then $\abs{y_j} \leq \tfrac12$ for every $j \leq k$, so
    \eqref{eqn: p4 gradient bound} applies at each such $j$ and gives
    $\norm{g_j} \leq \varepsilon\abs{y_j} \leq \varepsilon M_k$. With Condition~\ref{cond: rca},
    \begin{equation*}
        \Delta_k \;\leq\; C \max_{k_0 \leq j \leq k} \norm{g_j} \;\leq\; C \varepsilon M_k ,
    \end{equation*}
    hence $\abs{y_{k+1}} \leq M_k (1 + C\varepsilon)$. Since $1 + C\varepsilon > 1$,
    \begin{equation*}
        M_{k+1} = \max\{M_k,\, \abs{y_{k+1}}\} \;\leq\; M_k (1 + C\varepsilon) ,
    \end{equation*}
    and by induction $M_k \leq \abs{y_{k_0}} (1 + C\varepsilon)^{k - k_0}$. At
    $k = k_{\mathrm{esc}}$ we have $\abs{y_k} > \tfrac12$, so
    $\abs{y_{k_0}} (1 + C\varepsilon)^{k_{\mathrm{esc}} - k_0} > \tfrac12$, and taking logarithms
    gives~\eqref{eqn: rca escape}.
\end{proof}

\begin{remark}
\label{rem: escape is a radius statement}
    The proof uses no property of the model or of the subproblem solution beyond
    $\norm{s_k} \leq \Delta_k$. That is what makes it a statement about the radius axis, and why
    it survives the exact Hessian, which every run below uses.
\end{remark}

Turned around, the bound says the stall is not a corner case to be tuned away.

\begin{corollary}[A flatness for every budget]
\label{cor: budget}
    Fix $\overline{\mu} > 0$, $0 < \abs{y_0} < \tfrac12$ and $K > 0$. If
    \begin{equation*}
        0 \;<\; \varepsilon \;<\; \frac{\ln\bigl(1/(2\abs{y_0})\bigr)}{\overline{\mu} K} ,
    \end{equation*}
    then any run satisfying Condition~\ref{cond: rca} with $C = \overline{\mu}$ and $k_0 = 0$ has
    $k_{\mathrm{esc}} > K$.
\end{corollary}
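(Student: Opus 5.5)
The corollary follows from Proposition~\ref{prop: escape} specialised to $k_0 = 0$ and $C = \overline{\mu}$. I would first check the hypotheses of the proposition. We have $x_0 = 0$ implicitly, as in the setting of P4 on the invariant line of Lemma~\ref{lem: invariant line}. We also have $0 < \abs{y_0} < \tfrac12 \leq \tfrac12$, and Condition~\ref{cond: rca} holds with these constants by assumption. The proposition then gives
\begin{equation*}
    k_{\mathrm{esc}} \;>\; \frac{\ln\bigl(1/(2\abs{y_0})\bigr)}{\ln(1 + \overline{\mu}\varepsilon)} .
\end{equation*}

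The remaining step is to compare this lower bound with $K$. I would use the elementary inequality $\ln(1+t) \leq t$ for $t > 0$, applied with $t = \overline{\mu}\varepsilon > 0$. Since the numerator $\ln\bigl(1/(2\abs{y_0})\bigr)$ is strictly positive (because $2\abs{y_0} < 1$), replacing the denominator by the larger quantity $\overline{\mu}\varepsilon$ can only decrease the fraction. This gives
\begin{equation*}
    k_{\mathrm{esc}} \;>\; \frac{\ln\bigl(1/(2\abs{y_0})\bigr)}{\overline{\mu}\varepsilon} \;>\; K ,
\end{equation*}
where the last inequality is the hypothesis on $\varepsilon$, rearranged by multiplying through by the positive quantity $K/\varepsilon$.

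There is no real obstacle here. The only points needing care are bookkeeping:
\begin{itemize}[leftmargin=1.6em, itemsep=2pt]
    \item The strict inequality $\abs{y_0} < \tfrac12$ is what makes the numerator positive, so the direction of the inequality is preserved when the denominator is enlarged.
    \item If the run never leaves $\abs{y} \leq \tfrac12$, then $k_{\mathrm{esc}} = \min \emptyset = +\infty$ and the conclusion holds trivially.
\end{itemize}
Finally, I would remark that the bound is uniform over models and subproblem solvers, by Remark~\ref{rem: escape is a radius statement}, so for every budget $K$ a sufficiently flat well defeats every criticality-anchored run with multiplier bound $\overline{\mu}$.
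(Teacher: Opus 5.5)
Your proposal is correct and follows essentially the same route as the paper: specialise Proposition~\ref{prop: escape} to $k_0 = 0$ and $C = \overline{\mu}$, then use $\ln(1+t) \leq t$ together with the hypothesis on $\varepsilon$. The paper orders the chain slightly differently, proving $\ln(1+\overline{\mu}\varepsilon) < \ln\bigl(1/(2\abs{y_0})\bigr)/K$ first and then substituting into~\eqref{eqn: rca escape}, but that is the same argument.
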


\begin{proof}
    The hypothesis gives $\overline{\mu}\varepsilon < \ln(1/(2\abs{y_0}))/K$, and $\ln(1+t) < t$
    for $t > 0$, so $\ln(1 + \overline{\mu}\varepsilon) < \ln(1/(2\abs{y_0}))/K$. Substituting
    into~\eqref{eqn: rca escape} gives $k_{\mathrm{esc}} > K$.
\end{proof}

\paragraph{Where the two mechanisms part.}
Capped $\Rgrad$ sets $\Delta_k = \mu_k\norm{g_k}$ with $\mu_k \leq \overline{\mu}$, so
Condition~\ref{cond: rca} holds at $k_0 = 0$ with $C = \overline{\mu}$, immediately and with no
entry condition. $\Rdfo$ is different, and the difference is not cosmetic. Its radius is tied to
the criticality measure only inside the band $\Delta_k \leq \zeta\norm{g_k}$.

\begin{proposition}[The \textnormal{$\Rdfo$} case, and its entry condition]
\label{prop: rdfo entry}
    Suppose there is $k_0$ with $\Delta_{k_0} \leq \zeta\norm{g_{k_0}}$. Then
    Condition~\ref{cond: rca} holds for $k > k_0$ with $C = \gthree\zeta$, and
    \eqref{eqn: rca escape} applies with that constant.

    If no such $k_0$ exists then $\Delta_k \to 0$ geometrically, $\norm{g_k} \to 0$, and on the
    invariant line $\abs{y_k} \to 0$, so the run converges to the saddle and never escapes.
\end{proposition}

\begin{proof}
    Write $S_k = \max_{k_0 \leq j \leq k}\norm{g_j}$. At $k_0$ the hypothesis gives
    $\Delta_{k_0} \leq \zeta\norm{g_{k_0}} \leq \gthree\zeta S_{k_0}$, since $\gthree > 1$.
    Suppose $\Delta_k \leq \gthree\zeta S_k$. If $\Delta_k \leq \zeta\norm{g_k}$ then the
    expansion branch gives
    $\Delta_{k+1} \leq \gthree\Delta_k \leq \gthree\zeta\norm{g_k} \leq \gthree\zeta S_k$.
    Otherwise $\Delta_{k+1} \leq \max\{\gone, \gtwo\}\Delta_k < \Delta_k \leq \gthree\zeta S_k$.
    Either way $\Delta_{k+1} \leq \gthree\zeta S_k \leq \gthree\zeta S_{k+1}$, which is
    Condition~\ref{cond: rca} with $C = \gthree\zeta$.

    For the second statement, if $\Delta_k > \zeta\norm{g_k}$ at every $k$ then both remaining
    branches contract, so $\Delta_k \leq \max\{\gone, \gtwo\}^{\,k}\Delta_0 \to 0$ and
    $\norm{g_k} < \Delta_k/\zeta \to 0$. On the invariant line
    $\norm{g_k} = \varepsilon\abs{y_k}(1 - y_k^2)$ with $1 - y_k^2 \geq \tfrac34$ below the
    threshold, so $\abs{y_k} \to 0$.
\end{proof}

That entry condition is where the two rules part, and Table~\ref{tab: flat dfo} measures it.

\paragraph{Why $\Rdelta$ is not slow.}
Proposition~\ref{prop: escape} is a statement about a radius tied to the criticality measure,
and on this family that measure is $\calO(\varepsilon)$. A rule whose radius carries no such
tie is not covered by it, and the gap is not one of constants. Settling that needs the step
itself, which on the invariant line is available in closed form.

\begin{lemma}[The step is the outward boundary step]
\label{lem: outward step}
    Let $x_0 = 0$ and $0 < y_k \leq \tfrac12$ in~\eqref{eqn: p4}, and let $s_k$ be produced by
    truncated conjugate gradients on the exact Hessian. Then $s_k = \Delta_k e_2$, so
    $\norm{s_k} = \Delta_k$, the trust-region constraint is active, and an accepted step gives
    $y_{k+1} = y_k + \Delta_k$. Moreover
    \begin{equation}
        1 - \rho_k \;\leq\; 8 y_k \Delta_k + 2\Delta_k^2 \;\leq\; 6\Delta_k
        \qquad \text{whenever } \Delta_k \leq 1 ,
        \label{eqn: rho bound}
    \end{equation}
    so $\Delta_k \leq \delta^\star \defeq \min\{1,\, (1-\eta_2)/6\}$ forces $\rho_k \geq \eta_2$,
    and $\rho_k$ does not depend on $\varepsilon$.
\end{lemma}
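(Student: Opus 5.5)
The plan is to exploit Lemma~\ref{lem: invariant line}, which confines everything to the line $x=0$. On that line the objective is the scalar quartic $\varphi(y) = \varepsilon\bigl(\tfrac14 y^4 - \tfrac12 y^2\bigr)$, and the step, the predicted reduction and the achieved reduction can all be written explicitly. The argument then splits into two parts. First, identify the step returned by Algorithm~\ref{alg: SteihaugCG}. Second, bound $1-\rho_k$ by an exact Taylor expansion of the quartic.

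\emph{The step.} For $0 < y_k \leq \tfrac12$ the gradient is $g_k = \varepsilon y_k(y_k^2-1)\,e_2$. This is nonzero and points along $-e_2$, so $-g_k$ is a positive multiple of $e_2$. The Hessian restricted to $e_2$ is $h_k = \varepsilon(3y_k^2-1)$. This is negative because $3y_k^2 \leq \tfrac34$. Truncated conjugate gradients starts at $s=0$ with first direction $d_0 = -g_k$. Its curvature is $d_0^\top H d_0 = h_k\norm{g_k}^2 < 0$, so the nonpositive-curvature exit of Algorithm~\ref{alg: SteihaugCG} fires on the first inner iteration. That exit returns the point where the ray $t\,d_0$, $t > 0$, meets the boundary, namely $s_k = \Delta_k e_2$.

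The step I expect to need the most care is checking the bookkeeping of Algorithm~\ref{alg: SteihaugCG}. I would confirm that no residual or forcing test can stop the inner loop before the curvature test at $j=0$; this holds because $\norm{r_0} = \norm{g_k} > 0$ and the forcing test compares against a positive multiple of it. I would also confirm that the curvature exit moves to the boundary along $+d_0$ rather than $-d_0$. Once this is settled, $\norm{s_k} = \Delta_k$, the constraint is active, and an accepted step gives $y_{k+1} = y_k + \Delta_k$.

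\emph{The ratio.} Since $\varphi$ is a quartic, its Taylor expansion at $y_k$ with increment $\Delta_k$ is exact. The third and fourth derivatives are $\varphi''' = 6\varepsilon y$ and $\varphi'''' = 6\varepsilon$, which gives
\[
    \text{ared}_k = \text{pred}_k - \varepsilon\bigl(y_k\Delta_k^3 + \tfrac14\Delta_k^4\bigr),
    \qquad
    \text{pred}_k = \varepsilon\Bigl(y_k(1-y_k^2)\Delta_k + \tfrac12(1-3y_k^2)\Delta_k^2\Bigr).
\]
Both terms of $\text{pred}_k$ are positive. Dropping the first and using $1-3y_k^2 \geq \tfrac14$ gives $\text{pred}_k \geq \varepsilon\Delta_k^2/8$. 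In particular $\rho_k$ is well defined, and
\[
    1-\rho_k = \frac{\varepsilon\bigl(y_k\Delta_k^3 + \tfrac14\Delta_k^4\bigr)}{\text{pred}_k} \leq 8y_k\Delta_k + 2\Delta_k^2 .
\]
The factor $\varepsilon$ cancels, so $\rho_k$ is independent of $\varepsilon$. With $y_k \leq \tfrac12$ and $\Delta_k \leq 1$ the right-hand side is at most $4\Delta_k + 2\Delta_k = 6\Delta_k$. Finally, $\Delta_k \leq \delta^\star$ gives $1-\rho_k \leq 1-\eta_2$, that is, $\rho_k \geq \eta_2$.
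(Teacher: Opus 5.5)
Your proposal is correct and follows the paper's proof essentially step for step. It uses the invariant line, notes that the first direction $-g_k$ has negative curvature so the boundary exit returns $\Delta_k e_2$, and then expands the quartic exactly with $\text{pred} \geq \tfrac18\varepsilon\Delta_k^2$. The bookkeeping you single out as delicate is lighter than you expect. In Algorithm~\ref{alg: SteihaugCG} the curvature test is the first check inside the loop, so the only earlier exit to rule out is $\norm{g} = 0$.
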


\begin{proof}
    By Lemma~\ref{lem: invariant line} the iterate stays on $x = 0$, where
    $g_k = \bigl(0,\, \varepsilon y_k(y_k^2-1)\bigr)^\top$ with $y_k^2 - 1 < 0$, so $-g_k$ points
    along $+e_2$. The first conjugate-gradient direction is $d_0 = -g_k$, with curvature
    $d_0^\top H_k d_0 = \varepsilon(3y_k^2-1)\norm{g_k}^2 < 0$ because
    $y_k \leq \tfrac12 < 1/\sqrt3$. The negative-curvature branch fires at the first inner
    iteration and returns the boundary point along $d_0$, which is $\Delta_k e_2$.

    For~\eqref{eqn: rho bound}, write $q(t) = \tfrac14 t^4 - \tfrac12 t^2$, so that
    $f_4(0,y;\varepsilon) = \varepsilon q(y)$ and, with $s = \Delta_k$,
    \begin{align*}
        \text{pred}
            &= \varepsilon\Bigl[ (y_k - y_k^3)\Delta_k
               + \tfrac12 (1 - 3y_k^2)\Delta_k^2 \Bigr] , \\
        \text{ared}
            &= \varepsilon\bigl[ q(y_k) - q(y_k + \Delta_k) \bigr]
             = \text{pred} - \varepsilon\bigl[ y_k\Delta_k^3 + \tfrac14 \Delta_k^4 \bigr] ,
    \end{align*}
    the second line being the exact Taylor expansion of the quartic, which terminates after four
    terms since $q'''(t) = 6t$ and $q''''(t) = 6$. Both carry the factor $\varepsilon$, so
    $\rho_k$ does not depend on it. Since
    $\text{pred} \geq \tfrac12\varepsilon(1-3y_k^2)\Delta_k^2 \geq \tfrac18\varepsilon\Delta_k^2$
    for $y_k \leq \tfrac12$,
    \begin{equation*}
        1 - \rho_k = \frac{y_k\Delta_k^3 + \tfrac14\Delta_k^4}{\text{pred}/\varepsilon}
        \;\leq\; 8 y_k \Delta_k + 2\Delta_k^2 ,
    \end{equation*}
    which is at most $4\Delta_k + 2\Delta_k^2 \leq 6\Delta_k$ for $y_k \leq \tfrac12$ and
    $\Delta_k \leq 1$.
\end{proof}

The acceptance threshold $\delta^\star$ is a constant of the thresholds alone. It does not move
with $\varepsilon$, and neither does the radius of a rule that reads only its own history. That
is the whole of the separation.

\begin{proposition}[$\Rdelta$ leaves the region in a bounded number of iterations]
\label{prop: rdelta escape}
    Let $x_0 = 0$, $0 < y_0 \leq \tfrac12$ and $\Delta_0 > 0$, run $\Rdelta$ with
    $\gone\gthree \leq 1$, and let $\delta^\star$ be as in Lemma~\ref{lem: outward step}. Write
    $m = \gone \min\{\Delta_0,\, \delta^\star\}$ and
    $\lceil t \rceil_{+} = \max\{0,\, \lceil t \rceil\}$. Then
    \begin{equation*}
        k_{\mathrm{esc}} \;\leq\;
            \left\lceil \frac{\ln(\delta^\star/\Delta_0)}{\ln \gone} \right\rceil_{+}
            \;+\; 2 \left\lceil \frac{1}{2m} \right\rceil ,
    \end{equation*}
    a bound that does not involve $\varepsilon$.
\end{proposition}

\begin{proof}
    An iteration with $\Delta_k > \delta^\star$ that is rejected has
    $\Delta_{k+1} = \gone\Delta_k$, so after at most
    $\lceil \ln(\delta^\star/\Delta_0)/\ln\gone \rceil_{+}$ such iterations the radius satisfies
    $\Delta_k \leq \delta^\star$; call the first such index $k_1$. One contraction from above
    $\delta^\star$ also gives $\Delta_{k_1} > \gone\delta^\star$, so $\Delta_{k_1} \geq m$ in
    either case.

    From $k_1$ on, Lemma~\ref{lem: outward step} makes every iteration with
    $\Delta_k \leq \delta^\star$ very successful, hence accepted, with
    $y_{k+1} = y_k + \Delta_k$ and $\Delta_{k+1} = \gthree\Delta_k$. If
    $\gthree\Delta_k > \delta^\star$ the next iteration may be rejected, and then
    $\Delta_{k+2} = \gone\gthree\Delta_k \leq \Delta_k \leq \delta^\star$, so it is accepted in
    turn; the same inequality $\gthree\Delta_k > \delta^\star$ gives
    $\Delta_{k+2} > \gone\delta^\star$. At least one iteration in every two is therefore
    accepted, and on each of them the radius is at least $m$. Since $y$ never decreases, it
    exceeds $\tfrac12$ after at most $2\lceil (\tfrac12 - y_0)/m \rceil \leq 2\lceil 1/(2m) \rceil$
    further iterations.
\end{proof}

The constants of Section~\ref{subsec: settings} satisfy $\gone\gthree = \tfrac12 \leq 1$. The
bound is crude, and it does not need to be sharp: every quantity in it is a threshold or a
factor of the rule, and $\varepsilon$ appears in none of them, while~\eqref{eqn: rca escape}
grows without bound as $\varepsilon \to 0$. That is the separation Claim~\ref{claim: escape}
asserts, and it holds by proof rather than by measurement.

\paragraph{The constant the escape time collapses onto.}
The bound~\eqref{eqn: rca escape} is one-sided. What the runs collapse onto is the continuous
limit of the recursion, $\mathrm{d}y/\mathrm{d}k = C\varepsilon\, y(1-y^2)$, and its integral is
available in closed form rather than only asymptotically.

\begin{lemma}[The escape integral]
\label{lem: escape integral}
    For $0 < y_0 < \tfrac12$,
    \begin{equation}
        \int_{y_0}^{1/2} \frac{\mathrm{d}y}{y\,(1-y^2)}
        \;=\; \frac12 \ln\!\left( \frac{1 - y_0^2}{3\, y_0^2} \right)
        \;=\; \ln\frac{1}{\sqrt3\, y_0} \;-\; \frac{y_0^2}{2} + \calO(y_0^4) .
        \label{eqn: escape integral}
    \end{equation}
\end{lemma}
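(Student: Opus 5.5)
The plan is a partial-fraction decomposition followed by a direct evaluation of the antiderivative and a one-term Taylor expansion. For $0<y<1$ we have
\begin{equation*}
    \frac{1}{y(1-y^2)} = \frac{1}{y} + \frac{y}{1-y^2} ,
\end{equation*}
which can be checked by putting the right-hand side over the common denominator $y(1-y^2)$, the numerator being $(1-y^2)+y^2=1$. Hence an antiderivative on $(0,1)$ is
\begin{equation*}
    F(y) = \ln y - \tfrac12\ln(1-y^2) = \tfrac12\ln\frac{y^2}{1-y^2} ,
\end{equation*}
which is the form $\ln\bigl(y/\sqrt{1-y^2}\bigr)$ already quoted before~\eqref{eqn: escape time}. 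The interval $[y_0,\tfrac12]$ lies inside $(0,1)$, so the integrand is continuous there and the fundamental theorem of calculus applies without any improper-integral care.

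Next I evaluate the endpoints. At the upper limit, $F(\tfrac12)=\tfrac12\ln\bigl((1/4)/(3/4)\bigr)=\tfrac12\ln\tfrac13$. Subtracting $F(y_0)=\tfrac12\ln\bigl(y_0^2/(1-y_0^2)\bigr)$ gives
\begin{equation*}
    F(\tfrac12)-F(y_0) = \tfrac12\ln\frac{1-y_0^2}{3y_0^2} ,
\end{equation*}
which is the first equality of~\eqref{eqn: escape integral}.

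For the second equality I split the logarithm as
\begin{equation*}
    \tfrac12\ln\frac{1-y_0^2}{3y_0^2} = -\tfrac12\ln 3 - \ln y_0 + \tfrac12\ln(1-y_0^2) ,
\end{equation*}
and recognise the first two terms as $\ln\bigl(1/(\sqrt3\,y_0)\bigr)$. It then remains to expand $\ln(1-u)=-u+\calO(u^2)$ at $u=y_0^2$. Since $u\le\tfrac14$, the remainder is bounded uniformly, and $\tfrac12\ln(1-y_0^2)=-y_0^2/2+\calO(y_0^4)$.

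There is no real obstacle. The one point to state carefully is that the $\calO(y_0^4)$ is meant as $y_0\to0$, with a uniform constant on $(0,\tfrac12)$ that follows from the alternating-series or Lagrange remainder bound on $\ln(1-u)$ for $u\in(0,\tfrac14]$. It is also worth noting that the leading constant matches Remark~\ref{rem: escape constant}.
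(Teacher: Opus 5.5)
Your proof is correct and matches the paper's argument step for step: the same partial-fraction split, the antiderivative $\ln y - \tfrac12\ln(1-y^2)$, the endpoint evaluation, and the one-term expansion of $\tfrac12\ln(1-y_0^2)$. One small slip: for $u>0$ every term of the series of $\ln(1-u)$ has the same sign, so the uniform constant comes from the Lagrange remainder (or a geometric bound on the tail), not from an alternating-series bound.
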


\begin{proof}
    Over the common denominator $y(1-y^2)$ the numerator of $1/y + y/(1-y^2)$ is
    $(1 - y^2) + y^2 = 1$, so
    \begin{equation*}
        \frac{1}{y\,(1-y^2)} \;=\; \frac1y + \frac{y}{1-y^2} .
    \end{equation*}
    Both terms integrate in closed form, the second by the substitution $u = 1 - y^2$,
    \begin{equation*}
        \int \frac{\mathrm{d}y}{y\,(1-y^2)}
        \;=\; \int \frac{\mathrm{d}y}{y} + \int \frac{y}{1-y^2}\,\mathrm{d}y
        \;=\; \ln y - \tfrac12 \ln(1 - y^2) + C
        \;=\; \ln \frac{y}{\sqrt{1-y^2}} + C .
    \end{equation*}
    Evaluating between the limits,
    \begin{align*}
        \int_{y_0}^{1/2} \frac{\mathrm{d}y}{y\,(1-y^2)}
        &\;=\; \Bigl[ \ln y - \tfrac12 \ln(1-y^2) \Bigr]_{y_0}^{1/2} \\
        &\;=\; \Bigl( \ln\tfrac12 - \tfrac12 \ln\tfrac34 \Bigr)
             - \Bigl( \ln y_0 - \tfrac12 \ln(1 - y_0^2) \Bigr) \\
        &\;=\; \frac12 \ln\!\left( \frac{1 - y_0^2}{3\, y_0^2} \right) ,
    \end{align*}
    the two leading constants cancelling to $-\tfrac12\ln 3$ because
    $\ln\tfrac12 - \tfrac12\ln\tfrac34 = -\ln 2 - \tfrac12\ln 3 + \ln 2$. Expanding
    $\tfrac12\ln(1-y_0^2) = -\tfrac12 y_0^2 + \calO(y_0^4)$ gives the second form.
\end{proof}

The limit form is the one~\eqref{eqn: escape time} carries and the one the measurements below are
read against. The correction it drops is $\tfrac12 y_0^2$, which is $5\times10^{-7}$ at
$y_0 = 10^{-3}$ and $5\times10^{-13}$ at $10^{-6}$. Both sit three orders of magnitude below the
residuals the runs leave, so the closed form settles that the gap between prediction and
measurement is not the truncation. 

\paragraph{The growth bound is never crossed, and it is tight.}
Table~\ref{tab: flat grid} sweeps the grid. Three of its columns need saying plainly, because two
are measured and one is not.

The escape time $k_{\mathrm{esc}}$ is measured. The run supplies it as the first index at which
$\abs{y_k}$ passes $\tfrac12$. \texttt{never} is a category rather than a number, recording that
the run exhausted $50\,000$ iterations inside the flat region. Nothing in the table
substitutes a predicted escape time for an observed one. Theory enters as the
bound~\eqref{eqn: rca escape} on that quantity and as the constant~\eqref{eqn: escape time} it
collapses onto, and both are compared against the measurement rather than replacing it.

The \emph{fitted rate} is measured. It is the least-squares slope of $\log\abs{y_k}$ on $k$ over
the escape phase, and $n$ is the number of iterations in that fit. On the invariant line the phase
is the whole pre-escape run, so $n$ equals $k_{\mathrm{esc}}$ where the run escapes and the full
budget of $50\,001$ where it does not.

The column $\ln(1+\overline{\mu}\varepsilon)$ is not measured. It is the bound that
\eqref{eqn: rca escape} places on that slope, since the same argument gives the envelope
$\abs{y_k} \leq \abs{y_{k_0}}(1+\overline{\mu}\varepsilon)^{k-k_0}$. The ratio of the two is
therefore a one-sided test, and the fitted rate has to fall below one.

\begin{table}[htbp]
    \centering
    \footnotesize
    \caption{Capped $\Rgrad$ on P4, exact Hessian, truncated conjugate gradients, started at
        $x_0 = 0$ with a first-order tolerance of $10^{-14}$ and a budget of $50\,000$
        iterations. Rows are grouped by $\overline{\mu}\varepsilon$. $k_{\mathrm{esc}}$, the
        fitted rate and $n$ are measured. $\ln(1+\overline{\mu}\varepsilon)$ and the ratio are
        derived from them. Fifteen of the forty cells are shown, chosen to
        span the product and to include one row of each kind.}
    \label{tab: flat grid}
    \begin{tabular}{rrrrrrrrr}
        \toprule
        $y_0$ & $\overline{\mu}$ & $\varepsilon$ & $\overline{\mu}\varepsilon$
          & $k_{\mathrm{esc}}$ & $n$ & fitted rate & $\ln(1+\overline{\mu}\varepsilon)$
          & ratio \\
        \midrule
        $10^{-3}$ & $0.01$ & $10^{-2}$ & $10^{-4}$ & never  & $50001$ & $0.000100$ & $0.000100$ & $0.9989$ \\
        $10^{-6}$ & $0.01$ & $10^{-2}$ & $10^{-4}$ & never  & $50001$ & $0.000100$ & $0.000100$ & $0.9989$ \\
        \midrule
        $10^{-3}$ & $0.01$ & $10^{-1}$ & $10^{-3}$ & $6362$ & $6362$  & $0.000990$ & $0.001000$ & $0.9904$ \\
        $10^{-3}$ & $0.1$  & $10^{-2}$ & $10^{-3}$ & $6362$ & $6362$  & $0.000990$ & $0.001000$ & $0.9904$ \\
        $10^{-3}$ & $1$    & $10^{-3}$ & $10^{-3}$ & $6362$ & $6362$  & $0.000990$ & $0.001000$ & $0.9904$ \\
        $10^{-3}$ & $10$   & $10^{-4}$ & $10^{-3}$ & $6364$ & $6364$  & $0.000990$ & $0.001000$ & $0.9904$ \\
        $10^{-6}$ & $1$    & $10^{-3}$ & $10^{-3}$ & $13273$ & $13273$ & $0.000997$ & $0.001000$ & $0.9976$ \\
        \midrule
        $10^{-3}$ & $0.1$  & $10^{-1}$ & $10^{-2}$ & $639$  & $639$   & $0.009857$ & $0.009950$ & $0.9906$ \\
        $10^{-3}$ & $10$   & $10^{-3}$ & $10^{-2}$ & $642$  & $642$   & $0.009856$ & $0.009950$ & $0.9905$ \\
        $10^{-6}$ & $1$    & $10^{-2}$ & $10^{-2}$ & $1334$ & $1334$  & $0.009926$ & $0.009950$ & $0.9976$ \\
        \midrule
        $10^{-3}$ & $1$    & $10^{-1}$ & $10^{-1}$ & $67$   & $67$    & $0.094540$ & $0.095310$ & $0.9919$ \\
        $10^{-3}$ & $10$   & $10^{-2}$ & $10^{-1}$ & $69$   & $69$    & $0.094059$ & $0.095310$ & $0.9869$ \\
        $10^{-6}$ & $10$   & $10^{-2}$ & $10^{-1}$ & $142$  & $142$   & $0.094980$ & $0.095310$ & $0.9965$ \\
        \midrule
        $10^{-3}$ & $10$   & $10^{-1}$ & $1$       & $12$   & $12$    & $0.587519$ & $0.693147$ & $0.8476$ \\
        $10^{-6}$ & $10$   & $10^{-1}$ & $1$       & $22$   & $22$    & $0.658274$ & $0.693147$ & $0.9497$ \\
        \bottomrule
    \end{tabular}
\end{table}

Over the thirty-seven cells of the full grid that admit a fit, not one has a ratio above $1$.
Across the thirty-five with $\overline{\mu}\varepsilon \leq 10^{-1}$ the ratio runs from $0.9869$
to $1.0000$, so the bound is within $1.4$ per cent of the observed rate across four decades of the
product. It loosens only in the two cells at $\overline{\mu}\varepsilon = 1$, where
$\ln(1+\overline{\mu}\varepsilon) = 0.693147$ against a product of $1$ and the linearisation the
continuous limit rests on has stopped applying.

The shortfall in the other thirty-five is not slack in the bound. The instantaneous slope on the
line is $\ln\bigl(1+\overline{\mu}\varepsilon(1-y_k^2)\bigr)$, which falls from
$\ln(1+\overline{\mu}\varepsilon)$ at $y_k = 0$ to $\ln(1+\tfrac34\overline{\mu}\varepsilon)$ at
the escape threshold. A least-squares fit over the whole phase therefore returns a value a few per
cent below the bound by construction, and the deficit is a property of the fit window. The
tightest ratios in the table belong to the two rows that never escape, which spend the whole
budget near $y_k = 0$ where the two agree.

\paragraph{The escape time collapses onto one constant.}
The last column of the archive multiplies $k_{\mathrm{esc}}$ by
$\overline{\mu}\varepsilon$. At $\overline{\mu}\varepsilon = 10^{-3}$ the four cells of
Table~\ref{tab: flat grid} at $y_0 = 10^{-3}$ give $6.3620$, $6.3620$, $6.3620$ and $6.3640$, whatever
$\overline{\mu}$ and $\varepsilon$ separately are, so halving the cap does exactly what halving
the flatness does. The limit of that product is
$\ln\bigl(1/(\sqrt3\,y_0)\bigr)$ by~\eqref{eqn: escape time}, which is $6.358449$ at
$y_0 = 10^{-3}$ and $13.266204$ at $10^{-6}$. Restricting to
$\overline{\mu}\varepsilon \leq 10^{-3}$, the mean measured products are $6.3625$ and $13.2737$,
so the residuals against the constant are $0.0041$ and $0.0075$. Against $\ln(1/y_0)$ they are
$0.5453$ and $0.5418$, larger by factors of $133$ and $72$. That settles the choice
Remark~\ref{rem: escape constant} sets out, in the direction the remark states.

\paragraph{The second-order measure is worth a factor of nine.}
Anchoring the radius to $\tau_k = \max\{\norm{g_k}, -\lambda_k^-\}$ rather than to $\norm{g_k}$
changes the flow the escape follows. On the line $-\lambda^- = \varepsilon(1-3y^2)$, which is
positive up to $y = 1/\sqrt3$ and therefore throughout the escape range, and the two terms of
$\tau$ cross at the root $y^\star = 0.460811$ of $y^3 - 3y^2 - y + 1 = 0$. Below $y^\star$ the
curvature term decides and above it the gradient term does. The continuous limit then replaces
$\int_{y_0}^{1/2} \mathrm{d}y/(y(1-y^2))$ by $\int_{y_0}^{1/2} \mathrm{d}y/\max\{1-3y^2,\, y(1-y^2)\}$, and
the second integral barely moves with $y_0$ because its integrand tends to $1$ rather than
diverging. Quadrature validated against the closed form of the first gives $0.736439$ and
$0.737438$ at the two starting heights. Table~\ref{tab: flat measures} sets the measured ratios
beside the predicted ones. The predicted ratio of escape times is therefore $8.6341$
at $y_0 = 10^{-3}$ and $17.9896$ at $10^{-6}$.

\begin{table}[htbp]
    \centering
    \small
    \caption{Escape times under the two measures, on the cells that escaped under both. The
        fitted ratio is a quotient of two integers, so $1/k(\tau)$ is the finest difference it
        can express, and that resolution is a column rather than a filter.}
    \label{tab: flat measures}
    \begin{tabular}{rrrrrrrr}
        \toprule
        $y_0$ & $\overline{\mu}$ & $\varepsilon$ & $k(\norm{g})$ & $k(\tau)$
          & fitted & predicted & resolution \\
        \midrule
        $10^{-3}$ & $0.01$ & $10^{-1}$ & $6362$ & $736$ & $8.6440$ & $8.6341$ & $0.14$\% \\
        $10^{-3}$ & $0.1$  & $10^{-2}$ & $6362$ & $736$ & $8.6440$ & $8.6341$ & $0.14$\% \\
        $10^{-3}$ & $1$    & $10^{-2}$ & $639$  & $74$  & $8.6351$ & $8.6341$ & $1.35$\% \\
        $10^{-3}$ & $10$   & $10^{-4}$ & $6364$ & $739$ & $8.6116$ & $8.6341$ & $0.14$\% \\
        $10^{-3}$ & $10$   & $10^{-1}$ & $12$   & $3$   & $4.0000$ & $8.6341$ & $33.33$\% \\
        \midrule
        $10^{-6}$ & $0.01$ & $10^{-1}$ & $13273$ & $737$ & $18.0095$ & $17.9896$ & $0.14$\% \\
        $10^{-6}$ & $1$    & $10^{-2}$ & $1334$  & $74$  & $18.0270$ & $17.9896$ & $1.35$\% \\
        $10^{-6}$ & $10$   & $10^{-4}$ & $13276$ & $740$ & $17.9405$ & $17.9896$ & $0.14$\% \\
        $10^{-6}$ & $10$   & $10^{-1}$ & $22$    & $3$   & $7.3333$  & $17.9896$ & $33.33$\% \\
        \bottomrule
    \end{tabular}
\end{table}

Where the resolution allows a comparison the prediction holds to under half a per cent. Every row
at $0.14$ per cent resolution agrees to within $0.05$. The rows that miss badly are the ones where
$k(\tau)$ is three iterations, a quotient of small integers being unable to express the answer.
That is the second-order repair measured on the radius axis. It is a different thing from the
second-order stopping test, which acts on when a run stops rather than on how far it steps.

\paragraph{$\Rdfo$ has to enter the band first.}
Table~\ref{tab: flat dfo} sweeps $\Delta_0$ at fixed $\zeta$ and reports the entry index computed
from the trace. Nine of twelve cells enter, at $k_0$ between $0$ and $22$, and every one of them
then stalls in proportion to $\zeta\varepsilon$. Started inside the band the collapse product
sits near $12.1$ against $\gthree\zeta\varepsilon$. The ratio of fitted rate to
$\ln(1+\gthree\zeta\varepsilon)$ sits near $0.52$ across the whole $\zeta$ grid, against near
$0.99$ for $\Rgrad$. The bound holds and is loose by a uniform factor of about two, because
$\gthree$ is the worst case of an expansion the next iteration undoes. Rescaling by $\zeta$ alone
puts the product at $6.05$, next to the $\Rgrad$ value of $6.36$.

\begin{table}[htbp]
    \centering
    \small
    \caption{Which branch of the $\Rdfo$ statement each cell falls in, at $\varepsilon = 10^{-2}$
        and $y_0 = 10^{-3}$ with $\Delta_0$ swept. The entry index $k_0$ is the first $k$ with
        $\Delta_k \leq \zeta\norm{g_k}$, computed from the trace and not assumed.}
    \label{tab: flat dfo}
    \begin{tabular}{rlrlrr}
        \toprule
        $\zeta$ & $\Delta_0$ & $k_0$ & $k_{\mathrm{esc}}$ & status & $\abs{y_{\mathrm{end}}}$ \\
        \midrule
        $0.01$ & $10^{-7}$ & $0$          & never  & \texttt{max\_iter} & $0.18895$ \\
        $0.01$ & $10^{-3}$ & $12$         & never  & \texttt{max\_iter} & $0.48241$ \\
        $0.01$ & $10^{0}$  & never enters & $1$    & \texttt{stalled}   & $1.00000$ \\
        \midrule
        $1$    & $10^{-5}$ & $0$          & $610$  & \texttt{stalled}   & $1.00000$ \\
        $1$    & $10^{-3}$ & $6$          & $524$  & \texttt{stalled}   & $1.00000$ \\
        $1$    & $10^{0}$  & never enters & $1$    & \texttt{stalled}   & $1.00000$ \\
        \midrule
        $10$   & $10^{-4}$ & $0$          & $68$   & \texttt{stalled}   & $1.00000$ \\
        $10$   & $10^{-3}$ & $3$          & $60$   & \texttt{stalled}   & $1.00000$ \\
        $10$   & $10^{0}$  & $22$         & $1$    & \texttt{stalled}   & $1.00000$ \\
        \bottomrule
    \end{tabular}
\end{table}

The second branch of that statement is not occupied on this scan. It predicts that a run which
never enters the band has $\Delta_k \to 0$ geometrically and converges to the saddle. Three cells
never enter, and none of the three converges to the saddle. All three escape at $k = 1$ instead,
the first step from $\Delta_0 = 1$ being enormous beside
$\norm{g_0} \approx \varepsilon y_0 = 10^{-5}$. The branch needs a run of unsuccessful iterations
that does not occur here. We report the absence rather than hunting for a configuration that
produces the predicted picture.

\paragraph{The stopping tolerance draws the boundary between the outcomes.}
The runs above use $\epsilon = 10^{-14}$, which lets the dynamics run so that
$k_{\mathrm{esc}}$ and the rate are measurable at all. At $\epsilon = 10^{-5}$ the same forty
cells behave differently, and the difference is the stopping test rather than the mechanism.
Table~\ref{tab: flat outcomes} gives the composition on both sides of it.

\begin{table}[htbp]
    \centering
    \small
    \caption{Outcome composition for capped $\Rgrad$ on the invariant line, twenty cells per row.
        The two regimes differ in the first-order tolerance and in nothing else.}
    \label{tab: flat outcomes}
    \begin{tabular}{llrrrr}
        \toprule
        regime & $y_0$ & false convergence & fast & slow & never \\
        \midrule
        $\epsilon = 10^{-14}$ & $10^{-3}$ & $0$  & $0$ & $10$ & $10$ \\
        $\epsilon = 10^{-14}$ & $10^{-6}$ & $0$  & $0$ & $10$ & $10$ \\
        \midrule
        $\epsilon = 10^{-5}$  & $10^{-3}$ & $16$ & $0$ & $4$  & $0$ \\
        $\epsilon = 10^{-5}$  & $10^{-6}$ & $20$ & $0$ & $0$  & $0$ \\
        \bottomrule
    \end{tabular}
\end{table}

Near the
saddle $\norm{g_k} \approx \varepsilon\abs{y_k}$, so pushing $\varepsilon y_0$ below the tolerance
defeats the test at the first iteration and never later. A grid whose cells straddle that line
reports an outcome a change of one part in a thousand reverses. The two tolerance regimes are two
experiments and not two views of one, and they are never pooled.

Turning the second-order test on separates the two uses of curvature information once more.
Over the $136$ cells of the wider $\Rgrad$ and $\Rdfo$ grid that report false convergence at
$\epsilon = 10^{-5}$, rerunning with $\epsilon_H = 10^{-6}$ leaves none of them false-convergent.
Sixty-eight become fast escapes and twenty-nine slow ones. The remaining thirty-nine do not escape
within the budget, so in those cells refusing to stop at the saddle converts a wrong answer into
one that does not finish.

\paragraph{What this settles, and what it leaves open.}
Three statements now rest on measurement. The growth envelope is never crossed, on any run at any
$\varepsilon$. The hypothesis behind it holds with equality on this family, the radius being
exactly $\overline{\mu}\norm{g_k}$ at every iteration, so the bound is not resting on slack. And
the bound is tight to $1.4$ per cent over four decades, degrading only where
$\ln(1+t) \approx t$ stops holding.

Claim~\ref{claim: escape} is settled on both sides, though not in the same currency. Its first
half asserts that for every cap there is a flatness making the escape arbitrarily slow,
and~\eqref{eqn: rca escape} with the grid above establishes that, by measurement. Its second half
asserts that $\Rdelta$ leaves the region from the same starting point in a number of iterations
bounded independently of $\varepsilon$, and Proposition~\ref{prop: rdelta escape} establishes
that, by proof. No run in this campaign measures $\Rdelta$ on this family: the comparator appears
in the archive as a curve on one figure and carries no tabulated escape times. The separation the
claim is about is therefore proved and not measured, and a tabulated run of $\Rdelta$ at the same
four values of $\varepsilon$ is what would close the gap between the two halves.

\paragraph{Verdict on Claim~\ref{claim: order}. Partially settled, and bounded.}
The claim is that a first-order anchor admits the failures of Claims~\ref{claim: cauchy}
and~\ref{claim: bad limits} and that the second-order anchor removes them on the same problems
and from the same starting points. Two measurements support it. On P4, anchoring to
$\tau_k = \max\{\norm{g_k},\, -\lambda_k^-\}$ is worth a factor of nine in escape time, predicted
at $8.6341$ and measured at $8.6440$. And of the $136$ cells of the wider grid that report false
convergence at $\epsilon = 10^{-5}$, rerunning with $\epsilon_H = 10^{-6}$ leaves none
false-convergent. What is not measured is the claim as stated, since the two failure
configurations are not rerun under $\omega_k = \tau_k$. The repair also has a boundary, and P3
exhibits it: at the nonsingular saddle $\tau = \tfrac14$ and a $\tau$-anchored rule keeps the
radius away from zero, but at the origin the Hessian is $\operatorname{diag}(0, \tfrac12)$, both
measures vanish, and the point satisfies the second-order necessary conditions without being a
local minimiser. Section~\ref{sec: second order cutest} supplies the aggregate half of the claim,
which the designed problems cannot: where the repair is available it is bought with Hessian
evaluations and not with reliability.

\section{The cost of a second-order certificate}
\label{sec: second order cutest}

Sections~\ref{subsec: P3} and~\ref{subsec: flat} show what anchoring to
$\tau_k = \max\{\norm{g_k}, -\lambda_k^-\}$ repairs, on problems built so that the repair is
visible. They say nothing about what it costs. This section supplies the aggregate regime of
Section~\ref{subsec: two regimes} for Claim~\ref{claim: order}: whether a distinction that a
designed problem exhibits is visible in what a user pays for it.

\subsection{What was run}
\label{subsec: so setup}

\paragraph{Twelve configurations, six mechanisms in two arms.}
Each of the six mechanisms of Table~\ref{tab: roster axes} runs twice on the same problems, with
\texttt{ExactHessian} as the model throughout, so that the model axis is fixed and only the
second-order machinery varies. The two runs of a mechanism are called its \emph{arms}.

The first-order arm is the configuration of Table~\ref{tab: comparison}. The subproblem is solved
by truncated conjugate gradients, and a rule that reads a criticality measure reads the
first-order one, $\omega_k = \norm{g_k}$.

The second-order arm changes two things and nothing else. The subproblem solver becomes the
eigenpoint wrapper of Section~\ref{subsec: subsolvers}, which supplies the leftmost eigenvalue
$\lambda_k^-$ of the model Hessian alongside the step. And a rule that reads a criticality
measure is wrapped in \texttt{SecondOrder}, so that the quantity placed in the two slots of
Section~\ref{subsec: driver} becomes $\tau_k$ instead of $\norm{g_k}$. The update subroutine is
untouched; only what is handed to it changes, which is the whole content of a second-order
variant.

Three mechanisms consult a criticality measure and three do not, so only three of the six have a
rule to wrap. For $\Rdelta$, $\Rstep$ and $\Rrtr$ the second arm is the eigenpoint solver and
nothing more. That asymmetry is not a gap in the design. It is the anchoring axis of \PartII{}
deciding which mechanisms have a second-order variant at all, and Table~\ref{tab: so
configurations} is the anchoring axis of Table~\ref{tab: roster axes} read as an experimental
design.

\begin{table}[htbp]
    \centering
    \small
    \begin{tabular}{@{}llccl@{}}
        \toprule
        Mechanism & Anchor & Reads $\omega_k$ & First-order arm & Second-order arm \\
        \midrule
        $\Rdelta$ & previous radius & no  & CG & eigenpoint \\
        $\Rstep$  & step            & no  & CG & eigenpoint \\
        $\Rrtr$   & retrospective   & no  & CG & eigenpoint \\
        \addlinespace[3pt]
        $\Rdfo$   & criticality     & yes & CG, $\omega_k = \norm{g_k}$
                                          & eigenpoint, $\omega_k = \tau_k$ \\
        $\Rgrad$  & criticality     & yes & CG, $\omega_k = \norm{g_k}$
                                          & eigenpoint, $\omega_k = \tau_k$ \\
        $\Rgrtr$  & criticality     & yes & CG, $\omega_k = \norm{g_k}$
                                          & eigenpoint, $\omega_k = \tau_k$ \\
        \bottomrule
    \end{tabular}
    \caption{The twelve configurations. A mechanism that reads no criticality measure has nothing
    to anchor to $\tau_k$, so its second arm differs from its first only in the subproblem
    solver. The column \emph{reads $\omega_k$} is the criticality column of
    Table~\ref{tab: roster axes} and it is what decides the last column.}
    \label{tab: so configurations}
\end{table}

\paragraph{Constants.}
Every constant is the one Table~\ref{tab: settings} fixes, and no rule is tuned separately.
The three branch multipliers are $\gone = 0.25$, $\gtwo = 0.5$ and $\gthree = 2$ for all six
mechanisms, so the factor axis is held and only the anchor varies. $\Rdfo$ runs at $\zeta = 100$
and $\Rgrad$ at $\mu_0 = 1$ uncapped, the settings Sections~\ref{subsec: zeta sweep}
and~\ref{subsec: mu sweep} selected, and $\Rrtr$ and $\Rgrtr$ carry the retrospective thresholds
$\widetilde\eta_1 = 0.05$ and $\widetilde\eta_2 = 0.9$. Four constants belong to this experiment
and to no other: the curvature tolerance $\epsilon_H = 10^{-6}$ against the gradient tolerance
$\epsilon = 10^{-5}$ of Table~\ref{tab: settings}, the $40$ Lanczos vectors the eigenpoint
wrapper is allowed, and the dimension $200$ below which it forms the Hessian and factorises it
densely instead. Budgets are $10\,000$ iterations and $120$ seconds.

\paragraph{The problem set.}
Every unconstrained CUTEst problem with $2 \leq n \leq 1000$, which is the $196$ of
Section~\ref{subsec: cutest}. Of those, $185$ have $n \leq 200$ and therefore have $\lambda_k^-$
formed rather than estimated. That subset carries the second-order comparison, for the reason
Section~\ref{subsec: so limits} gives.

\paragraph{Two passes, and scoring after the fact.}
The two arms stop on different tests. Profiling them on their own statuses would compare the cost
of an easy task with the cost of a harder one, so every run is scored afterwards, by one
criterion applied by the same code to every run, and never by the solver's status.

A run's trace carries no per-iteration evaluation counts, so the cost a run had already spent
when it first met a criterion cannot be recovered from the archived run. Each configuration is
therefore executed twice. Pass~\texttt{:g} stops at the first iterate with
$\norm{g_k} \leq \epsilon$; pass~\texttt{:h} stops at the first that also has
$\lambda_{\min} \geq -\epsilon_H$. Both passes keep their arm's own curvature setting, so the
second-order arm pays for its estimate in both and the overhead is never hidden. Twelve
configurations, $196$ problems and two passes are $4\,704$ runs.

Two panels are read together and neither substitutes for the other. The \emph{overhead panel}
scores both arms on the first-order criterion over all $196$ problems, and answers what carrying
the machinery costs when the goal is only a critical point. The \emph{certificate panel} scores
both arms on the second-order criterion over the $185$, and answers what the machinery returns.

\subsection{Verifying the instrument}
\label{subsec: so checks}

Four checks were run before the campaign, because three of the quantities above are implemented
rather than assumed.

The criticality measure the package computes was compared against the definition
of~\eqref{eqn: app criticality measure} on twenty pairs $(\norm{g_k}, \lambda_k^-)$ spanning the
sign changes and the degenerate cases, $\norm{g_k} = 0$ and $\lambda_k^- = 0$ among them. The
worst absolute difference is zero. The measure the experiment anchors to is therefore the measure
\PartII{} defines, and not a variant of it.

The dense branch of \texttt{ExactHessian} was compared against the true Hessian on five problems
of dimension $2$ to $200$. The largest entrywise difference is zero, and so is the largest
difference between the leftmost eigenvalue the package reports and the one a dense factorisation
returns. Where the model is called exact in this section it is exact.

The classification in the third column of Table~\ref{tab: so configurations} was read off the
package rather than off the paper. Three mechanisms report that they consult a criticality
measure and three report that they do not, and the three that do are the three
Section~\ref{subsec: order rules} calls criticality-anchored.

The fourth check is a control. A positive definite model has $\lambda_k^- > 0$, so
$\tau_k = \norm{g_k}$ at every iteration and the second-order variant of a rule \emph{is} its
first-order variant. Running both arms of the three criticality-anchored mechanisms against an
\texttt{LBFGSModel} on two problems gives six pairs of runs with identical iteration counts and
identical trajectories, and none differing. The wrapper changes nothing where the theory says it
should change nothing, which is the only place a null result is informative.

\subsection{What the certificate costs}
\label{subsec: so overhead}

The overhead panel is Table~\ref{tab: so overhead} and Figures~\ref{fig: so overhead iters}
and~\ref{fig: so overhead prods}. Both arms
are scored on $\norm{g_k} \leq \epsilon$ over the $196$ problems, so the task is the one the
first-order arm was built for and the second-order arm is carrying weight it does not need.

The reliability does not move. The second-order arm solves $167$, $136$, $168$, $160$, $161$
and~$160$ problems for $\Rdelta$, $\Rstep$, $\Rrtr$, $\Rdfo$, $\Rgrad$ and $\Rgrtr$, against
$165$, $138$, $169$, $162$, $163$ and~$160$ for the first-order arm. The difference is at most
two problems in either direction and zero for $\Rgrtr$. Figure~\ref{fig: so overhead iters} says the
same thing at every budget rather than only at the asymptote: the two curves of a mechanism run
together across the whole range of $\tau$.

Before reading the cost, one number establishes that the first-order arm here is the
configuration the rest of the \thisdoc{} reports. On the five mechanisms this experiment shares with
Table~\ref{tab: comparison} it returns the same solve rate on every one, $169$ for $\Rrtr$, $165$
for $\Rdelta$, $163$ for $\Rgrad$, $162$ for $\Rdfo$ and $138$ for $\Rstep$. The two archives
were built at different commits of the package and share no code path beyond it, so the agreement
on all five is the evidence rather than a shared provenance.

The cost is paid in curvature and it falls unevenly. The first-order arm evaluates no Hessian at
all, reaching its model through Hessian-vector products alone, so the Hessian-evaluation column
of Table~\ref{tab: so overhead} is the entire price of the certificate. The product column
separates the mechanisms and the separation follows the anchor. $\Rgrad$ carries the machinery
for $1.05$ times its own first-order product count and $\Rgrtr$ for $1.08$, against $1.71$ for
$\Rdelta$, $1.91$ for $\Rrtr$ and $2.26$ for $\Rdfo$. A gradient-scaled rule already computes a
criticality measure at every iteration, and the eigenvalue rides along with the work it was doing
anyway; a rule anchored to the previous radius or to the step computes one only because the
certificate demands it, and pays the whole of it.

\begin{table}[htbp]
    \centering
    \small
    \begin{tabular}{@{}l rr r rrr r r@{}}
        \toprule
        & \multicolumn{2}{c}{Problems solved}
        & & \multicolumn{3}{c}{Hessian-vector products}
        & & Hessian evals \\
        \cmidrule(lr){2-3} \cmidrule(lr){5-7} \cmidrule(l){9-9}
        Mechanism & 1st arm & 2nd arm & & 1st arm & 2nd arm & ratio & & 2nd arm \\
        \midrule
        $\Rgrad$  & $163$ & $161$ & & $664\,489$    & $697\,671$    & $1.05$ & & $64\,300$  \\
        $\Rgrtr$  & $160$ & $160$ & & $705\,225$    & $763\,614$    & $1.08$ & & $59\,796$  \\
        $\Rstep$  & $138$ & $136$ & & $1\,692\,614$ & $1\,839\,677$ & $1.09$ & & $397\,159$ \\
        $\Rdelta$ & $165$ & $167$ & & $823\,786$    & $1\,405\,921$ & $1.71$ & & $53\,085$  \\
        $\Rrtr$   & $169$ & $168$ & & $641\,934$    & $1\,224\,200$ & $1.91$ & & $44\,279$  \\
        $\Rdfo$   & $162$ & $160$ & & $853\,974$    & $1\,933\,404$ & $2.26$ & & $69\,392$  \\
        \bottomrule
    \end{tabular}
    \caption{The overhead panel, $196$ problems, both arms scored on
    $\norm{g_k} \leq \epsilon$, ordered by the product ratio. Totals are over every problem,
    solved or not, with no mean taken over survivors. The first-order arm evaluates no Hessian on
    any problem, so the last column has no first-arm counterpart: it is the whole price of the
    certificate.}
    \label{tab: so overhead}
\end{table}

\begin{figure}[htbp]
    \centering
    \includegraphics[width=\textwidth]{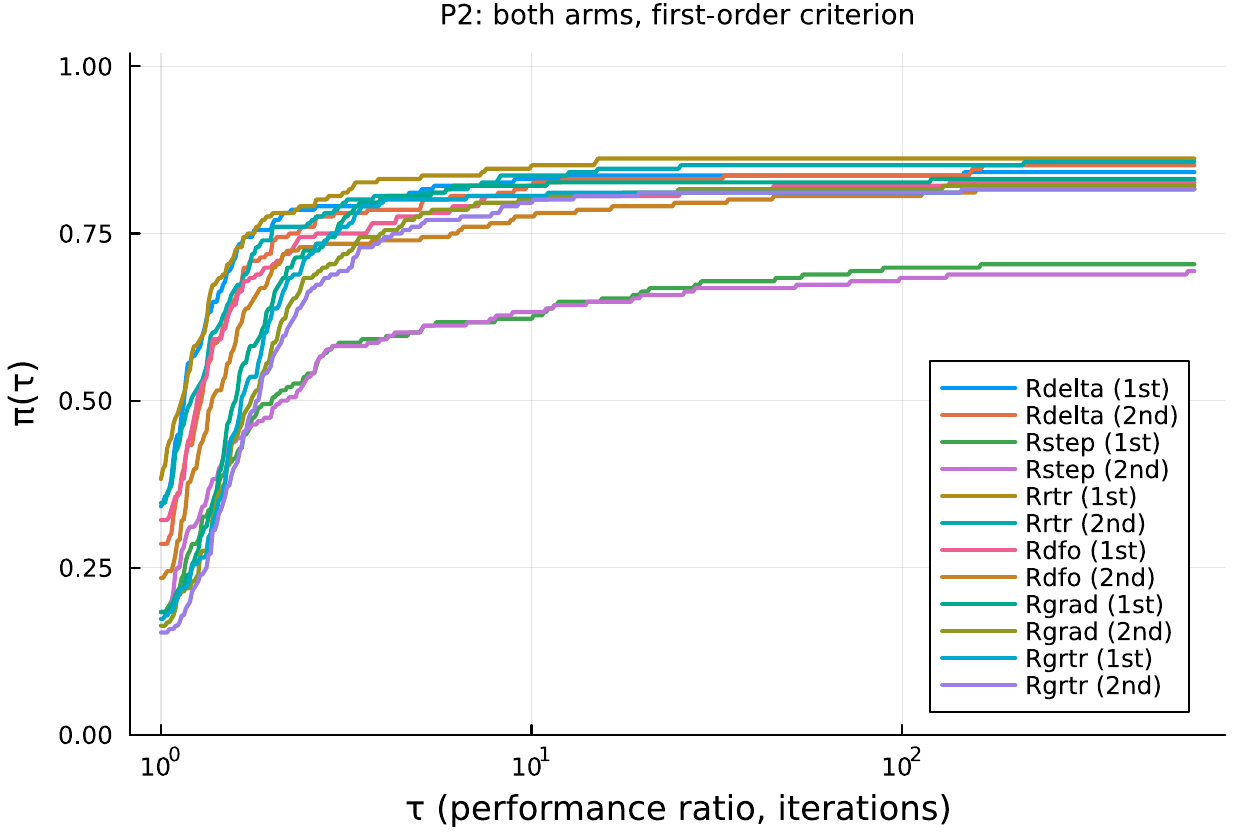}
    \caption{The overhead panel in iterations, both arms of all six mechanisms on the $196$
    problems, scored on the first-order criterion. The two curves of a mechanism run together
    across the whole range of $\tau$, so the reliability reading of Table~\ref{tab: so overhead}
    holds at every budget and not only at the asymptote.}
    \label{fig: so overhead iters}
\end{figure}

\begin{figure}[htbp]
    \centering
    \includegraphics[width=\textwidth]{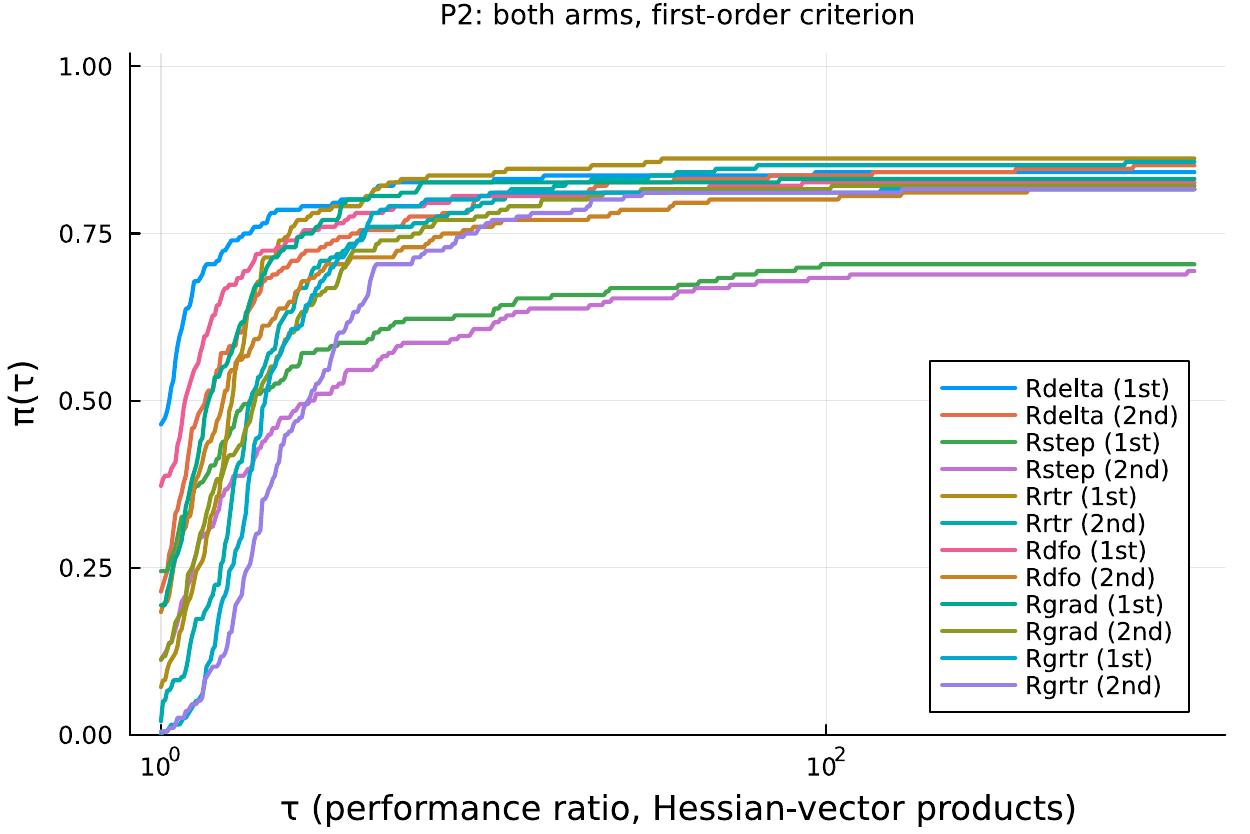}
    \caption{The same panel in Hessian-vector products, where the two arms of a mechanism
    separate by the ratio of Table~\ref{tab: so overhead}. The two gradient-scaled arms stay
    together and the other four do not, which is the anchor axis appearing as a cost.}
    \label{fig: so overhead prods}
\end{figure}

\subsection{What the certificate buys}
\label{subsec: so payoff}

The certificate panel asks the converse question on the $185$ problems with $n \leq 200$, where
$\lambda_k^-$ is formed and a certificate the test issues therefore holds. Each arm is scored on
what it can achieve: the first-order arm on reaching a critical point, the second-order arm on
returning a point certified to satisfy the second-order necessary conditions.

The two are within three problems of each other for every mechanism, and
Table~\ref{tab: so certificate} gives the exchange. $\Rdelta$ certifies $158$ second-order points
where its first-order arm reaches $155$ critical points of any kind, and is the only mechanism
ahead on the trade. $\Rgrtr$ draws at $150$. The remaining four give up two problems each.

That is the reading of this section. On this problem set the second-order arm returns a certified
point on very nearly the same problems on which the first-order arm returns an uncertified one,
and the exchange costs at most two problems out of $185$. The certificate is bought with Hessian
evaluations and not with reliability.

The ordering of the mechanisms is that of Table~\ref{tab: comparison} with one exchange at the
top. $\Rdelta$ and $\Rrtr$ lead, $\Rdfo$, $\Rgrad$ and $\Rgrtr$ sit six to eight problems behind,
and $\Rstep$ is last by twenty-four. The step anchor is as poor a carrier of a second-order
certificate as it is of a first-order one, and for the reason Section~\ref{sec: cost} gives: it
rebuilds the radius from a step the model has just chosen to keep short. Figure~\ref{fig: so arms}
compares the six second-order arms among themselves and reproduces that ordering.

\begin{table}[htbp]
    \centering
    \small
    \begin{tabular}{@{}lrrr@{}}
        \toprule
        Mechanism & Critical points, 1st arm & Certified, 2nd arm & Exchange \\
        \midrule
        $\Rdelta$ & $155$ & $158$ & $+3$ \\
        $\Rrtr$   & $159$ & $157$ & $-2$ \\
        $\Rdfo$   & $153$ & $151$ & $-2$ \\
        $\Rgrad$  & $153$ & $151$ & $-2$ \\
        $\Rgrtr$  & $150$ & $150$ & $0$  \\
        $\Rstep$  & $128$ & $126$ & $-2$ \\
        \bottomrule
    \end{tabular}
    \caption{The certificate panel, over the $185$ problems with $n \leq 200$. The first column
    counts the runs of the first-order arm that end in \texttt{first\_order}, which is what
    Section~\ref{subsec: stopping} calls solved for that arm. The second counts the runs of the
    second-order arm whose returned point meets the second-order criterion when it is applied
    afterwards. The two columns are not the same test, and that is the point: each arm is scored
    on the strongest statement it is equipped to make.}
    \label{tab: so certificate}
\end{table}

\begin{figure}[htbp]
    \centering
    \includegraphics[width=\textwidth]{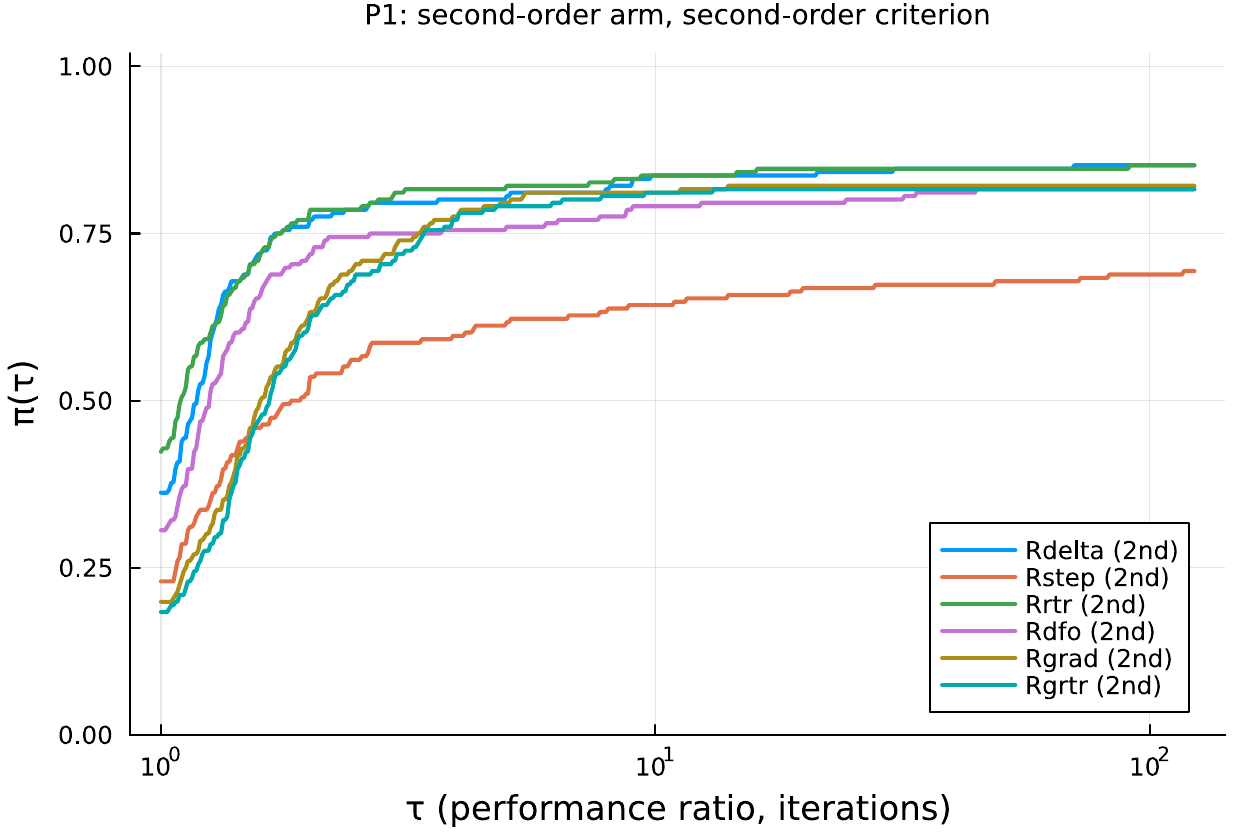}
    \caption{The six second-order arms compared among themselves in iterations, on the
    second-order criterion. $\Rstep$ is the low curve. This profile is drawn on all $196$
    problems, so it is read for the ordering of the arms and not for the counts, which
    Section~\ref{subsec: so limits} takes off the $185$ instead.}
    \label{fig: so arms}
\end{figure}

\subsection{The certificate panel in full}
\label{subsec: so p3}

Twelve configurations over the $185$ problems with $n \leq 200$, scored on the second-order
criterion. This panel forms the leftmost eigenvalue rather than estimating it, so a certificate
it issues holds. Appendix~\ref{app: so tables} carries the totals.

\paragraph{The ordering by anchor survives.}
Second-order over first-order Hessian-vector products, formed from
Table~\ref{tab: so cost p3}, run $0.90$ for $\Rgrad$, $0.93$ for $\Rgrtr$, $1.00$ for $\Rstep$,
$1.64$ for $\Rdelta$, $1.79$ for $\Rrtr$ and $2.29$ for $\Rdfo$. That reproduces the ordering of
Table~\ref{tab: so overhead} mechanism for mechanism, on a different problem set and against a
different criterion. Every ratio is lower here, so the eleven excluded problems carry the
overhead without deciding who pays it.

\paragraph{Two mechanisms pay in iterations rather than in products.}
$\Rgrad$ and $\Rgrtr$ spend fewer products with the curvature test than without it. $\Rgrad$
converges later, $76\,181$ objective evaluations against $46\,598$, while its products per
objective evaluation fall from $13.3$ to $7.3$. Anchoring to $\tau_k$ rather than to
$\norm{g_k}$ shrinks the radius wherever the curvature binds, and a smaller radius stops
truncated conjugate gradients sooner.

\paragraph{The Hessian column is the price.}
It has no first-arm counterpart, and below $n = 200$ it equals the gradient column exactly, so
the certificate costs one Hessian per gradient whatever the rule. A rule taking many short steps
therefore pays most. On this panel $\Rstep$ evaluates $397\,158$ Hessians against $44\,264$ for
$\Rrtr$, while certifying $126$ points against $157$.

\begin{figure}[H]
    \centering
    \begin{subfigure}[b]{0.49\textwidth}
        \includegraphics[width=\linewidth]{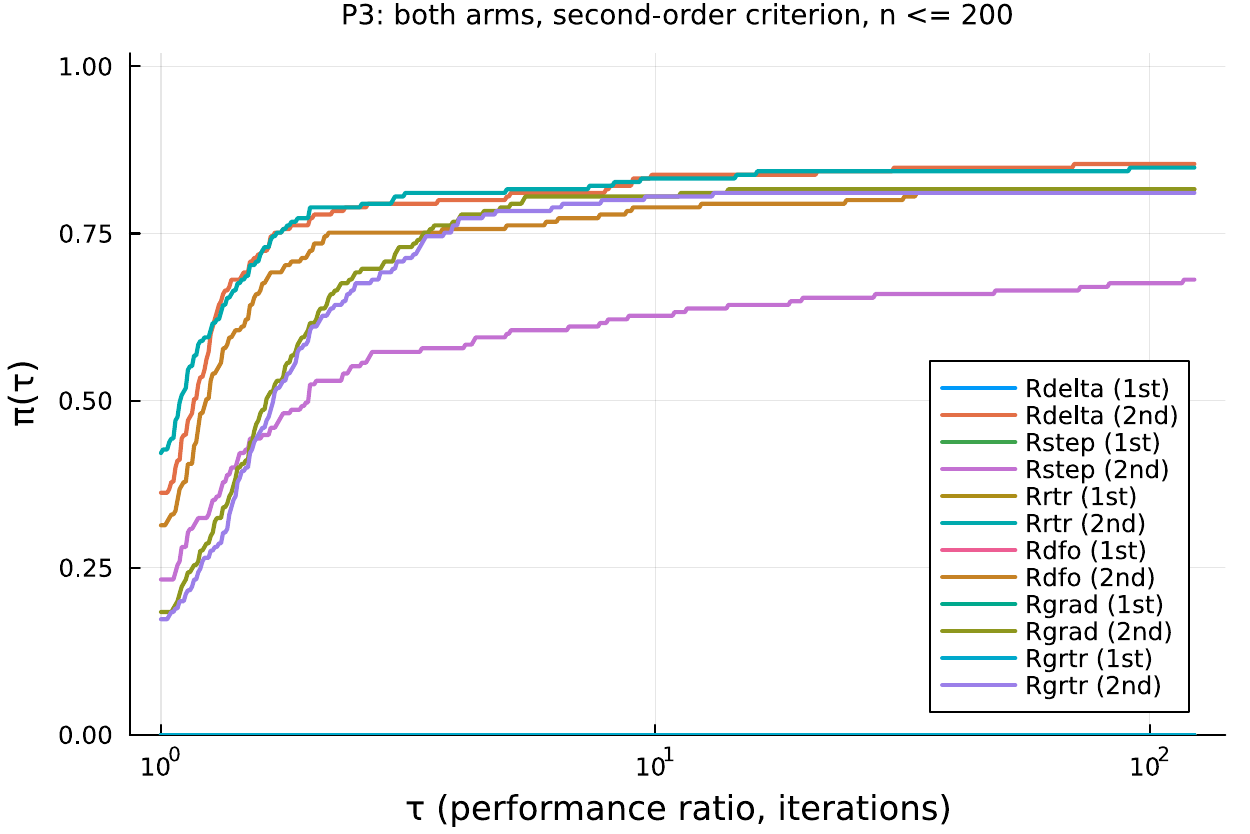}
        \caption{iterations}
    \end{subfigure}\hfill
    \begin{subfigure}[b]{0.49\textwidth}
        \includegraphics[width=\linewidth]{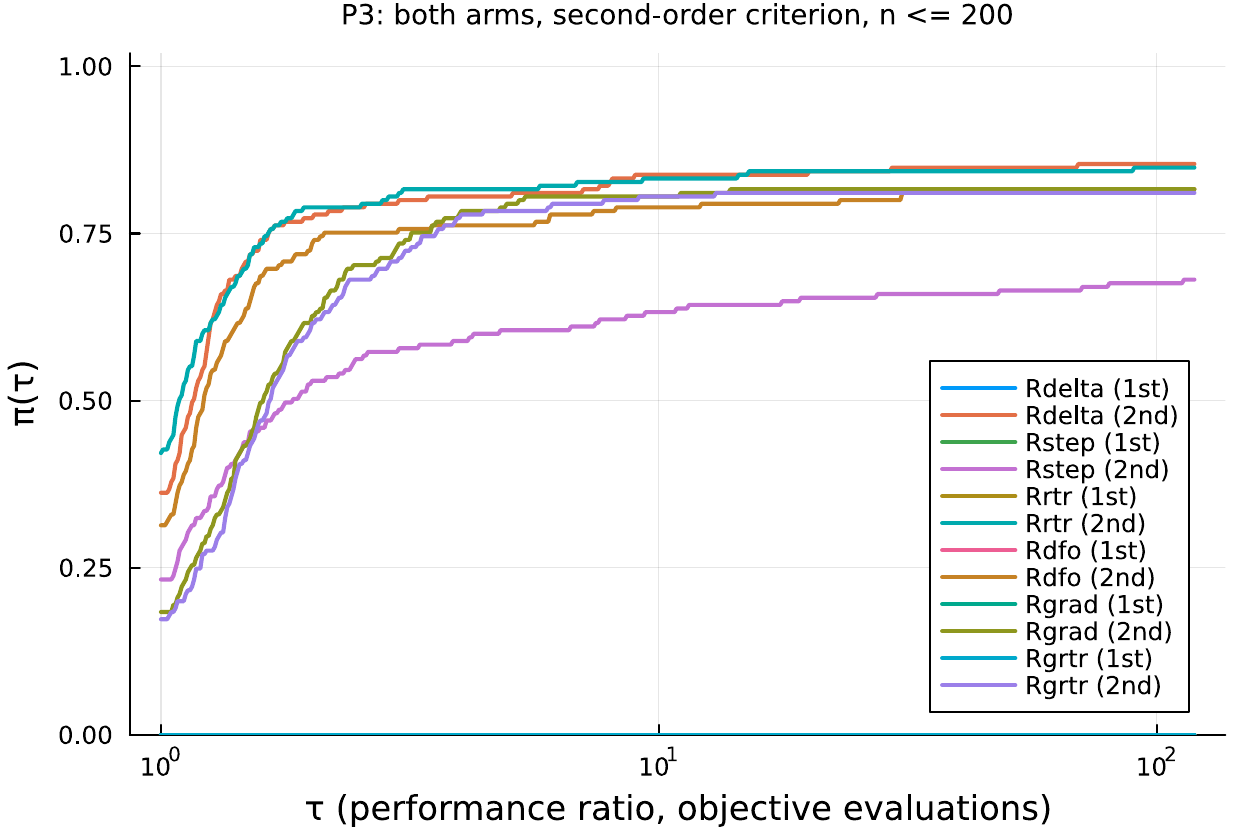}
        \caption{objective evaluations}
    \end{subfigure}

    \medskip
    \begin{subfigure}[b]{0.49\textwidth}
        \includegraphics[width=\linewidth]{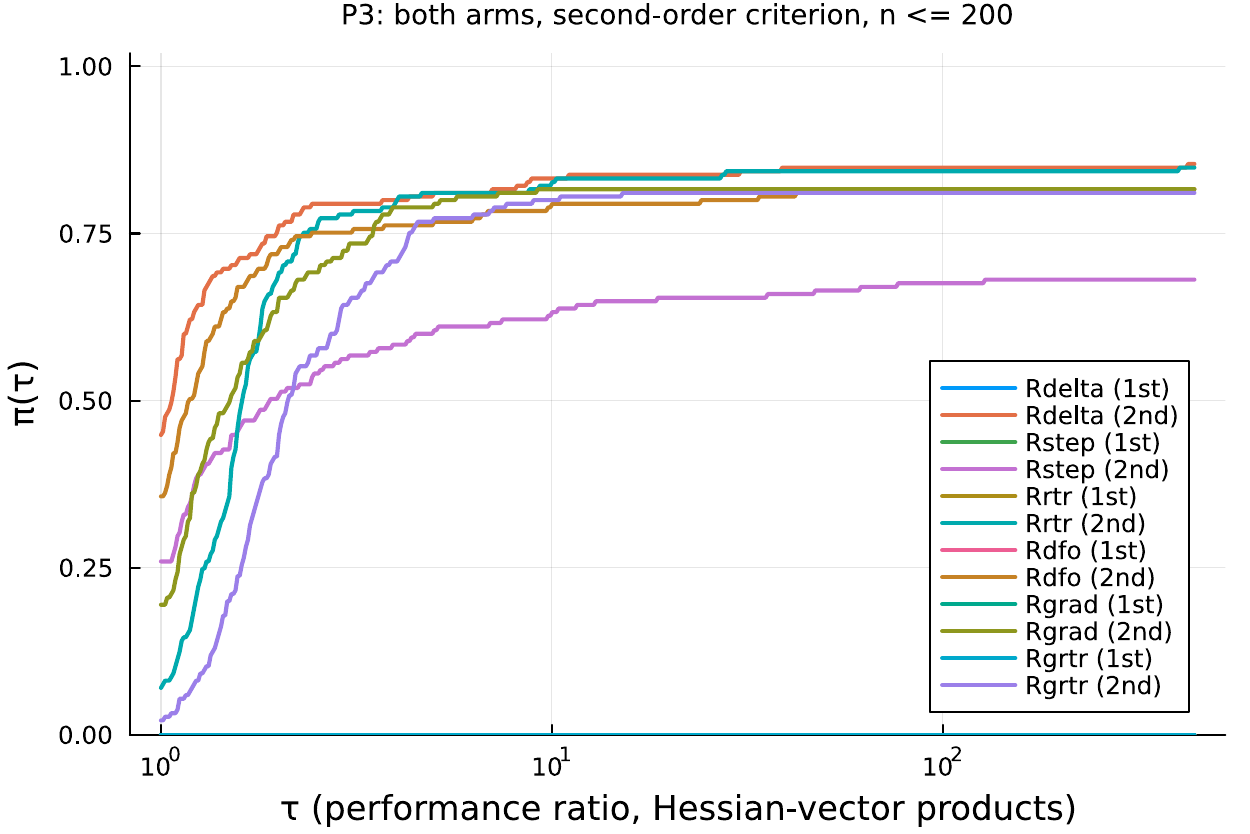}
        \caption{Hessian-vector products}
    \end{subfigure}\hfill
    \begin{subfigure}[b]{0.49\textwidth}
        \includegraphics[width=\linewidth]{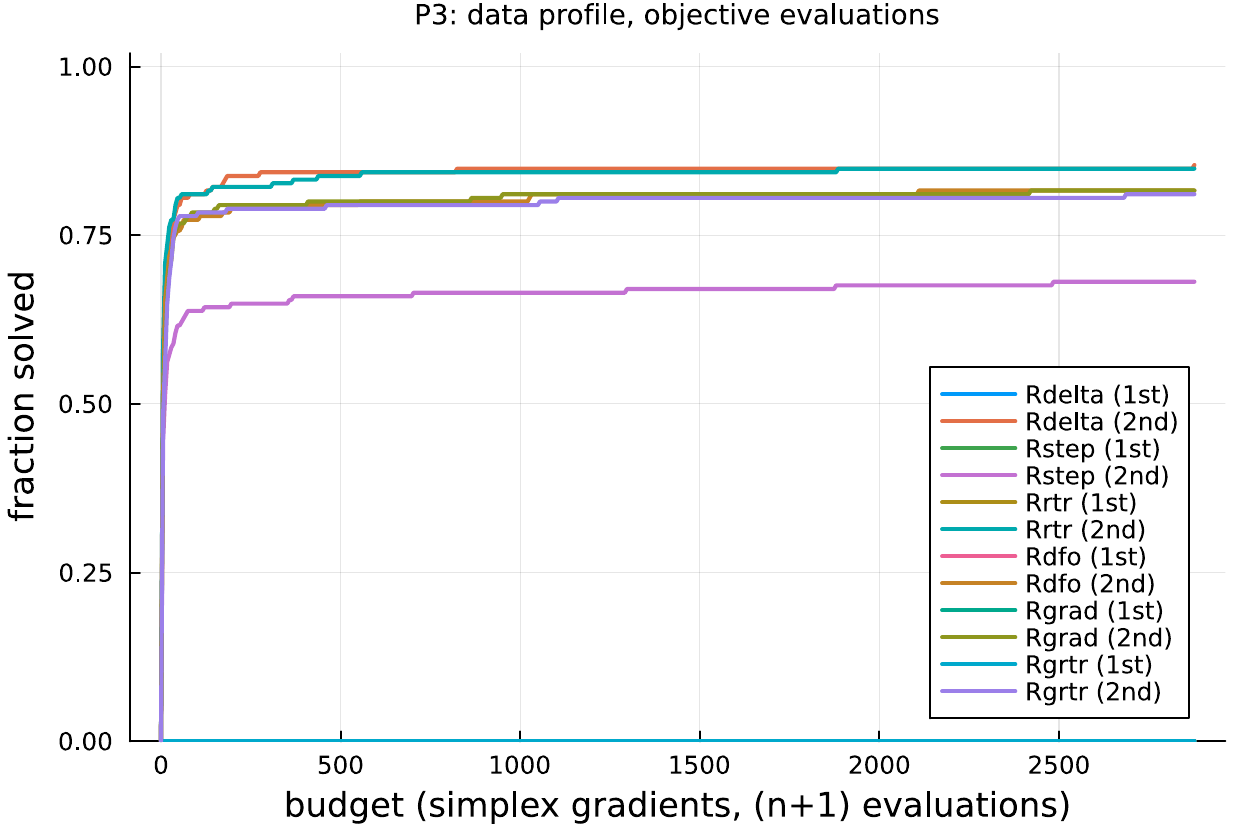}
        \caption{data profile}
    \end{subfigure}
    \caption{Panel P3. Both arms of all six mechanisms on the $185$ problems with $n \leq 200$,
    scored on the second-order criterion. The six first-order curves lie on zero at every budget.
    Section~\ref{subsec: so limits} says why that is not a measurement. Gradient and Hessian
    evaluations are omitted here and tabulated in Appendix~\ref{app: so tables}.}
    \label{fig: p3}
\end{figure}

\subsection{What these panels do not show}
\label{subsec: so limits}

\paragraph{The first-order arms are not scored on the second-order criterion.}
Every one of those six rows reads $0/185$, in the listing of
Section~\ref{subsec: so p3} and as the flat curves of Figure~\ref{fig: p3}. That number
records the instrument and not the mechanism, and no count in
Table~\ref{tab: so certificate} is taken from it. The first-order arm
performs zero Hessian evaluations on all $185$ problems, as the Hessian column of
Table~\ref{tab: so overhead} states, and at $n \leq 200$ the leftmost eigenvalue is obtained from
that evaluation. The criterion was therefore never evaluated for those runs. The zero says the
certificate was not computed, not that the returned points fail it, and the fraction of
first-order points that already satisfy the second-order conditions is measured nowhere in this
\thisdoc{}. One Hessian evaluation per run would measure it.

Section~\ref{subsec: stopping} states the convention that forbids the same reading with the arms
exchanged: scoring on \texttt{first\_order} alone would mark every successful $\tau$-anchored run
as unsolved and give the configurations that do best a reliability of zero. The objection is the
same one and it is why those rows are omitted.

\paragraph{Eleven problems are excluded from the certificate panel.}
Eleven of the $196$ have $n > 200$, where the eigenpoint wrapper estimates the leftmost
eigenvalue rather than forming it. The estimate is one-sided by
Section~\ref{app: measure}, so it understates the negative curvature available and the
second-order test is optimistic there. Scored over all $196$ the second-order arms report $167$,
$167$, $160$, $161$, $160$ and $136$ certified points for $\Rdelta$, $\Rrtr$, $\Rdfo$, $\Rgrad$,
$\Rgrtr$ and $\Rstep$, between nine and ten more than over the $185$, so nine or ten of those
eleven are certified on an estimate. Table~\ref{tab: so certificate} is read off the $185$ for
that reason, and Figure~\ref{fig: so arms}, which is drawn on the $196$, is read for the ordering
of the arms and not for the counts.

The archive exports eighteen profiles, three panels in six cost measures. Nine are printed here:
three selected from P1 and P2, and the whole of P3 in Figure~\ref{fig: p3}. Within a panel the
objective- and gradient-evaluation profiles have the same intercept and the same asymptote as the
iteration profile, a trust-region iteration evaluating the objective at exactly one trial point,
and the data profiles reorder nothing, so P3 is printed entire for the certificate it carries and
not because its six measures separate. Its six flat first-order curves are the artefact of the
first paragraph above, drawn.

\paragraph{Six runs raised.}
Six runs of the $4\,704$ ended in \texttt{exception}, all of them in a second-order arm, and the
archive does not record which problems or which exception. They count as failures throughout, so
no rate above is inflated by them, and six runs cannot move a solve rate by more than one
problem. Two further hazards were declared with the campaign and both are nil: no run began at a
point already satisfying the gradient test, and no run's own status disagreed with the post hoc
criterion.

\paragraph{What is measured is average cost on one problem set.}
These are aggregate results on $196$ CUTEst problems at one pair of tolerances. Nothing here
bears on the sharpness of a worst-case bound, and nothing here settles the claim as
Table~\ref{tab: claims} states it, which asserts a repair \emph{on the same problems and from the
same starting points} as the failures of Claims~\ref{claim: cauchy} and~\ref{claim: bad limits}.
Those two configurations are still not rerun under $\omega_k = \tau_k$. What this section adds to
Claim~\ref{claim: order} is the aggregate half: the repair, where a mechanism admits it at all,
is bought with Hessian evaluations and not with reliability.

\section{\texorpdfstring{What is settled, and what this design cannot show}{What is settled, and what this design cannot show}}

\subsection{\texorpdfstring{The verdicts, collected}{The verdicts, collected}}
\label{sec: verdict}

Each claim is answered where the evidence for it is reported, so the reader meets a
verdict at the point it is earned rather than at the end. Table~\ref{tab: verdicts} is the
index of those answers and adds nothing to them. Six of the nine are settled outright, one on both sides but in
two different currencies, one in half, and one is bounded by a case the second-order anchor does
not repair.

\begin{table}[htbp]
    \centering
    \small
    \begin{tabular}{@{}r p{0.34\textwidth} p{0.32\textwidth} l@{}}
        \toprule
        & Claim & Verdict & Stated in \\
        \midrule
        \ref{claim: asymptotics} & the asymptotics of the radius & Settled & \S\ref{subsec: P1} \\
        \ref{claim: rate of decay} & harmonic decay of the anchored radius & Settled & \S\ref{subsec: P1} \\
        \ref{claim: zeta} & the activity threshold & Settled, in both regimes & \S\ref{sec: threshold} \\
        \ref{claim: rate} & the local rate after inactivity & First half stands, second weakly & \S\ref{sec: inactivity} \\
        \ref{claim: cauchy} & the Cauchy point & Settled on the designed problems & \S\ref{sec: model vs radius rule} \\
        \ref{claim: bad limits} & a limit that is not a minimiser & Settled & \S\ref{sec: model vs radius rule} \\
        \ref{claim: escape} & escape from a flat region & Settled, by measurement and by proof & \S\ref{subsec: flat} \\
        \ref{claim: order} & the second-order anchor & Partially settled, and bounded & \S\ref{subsec: flat}, \S\ref{sec: second order cutest} \\
        \ref{claim: no uniform cap} & no cap is large enough & Settled & \S\ref{sec: threshold} \\
        \bottomrule
    \end{tabular}
    \caption{The nine claims of Table~\ref{tab: claims}, the verdict each receives, and the
section that states it.}
    \label{tab: verdicts}
\end{table}

\subsection{\texorpdfstring{What this design cannot show}{What this design cannot show}}
\label{sec: threats}

A caveat that attaches to one measurement is made where that measurement is, and is only
named here. What follows in full is what belongs to the design as a whole and has no other
home.

\paragraph{One failure mode is machine-dependent.}
A run stopped by the wall-clock budget is counted as unsolved on the same footing as one
stopped by the iteration budget, and Section~\ref{subsec: settings} names the machine that
decides the first. The two are separate outcomes in the run record, \texttt{max\_time} and
\texttt{max\_iter}, but no count below separates them, so a reader cannot see how much of any
solve count the time cap decided.

\paragraph{The collection is not a sample.}
The selection is every unconstrained CUTEst problem with $2 \leq n \leq 1000$, which is a
census of a collection and not a draw from a population, and the grid of
Section~\ref{subsec: interaction} has no replication, so none of the percentages carries a
sampling error and none supports an interval. The correlation of $-0.84$ between tail activity
and efficiency in Section~\ref{subsec: like for like} is a description of eight points, not an
estimate. That is the weaker half of the point. CUTEst is not itself a draw from any population
of problems anyone solves; it is a set accumulated over decades for reasons of its own. So these
numbers describe these $196$ problems exactly and nothing carries them to a problem outside the
set, the recommendation of Section~\ref{sec: conclusion} included, which is made on this
evidence and no more. The upper bound on $n$ is a budget decision, and a rule that separates only
on large problems would be invisible here. What differs between two runs of one configuration is
not the arithmetic, which is deterministic, but whether the $120$ seconds of
Section~\ref{subsec: settings} end a run, which the machine decides.

\paragraph{One split is by outcome and not by design.}
The rows of Table~\ref{tab: mu theta} are separated by what the runs did rather than by
what was set, since a run whose cap binds is a run that wanted a larger radius, which is a
property of the problem as much as of the cap. Nothing in this \thisdoc{} conditions on that split
without saying so.

\paragraph{Five caveats are made where their measurements are.}
They are collected here by pointer and not restated. The Cauchy diagnostic separates
nothing at any cap, in Section~\ref{subsec: mu sweep}. The order estimator admits fits that no
rate can produce, thirty-nine below $1$ and twenty-two above $2$ of $431$, in
Section~\ref{subsec: order rules}; neither count changes the reading of Claim~\ref{claim: rate},
whose evidence is the overlap of the eight distributions rather than the value of any median. The
factor axis is not isolated, the two adaptive rules running at constants the other six do not
share, in Section~\ref{subsec: like for like}. The initial multiplier moves with the cap at the
two smallest caps, which Update~\ref{update: rgrad generic} forces through
$\mu_0 \leq \overline{\mu}$, in Section~\ref{subsec: mu sweep}. And the reading of
$\lambda^*_{\min} < 10^{-2}$ off eleven problems runs backwards through the theory on two
unchecked provisos, in the same section. Section~\ref{subsec: so limits} does the same
for the second-order campaign.

\paragraph{Where Parts~I and~II might have stated the same rule differently.}
The introduction promises the places where the two theoretical parts state a mechanism
differently in a way an experiment can see. On the criticality measure there is no such place,
and it is worth recording once that there is not. \PartII{} defines
$\tau_k = \max\{\norm{g_k},\, -\lambda_k^-\}$, \PartI{} does not use the symbol for a
criticality measure at all, and Appendix~\ref{app: measure} states the reading this study uses.
Every result of Section~\ref{subsec: flat} that turns on the second-order anchor is in that
reading, and so is the software.

\section{Conclusion}
\label{sec: conclusion}

The radius update rule is the least standardised component of a trust-region method, and this
study does not standardise it. What it supplies is the comparison a reader looking for a reason
to prefer one rule could not find, on one iteration in which the rule is the only thing that
changes.

The rule is worth less than the model and more than nothing. Crossing the mechanisms with four
model Hessians puts $96.5$ per cent of the variation on the model and $1.6$ on the rule. That
margin is small and the aggregate still resolves it, the eight configurations at matched
constants spanning $138$ to $169$ problems of $196$. Where the rule matters is not where
intuition puts it. The four rules separate by $27$ problems under the exact Hessian, by $11$
under L-BFGS, by $7$ under SR1 and by $6$ under the identity. The better the model, the more
the radius decides, because no radius repairs a wrong direction.

The rule governs the length of the step, so it is visible exactly where the radius binds. That
is why the activity of the constraint is the instrument this study runs on, and why mechanisms
whose first-order behaviour is indistinguishable separate under it. Conditional on the
constraint releasing, the observed order belongs to the model and the subproblem solver, and the
rule does not enter it.

The side conditions of \PartII{} are not technical convenience to be tuned away. Two results of
one shape say so. For every cap, however large, there is a minimiser of the cardinal sine in
whose basin the constraint stays active at every large iteration. For every anchoring constant,
however large, there is a flatness on which escape takes arbitrarily long. Neither is escaped by
choosing a larger constant, and both hold with the exact Hessian.

For a reader who has to choose within the criticality-anchored family, uncapped
$\Rgrad$ is the robust default on this collection, and
Section~\ref{sec: threats} says why that is not a statement about problems outside it.
Raising the cap through
$0.1$, $1$, $10$ and $100$ and then removing it takes reliability from $102$ to $161$ problems
of $196$, monotonically and with no two profiles crossing. Across the anchors the
recommendation is narrower than that. On the eight configurations of
Table~\ref{tab: comparison} the retrospective rule is ahead of uncapped $\Rgrad$ on every
column, solving $169$ problems against $163$ at an efficiency of $0.449$ against $0.184$, and
$\Rdelta$ is ahead of it on both. What the cap sweep establishes is that no cap improves on
removing the cap, not that the gradient anchor is the best anchor. The choice carries a tension. The cap
is what supplies the bound the asymptotic theory assumes, and removing it is what the aggregate
rewards, so the configuration the collection prefers is the one the theory covers least. That
tension is the subject of this study and not a defect of it.

A reader choosing a rule is choosing how much the radius costs where it binds. A reader choosing
a model is choosing whether it binds at all.

\section*{Acknowledgements}
This research was funded by a Discovery Grant of the Natural Sciences and Engineering
Research Council of Canada (NSERC), held by the second author.

\bibliography{References}

\appendix

\section{The algorithmic frame}
\label{app: frame}

\subsection{The basic trust-region algorithm}
\label{app: btr}

\begin{algorithm}[!tbp]
    \small
    \SetAlgoLined
    \DontPrintSemicolon

    \textbf{Step 0: Initialisation.} Choose $\eta \geq 0$. Set $x_0 \in \R^p$, $\Delta_0 \in (0, \infty)$. Compute $f(x_0)$ and set $k = 0$. \; ~\\

    \textbf{Step 1: Model definition and step computation.} Define a local model $m_k$ of $f$ in the neighbourhood $\calB_k$. \; ~\\

    \textbf{Step 2: Model minimisation.} Compute a step $s_k$ that sufficiently reduces $m_k$ with $x_k + s_k \in \calB_k$, and define $x_k^{\mathrm{cand}} = x_k + s_k$. \; ~\\

    \textbf{Step 3: Acceptance of the trial point.} Compute $f(x_k^{\mathrm{cand}})$ and define
    \begin{equation}
        \rho_k = \frac{f(x_k) - f(x_k^{\mathrm{cand}})}{m_k(0) - m_k(s_k)}.
        \label{eqn:definition rho}
    \end{equation}
    If $\rho_k \geq \eta$, set $x_{k+1} = x_k + s_k$; otherwise $x_{k+1} = x_k$. \; ~\\

    \textbf{Step 4: Trust-region radius update.} Compute $\Delta_{k+1}$ according to some update rule. \; ~\\

    \textbf{Step 5: Next iteration.} Increment $k$ by 1 and repeat \textbf{Steps 1 to 5} while some stopping criterion is not met. \;

    \caption{Basic trust-region algorithm (BTR).}   
    \label{alg:generic BTR}
\end{algorithm}

The neighbourhood of Step~1 is the trust region
\begin{equation}
    \calB_k = \{ x \in \Rp : \norm{x - x_k} \leq \Delta_k \},
    \label{eqn: trust region}
\end{equation}
$\norm{\cdot}$ denoting the Euclidean norm throughout. Every update of
Appendix~\ref{app: update mechanisms} supplies Step~4 of Algorithm~\ref{alg:generic BTR} and
changes nothing else.

\subsection{Truncated conjugate gradients}
\label{app: steihaug}

\begin{algorithm}[!tbp]
\caption{Truncated conjugate gradients, \texttt{SteihaugCG}}
\label{alg: SteihaugCG}
\DontPrintSemicolon
\KwIn{$g$; operator $B$, through products $Bv$ only; $\Delta$; forcing constants
      $\chi$, $\theta$; cap $j_{\max}$}
\KwOut{$s$ and the flag \texttt{active}}
\BlankLine
\lIf{$\norm{g} = 0$}{\KwRet $(0, \textnormal{false})$}
$s \leftarrow 0$; $r \leftarrow -g$; $d \leftarrow r$;
$\text{tol} \leftarrow \min\{\chi, \norm{g}^{\theta}\}\,\norm{g}$
  \tcp*{forcing condition}
\For{$j = 1, \dots, j_{\max}$}{
    $\kappa \leftarrow d^\top B d$\;
    \If(\tcp*[f]{negative curvature}){$\kappa \leq 0$}{
        $\tau \leftarrow$ positive root of $\norm{s + \tau d} = \Delta$;
        \KwRet $(s + \tau d,\ \textnormal{true})$\;
    }
    $\alpha \leftarrow r^\top r / \kappa$\;
    \If(\tcp*[f]{the iterate leaves the region}){$\norm{s + \alpha d} \geq \Delta$}{
        $\tau \leftarrow$ positive root of $\norm{s + \tau d} = \Delta$;
        \KwRet $(s + \tau d,\ \textnormal{true})$\;
    }
    $s \leftarrow s + \alpha d$; $r_{+} \leftarrow r - \alpha B d$\;
    \lIf{$\norm{r_{+}} < \text{tol}$}{\KwRet $(s,\ \textnormal{false})$}
    $\beta \leftarrow r_{+}^\top r_{+} / r^\top r$; $d \leftarrow r_{+} + \beta d$; $r \leftarrow r_{+}$\;
}
\KwRet $(s, \textnormal{false})$\;
\end{algorithm}

\section{The quadratic decrease condition under inexact subproblem solution}
\label{app: mqd}

Condition~\eqref{eqn: mqd} is stated for an exact minimiser of the model. It survives a truncated
conjugate-gradient solver, and with room to spare. Throughout, $H_k$ is the model Hessian that
Algorithm~\ref{alg: SteihaugCG} sees as the operator $B$, and $p_j$ is the direction that algorithm
calls $d$ at its $j$-th inner iteration.

The argument rests on one property of a subspace minimiser. If $s_k$ minimises $m_k$ over a
subspace $\calK \subseteq \Rp$, then $t \mapsto m_k(s_k + tv)$ is minimised at $t = 0$ for every
$v \in \calK$, so
\begin{equation}
    \langle H_k s_k + g_k,\, v\rangle = \langle \nabla m_k(s_k),\, v\rangle = 0
    \qquad \text{for all } v \in \calK ,
    \label{eqn: galerkin subspace}
\end{equation}
the residual being orthogonal to the search space. Taking $v = s_k \in \calK$ gives
\begin{equation}
    \langle H_k s_k + g_k,\, s_k\rangle = 0 ,
    \label{eqn: galerkin}
\end{equation}
which is subspace optimality contracted with the step itself.

\subsection{Conjugate gradients on a positive definite Hessian}
\label{app: mqd cg}

Write $r_j \defeq -\nabla m_k(s_j) = -(g_k + H_k s_j)$ for the residual at the $j$-th inner
iterate and $p_j$ for the $j$-th direction, with $s_0 = 0$ and $p_0 = r_0 = -g_k$. The
method of \citet{HestenesStiefel1952} sets
\begin{equation}
    \alpha_j = \frac{\norm{r_j}^2}{p_j^\top H_k p_j} ,
    \quad
    s_{j+1} = s_j + \alpha_j p_j ,
    \quad
    r_{j+1} = r_j - \alpha_j H_k p_j ,
    \quad
    p_{j+1} = r_{j+1} + \frac{\norm{r_{j+1}}^2}{\norm{r_j}^2}\, p_j ,
    \label{eqn: cg recurrences}
\end{equation}
and produces $H_k$-conjugate directions spanning the Krylov space
\begin{equation}
    \calK_j \defeq \operatorname{span}\{g_k,\, H_kg_k,\, \dots,\, H_k^{\,j-1}g_k\}
    = \operatorname{span}\{r_0,\dots,r_{j-1}\}
    = \operatorname{span}\{p_0,\dots,p_{j-1}\} .
    \label{eqn: krylov}
\end{equation}

\begin{lemma}[Conjugate gradients return the subspace minimiser]
\label{lem: cg minimiser}
    If $H_k$ is positive definite on $\calK_j$, then
    $s_j = \arg\min\{ m_k(s) : s \in \calK_j \}$, so $r_j \perp \calK_j$
    and~\eqref{eqn: galerkin} holds at $s_j$.
\end{lemma}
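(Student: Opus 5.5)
The plan is to prove Lemma~\ref{lem: cg minimiser} by induction on $j$, carrying three invariants together: (i) $\operatorname{span}\{r_0,\dots,r_{j-1}\} = \operatorname{span}\{p_0,\dots,p_{j-1}\} = \calK_j$, (ii) the directions are mutually $H_k$-conjugate, $p_i^\top H_k p_l = 0$ for $i \neq l < j$, and (iii) the residuals are mutually orthogonal, $r_i^\top r_l = 0$ for $i \neq l \leq j$. The base case is $p_0 = r_0 = -g_k$ and $\calK_1 = \operatorname{span}\{g_k\}$. Positive definiteness of $H_k$ on $\calK_j$ is used only to guarantee $p_i^\top H_k p_i > 0$ for every $p_i \in \calK_j$. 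Hence every $\alpha_i$ in~\eqref{eqn: cg recurrences} is well defined and positive, and the recurrences run without the negative-curvature exit of Algorithm~\ref{alg: SteihaugCG}.

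The inductive step is the standard Hestenes--Stiefel calculation. First, $r_{j} = r_{j-1} - \alpha_{j-1}H_k p_{j-1}$ with $p_{j-1}\in\calK_j$ gives $r_j \in \calK_{j+1}$. Second, $p_j = r_j + \beta p_{j-1}$ gives the span identity (i) at $j+1$. Orthogonality of $r_j$ to $r_{l}$ for $l<j$ follows from the recurrence. For $l = j-1$, use the definition of $\alpha_{j-1}$ together with $r_{j-1}^\top H_k p_{j-1} = p_{j-1}^\top H_k p_{j-1}$, which holds because $p_{j-1} - r_{j-1}$ is a multiple of $p_{j-2}$ and is conjugate to $p_{j-1}$. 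For $l < j-1$, use the induction hypothesis and the fact that $H_k r_l \in \calK_{l+2}\subseteq \calK_{j-1}$ is spanned by $p_0,\dots,p_{j-2}$, all conjugate to $p_{j-1}$. Conjugacy of $p_j$ to the earlier directions then follows the same way, and the choice of $\beta$ handles $p_{j-1}$. I expect this bookkeeping, keeping the order of the two orthogonality facts straight so that the argument is not circular, to be the main obstacle. The remaining steps are short.

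With the invariants in hand, note that $s_j = \sum_{i<j}\alpha_i p_i \in \calK_j$, and that $r_j = -(g_k + H_k s_j)$ is orthogonal to $r_0,\dots,r_{j-1}$, hence to $\calK_j$ by (i). This is exactly~\eqref{eqn: galerkin subspace} at $s_j$. Since $H_k$ is positive definite on $\calK_j$, the restriction of $m_k$ to $\calK_j$ is strictly convex. A stationary point of that restriction is therefore its unique minimiser, so $s_j = \arg\min\{m_k(s): s\in\calK_j\}$. Finally, taking $v = s_j \in \calK_j$ gives~\eqref{eqn: galerkin}. If some $r_i$ vanishes before step $j$, the recurrence has already stopped at the exact minimiser over $\calK_i = \calK_j$ and the conclusion holds trivially.
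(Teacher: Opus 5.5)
Your overall route is sound, but the step you flagged as the main obstacle is justified incorrectly. For $l<j-1$ the recurrence gives $r_l^\top r_j = r_l^\top r_{j-1} - \alpha_{j-1}\, r_l^\top H_k p_{j-1}$. You dispose of the second term by saying that $H_k r_l \in \calK_{l+2}\subseteq\calK_{j-1}$ is spanned by $p_0,\dots,p_{j-2}$, all conjugate to $p_{j-1}$. That does not work, for two reasons.

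\begin{itemize}
\item The inclusion $\calK_{l+2}\subseteq\calK_{j-1}$ is false at $l=j-2$, where $\calK_{l+2}=\calK_j$ already contains $p_{j-1}$.
\item Even where the inclusion holds, expanding $H_k r_l=\sum_i c_i p_i$ turns the term into $\sum_i c_i\,p_i^\top p_{j-1}$. That is a Euclidean inner product, and conjugacy says nothing about it. In fact $r_0^\top p_{j-1}=\norm{r_{j-1}}^2\neq 0$, so $p_{j-1}$ is not orthogonal to $\calK_{j-1}$.
\end{itemize}

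The repair is to keep $H_k$ on $p_{j-1}$. Then $r_l\in\calK_{l+1}\subseteq\calK_{j-1}=\operatorname{span}\{p_0,\dots,p_{j-2}\}$, and each $p_i^\top H_k p_{j-1}$ with $i\leq j-2$ vanishes by (ii).

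The membership $H_k(\cdot)\in\calK_{l+2}$ belongs to the conjugacy step instead, where it is paired with Euclidean orthogonality. For $l<j-1$ you have $p_j^\top H_k p_l = r_j^\top H_k p_l$. Moreover $H_k p_l\in\calK_{l+2}\subseteq\calK_j=\operatorname{span}\{r_0,\dots,r_{j-1}\}$, which is orthogonal to $r_j$ by (iii). So the two steps do not go \emph{the same way}. Each uses the invariant the other produces, and that crosswise use is what keeps the induction from being circular.

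With that repair your argument is a complete, self-contained proof, and it runs in the opposite direction from the paper's.
\begin{itemize}
\item The paper takes conjugacy and the span identity \eqref{eqn: krylov} as known and writes a general $s\in\calK_j$ as $\sum_{i<j}\lambda_i p_i$. Conjugacy then splits $m_k$ into $j$ independent strictly convex scalar quadratics, each minimised at $\lambda_i=\alpha_i$ because $-g_k^\top p_i=r_i^\top p_i=\norm{r_i}^2$. Minimality comes first, and $r_j\perp\calK_j$ is read off \eqref{eqn: galerkin subspace}.
\item You obtain $r_j\perp\calK_j$ first, from residual orthogonality and (i), and deduce minimality from strict convexity. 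This is the alternative the paper only mentions in Remark~\ref{rem: conjugacy}.
\end{itemize}

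The paper's version is shorter and shows why conjugacy lets $j$ line searches solve a $j$-dimensional problem. However, it imports conjugacy and uses $r_i\perp p_{i-1}$, which is the lemma itself at index $i<j$. It is therefore tacitly an induction leaning on standard conjugate-gradient theory. Yours establishes conjugacy and orthogonality together and imports nothing. It also handles the breakdown case $r_i=0$, which the paper's proof passes over: the strict convexity of each scalar quadratic there needs $p_i\neq 0$.
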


\begin{proof}
    Write $s = \sum_{i<j}\lambda_i p_i$ for a general element of $\calK_j$. Conjugacy removes the
    cross terms from the quadratic part, so
    \begin{equation}
        m_k(s) - f(x_k)
          = \sum_{i<j}\Bigl[ \lambda_i\, g_k^\top p_i
                             + \tfrac12 \lambda_i^2\, p_i^\top H_k p_i \Bigr] ,
        \label{eqn: separates}
    \end{equation}
    a sum of independent scalar quadratics, each strictly convex since
    $p_i^\top H_k p_i > 0$. The $i$-th is minimised at
    $\lambda_i^* = -g_k^\top p_i / (p_i^\top H_k p_i)$, and $\lambda_i^* = \alpha_i$ because
    $-g_k^\top p_i = r_i^\top p_i = \norm{r_i}^2$. The first equality here uses
    $r_i = -g_k - H_k s_i$ together with $p_i^\top H_k s_i = 0$, which is conjugacy, and the
    second uses $p_i = r_i + \beta_i p_{i-1}$ together with $r_i \perp p_{i-1} \in \calK_i$. The
    inner iterate $s_j = \sum_{i<j}\alpha_i p_i$ therefore minimises~\eqref{eqn: separates} over
    $\calK_j$, and \eqref{eqn: galerkin subspace} gives $r_j \perp \calK_j$.
\end{proof}

\begin{remark}[Conjugacy is what does the work]
\label{rem: conjugacy}
    Conjugacy is what makes the line searches independent, so that a sequence of one-dimensional
    minimisations solves the $j$-dimensional problem. Without it, minimising along $p_j$ would
    spoil the optimality already reached along $p_0,\dots,p_{j-1}$. An equivalent route to the
    same conclusion runs through the mutual orthogonality of the residuals, $r_j^\top r_i = 0$ for
    $i < j$, which together with~\eqref{eqn: krylov} puts $r_j \perp \calK_j$ by construction.
\end{remark}

Lemma~\ref{lem: cg minimiser} applies whenever the inner iteration stops of its own accord, and in
particular on the residual test $\norm{r_j} \leq \min\{\chi, \norm{g_k}^{\theta}\}\norm{g_k}$ of
Algorithm~\ref{alg: SteihaugCG}. The iterate $s_j$ is then the exact subspace minimiser whatever $j$
one stops at.

\subsection{The boundary case, where the identity becomes an inequality}
\label{app: mqd boundary}

Truncation breaks the hypothesis of Lemma~\ref{lem: cg minimiser}, the returned step being a point
on the trust-region boundary along the current direction rather than an inner iterate. The
equality is lost. What survives is an inequality of the right sign.

\begin{lemma}[The truncated step]
\label{lem: truncated step}
    Let $s_j$ be the last inner iterate inside the trust region, $p_j$ the current direction, and
    let the returned step be $s_k = s_j + \tau p_j$ with $\tau > 0$ chosen so that
    $\norm{s_k} = \Delta_k$. Assume either
    \begin{enumerate}[label=(\alph*),leftmargin=2.2em]
        \item $p_j^\top H_k p_j > 0$ and $\tau \leq \alpha_j$, the truncation being triggered
              because the full step leaves the region, or
        \item $p_j^\top H_k p_j \leq 0$, the truncation being triggered by negative curvature.
    \end{enumerate}
    Then
    \begin{equation}
        \langle H_k s_k + g_k,\, s_k \rangle \;\leq\; 0 .
        \label{eqn: galerkin ineq}
    \end{equation}
\end{lemma}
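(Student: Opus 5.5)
The plan is to expand $\langle H_k s_k + g_k, s_k\rangle$ along the decomposition $s_k = s_j + \tau p_j$ and use Lemma~\ref{lem: cg minimiser} at the last interior iterate $s_j$. Since $s_j$ was produced by conjugate gradients with positive curvature on every earlier direction, $H_k$ is positive definite on $\calK_j$, so $r_j \perp \calK_j$, that is $\langle H_k s_j + g_k, v\rangle = 0$ for all $v \in \calK_j$, and in particular for $v = s_j$. The one further fact needed is the sign and size of $\langle H_k s_j + g_k, p_j\rangle = -r_j^\top p_j$. Exactly as in the proof of Lemma~\ref{lem: cg minimiser}, $p_j = r_j + \beta_j p_{j-1}$ with $r_j \perp p_{j-1}$ gives $r_j^\top p_j = \norm{r_j}^2$. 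Conjugacy gives $p_j^\top H_k s_j = 0$, since $s_j \in \operatorname{span}\{p_0,\dots,p_{j-1}\}$; I will check that this holds for the current direction even when $p_j^\top H_k p_j \leq 0$, as it depends only on the recurrences up to index $j$.

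Expanding then gives
\begin{equation*}
\langle H_k s_k + g_k, s_k\rangle
 = \langle H_k s_j + g_k, s_j\rangle + \tau\langle H_k s_j + g_k, p_j\rangle + \tau\, p_j^\top H_k s_j + \tau^2 p_j^\top H_k p_j ,
\end{equation*}
in which the first and third terms vanish and the second equals $-\tau\norm{r_j}^2$. What remains is
\begin{equation*}
\langle H_k s_k + g_k, s_k\rangle = \tau\bigl(\tau\, p_j^\top H_k p_j - \norm{r_j}^2\bigr).
\end{equation*}

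The two cases now split cleanly. In case (b), $p_j^\top H_k p_j \leq 0$ and $\tau > 0$, so the bracket is at most $-\norm{r_j}^2 \leq 0$. In case (a), $\tau \leq \alpha_j = \norm{r_j}^2/p_j^\top H_k p_j$ gives $\tau\, p_j^\top H_k p_j \leq \norm{r_j}^2$, so the bracket is again nonpositive. Either way~\eqref{eqn: galerkin ineq} follows.

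The main obstacle is justifying that the identities at $s_j$ survive when the truncation happens at the very first inner iteration, and when curvature fails on $p_j$ itself. For $j = 0$ everything is trivial, since $s_0 = 0$ and $p_0 = r_0$, so the computation reduces to $\tau(\tau\, g_k^\top H_k g_k - \norm{g_k}^2)$. For $j \geq 1$ I would argue that all quantities used involve only $p_0,\dots,p_{j-1}$, on which the curvature was positive, together with the standard CG orthogonality relations. Those relations are algebraic consequences of the recurrences~\eqref{eqn: cg recurrences} on $\calK_j$ and do not require definiteness on $p_j$.
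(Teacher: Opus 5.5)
Your proposal is correct and follows the paper's proof essentially line for line. It uses the same three facts at $s_j$ ($r_j \perp \calK_j \ni s_j$, $p_j^\top H_k s_j = 0$, $r_j^\top p_j = \norm{r_j}^2$), reaches the same identity $\langle H_k s_k + g_k, s_k\rangle = \tau(\tau\, p_j^\top H_k p_j - \norm{r_j}^2)$, and closes with the same two-case sign argument; your one addition, that positive curvature on $p_0,\dots,p_{j-1}$ together with conjugacy gives the definiteness on $\calK_j$ needed to invoke Lemma~\ref{lem: cg minimiser}, is a check the paper leaves implicit.
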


\begin{proof}
    The proof uses three properties of the iteration,
    \begin{equation}
        r_j \perp \calK_j \ni s_j ,
        \qquad
        p_j^\top H_k s_j = 0 ,
        \qquad
        r_j^\top p_j = \norm{r_j}^2 .
        \label{eqn: three facts}
    \end{equation}
    The first is Lemma~\ref{lem: cg minimiser}. The second holds because
    $s_j = \sum_{i<j}\alpha_i p_i$ and $p_j^\top H_k p_i = 0$ for $i < j$. The third follows from
    $p_j = r_j + \beta_j p_{j-1}$ and $r_j \perp p_{j-1}$, an element of $\calK_j$.

    Now $\nabla m_k(s_k) = g_k + H_k(s_j + \tau p_j) = -r_j + \tau H_k p_j$, so
    \begin{align}
        \langle H_k s_k + g_k,\, s_k\rangle
          &= \langle -r_j + \tau H_k p_j,\ s_j + \tau p_j\rangle \notag\\
          &= \underbrace{-r_j^\top s_j}_{=\,0}
             \;-\; \tau\, \underbrace{r_j^\top p_j}_{=\,\norm{r_j}^2}
             \;+\; \tau\, \underbrace{p_j^\top H_k s_j}_{=\,0}
             \;+\; \tau^2\, p_j^\top H_k p_j \notag\\
          &= \tau\Bigl( \tau\, p_j^\top H_k p_j - \norm{r_j}^2 \Bigr) .
        \label{eqn: boundary identity}
    \end{align}
    In case~(a), $\tau \leq \alpha_j = \norm{r_j}^2/(p_j^\top H_kp_j)$ gives
    $\tau\, p_j^\top H_kp_j \leq \norm{r_j}^2$, so the bracket is non-positive. In case~(b) the
    bracket is at most $-\norm{r_j}^2 < 0$ outright, the step being taken along a direction of
    non-positive curvature. Since $\tau > 0$, \eqref{eqn: galerkin ineq} follows.
\end{proof}

\begin{remark}[The hypothesis $\tau \leq \alpha_j$ needs no assuming]
\label{rem: tau below alpha}
    That hypothesis is the geometric content of truncation. The scalar $\alpha_j$ minimises $m_k$
    along $p_j$, and the iteration truncates precisely because $s_j + \alpha_jp_j$ lies outside
    the ball. Steihaug's monotonicity gives $s_j^\top p_j \geq 0$, so
    $t \mapsto \norm{s_j + t p_j}$ increases on $t \geq 0$ \citep[Thm.~2.1]{Stei83} and the
    boundary is met before the line minimiser, whence $\tau \leq \alpha_j$. The truncated step
    stops short of the line minimiser, and the directional derivative of $m_k$ along $p_j$ is
    still negative at $s_k$.
\end{remark}

\subsection{The condition under truncation}
\label{app: mqd proposition}

\begin{proposition}[Quadratic model decrease for truncated conjugate gradients]
\label{prop: mqd}
    Let $s_k$ be produced by Algorithm~\ref{alg: SteihaugCG} applied to $m_k$, whether the iteration
    stops in the interior or on the boundary. Then
    \begin{equation}
        m_k(0) - m_k(s_k) \;\geq\; \tfrac12\, s_k^\top H_k s_k .
        \label{eqn: half energy}
    \end{equation}
    If moreover $\lambda_{\min}(H_k) \geq \bar\delta > 0$, then~\eqref{eqn: mqd} holds with
    $\mykappa{mqd} = \tfrac12\bar\delta$.
\end{proposition}

\begin{proof}
    Lemma~\ref{lem: cg minimiser} in the interior case and Lemma~\ref{lem: truncated step} on the
    boundary both give $\langle H_ks_k + g_k,\, s_k\rangle \leq 0$, that is
    $-g_k^\top s_k \geq s_k^\top H_k s_k$. Hence
    \begin{equation*}
        m_k(0) - m_k(s_k)
          = -g_k^\top s_k - \tfrac12 s_k^\top H_k s_k
          \;\geq\; s_k^\top H_k s_k - \tfrac12 s_k^\top H_k s_k
          = \tfrac12 s_k^\top H_k s_k ,
    \end{equation*}
    which is~\eqref{eqn: half energy}, and $s_k^\top H_ks_k \geq \bar\delta\norm{s_k}^2$ gives the
    second assertion.
\end{proof}

\begin{remark}[What this replaces]
\label{rem: mqd replaces}
    The usual argument treats the boundary separately. It invokes Steihaug's monotonicity to place
    the unconstrained minimiser outside the region,
    $\norm{s_k} = \Delta_k \leq \norm{H_k^{-1}g_k} \leq \norm{g_k}/\bar\delta$, and then applies
    Cauchy decrease to obtain
    \begin{equation*}
        m_k(0) - m_k(s_k) \;\geq\; \frac{\mykappa{mdc}\,\bar\delta}{2}
            \min\Bigl\{\frac{\bar\delta}{\mykappa{umh}},\, 1\Bigr\}\norm{s_k}^2
        = \frac{\mykappa{mdc}\,\bar\delta^{\,2}}{2\,\mykappa{umh}}\norm{s_k}^2 ,
    \end{equation*}
    the minimum collapsing because $\bar\delta \leq \mykappa{umh}$ always.
    Proposition~\ref{prop: mqd} supersedes this in three ways. It is one case rather than two. It
    uses neither Steihaug's monotonicity, nor Cauchy decrease, nor the existence of $H_k^{-1}$.
    And since $\mykappa{mdc} \leq 1$ and $\bar\delta \leq \mykappa{umh}$, the constant
    $\bar\delta/2$ is at least as large as
    $\mykappa{mdc}\bar\delta^{\,2}/(2\mykappa{umh})$, usually by a wide margin.
\end{remark}

\begin{remark}[Two results of the same shape]
\label{rem: mqd prior}
    Two results in the literature bound the same quantity, and neither delivers what
    Proposition~\ref{prop: mqd} delivers here. \citet{Yuan2000} proves that the truncated step
    attains at least half the reduction of the global minimiser of the subproblem, for a convex
    quadratic. That bound is sharper as a statement about the solver and weaker as a route
    to~\eqref{eqn: mqd}, since it is stated against a quantity that only the exact solution
    supplies. \citet[Lem.~8]{CurtisRobinsonRoyerWright2021} reach an inequality of the shape
    of~\eqref{eqn: mqd} for an indefinite $H_k$, by running the inner iteration on $H_k$ shifted
    by a positive multiple of the identity. That buys the constant with a modification of the
    algorithm rather than an assumption on the model. \citet{GouldLucidiRomaToint1999} avoid the question, minimising over the
    Krylov space subject to the constraint rather than truncating within it.
    Proposition~\ref{prop: mqd} assumes neither convexity nor a shift,
    and~\eqref{eqn: half energy} holds for the algorithm as
    Section~\ref{subsec: subsolvers} runs it.
\end{remark}

\begin{remark}[What is still needed, and what is not]
\label{rem: mqd indefinite}
    Inequality~\eqref{eqn: galerkin ineq} is a property of the algorithm alone. It holds for any
    symmetric $H_k$, definite or not, and it refers to no radius rule. Everything
    model-dependent sits in the last step of Proposition~\ref{prop: mqd}, the passage from
    $\tfrac12 s_k^\top H_ks_k$ to $\tfrac12\bar\delta\norm{s_k}^2$.

    That step needs a lower bound on $\lambda_{\min}(H_k)$, which does not follow from directional
    consistency conditions of \citet{DennMore74} type, those controlling $H_k$ only along $s_k$.
    Where the bound fails, with $H_k$ indefinite and the inner iteration truncated on negative
    curvature, \eqref{eqn: half energy} is still true and vacuous, $s_k^\top H_ks_k$ being
    negative. A lower bound on the decrease then has to come from elsewhere.

    In that regime the radius rule can supply what the model cannot. Tie the radius to the
    criticality measure, $\Delta_k \leq c\norm{g_k}$ with $c = \overline{\mu}$ for capped $\Rgrad$
    at every iteration, or $c = \zeta$ for $\Rdfo$ at every iteration passing the criticality
    test. Then $\norm{s_k} = \Delta_k$ on the boundary gives $\norm{g_k} \geq \norm{s_k}/c$, and
    Cauchy decrease returns
    \begin{equation}
        m_k(0) - m_k(s_k) \;\geq\; \frac{\mykappa{mdc}}{2c}
            \min\Bigl\{\frac{1}{c\,\mykappa{umh}},\, 1\Bigr\}\norm{s_k}^2 ,
        \label{eqn: mqd from radius}
    \end{equation}
    in which $c$ is a user parameter rather than a property of the model, and no positive
    definiteness enters.

    Two limits of~\eqref{eqn: mqd from radius} are worth stating. It needs
    $\norm{s_k} = \Delta_k$, so it reaches only the iterations on which the constraint binds,
    and \PartII{} invokes~\eqref{eqn: mqd} under hypotheses that do not include activity. Where
    the constraint has released, the route closes. And it buys the constant with a parameter the
    user has to set small, which is the same choice Remark~\ref{rem: cauchy degeneration} shows
    turns the method into gradient descent. Running the inner iteration on a shifted operator, as
    in \citet{CurtisRobinsonRoyerWright2021}, pays neither price. Nor does the
    \texttt{EigenPoint} wrapper of Section~\ref{subsec: subsolvers}, which guarantees
    $\tfrac12\abs{\lambda_{\min}}\Delta_k^2$ outright, so that $\norm{s_k} \leq \Delta_k$
    gives~\eqref{eqn: mqd} with $\mykappa{mqd} = \tfrac12\abs{\lambda_{\min}(H_k)}$ wherever
    $H_k$ is indefinite. \citet{RebjockBoumal2025} take a third route, showing that truncation filters the small
    negative eigenvalues out before they can do damage.
\end{remark}

\section{The update mechanisms}
\label{app: update mechanisms}

The updates are grouped by what the radius is tied to. Which ratio selects the branch and
whether the multiplicative factor is fixed or adaptive vary within the groups, and
Table~\ref{tab: roster axes} records both.

\subsection{The criticality measure}
\label{app: measure}

The updates $\Rdfo$, $\Rgrad$, $\Rgrtr$ and $\RAdaptiveGrad$ tie the radius to a criticality
measure $\omega_k$, either the first-order measure $\norm{g_k}$ or a second-order measure
$\tau_k$. Writing an update as $\Rgrad(\omega, \overline{\mu})$ names the measure and the cap
as arguments, so that one rule covers both orders of criticality.
The second-order measure is
\begin{equation}
    \tau_k = \max\bigl\{ \norm{g_k},\, -\lambda_k^- \bigr\} ,
    \label{eqn: app criticality measure}
\end{equation}
with $\lambda_k^-$ the leftmost eigenvalue of the model Hessian $H_k$, taken as $0$ when that
eigenvalue is positive. This is the measure \PartII{} defines, and this study reads $\tau_k$
in that sense throughout; the first-order analysis does not use the symbol for a criticality
measure at all.
Two consequences are worth stating once. The eigenvalue is the \emph{model}'s and not the
objective's, so $\tau_k$ is a property of the pair $(x_k, H_k)$: a model that is positive
definite by construction reports $\tau_k = \norm{g_k}$ whatever $\nabla^2 f(x_k)$ does. And
where that eigenvalue is estimated rather than formed, the estimate is one-sided, so an
under-resolved estimate understates the negative curvature available and with it $\tau_k$.

\subsection{Standing conventions on the constants}
\label{app: constants}

Unless an update states otherwise, $0 < \gamma_1 \leq \gamma_2 < 1 < \gamma_3$ and
$0 \leq \eta \leq \eta_1 \leq \eta_2 < 1$ with $\eta_1 \neq 0$, where $\eta$ is the
acceptance threshold of Step~3 and $\eta_1$ and $\eta_2$ are the update thresholds. Caps
satisfy $\overline{\mu} \in (0, +\infty]$ with $\mu_0 \leq \overline{\mu}$. Two updates
depart from this. \Rdeltastep\ admits $\gamma_2 \leq 1$, and \RAdaptstep\ requires
$\eta < \eta_1$ strictly.

\subsection{Anchored to the previous radius}
\label{app: anchored delta}

The radius carries no information about the problem beyond its own history and the ratio.

      \begin{update}{\Rdelta}[R-delta: multiplicative factor on $\Delta$] \label{update:multiplicative factor}
            \begin{equation}
                \Delta_{k+1} \in
                \begin{cases}
                    [\gamma_1 \Delta_k,\, \gamma_2 \Delta_k), & \text{if } \rho_k < \eta_1 \quad \text{(unsuccessful)}, \\
                    [\gamma_2 \Delta_k,\, \Delta_k), & \text{if } \eta_1 \leq \rho_k < \eta_2 \quad \text{(successful)}, \\
                    [\Delta_k,\, \gamma_3 \Delta_k], & \text{if } \rho_k \geq \eta_2 \quad \text{(very successful)}.
                \end{cases}
                \label{eqn:appendix R-delta}
            \end{equation}
        \end{update}

        \begin{update}{\Rdeltastep}           
            \[
            \Delta_{k+1} \in 
            \begin{cases}
                [\gamma_1 \|s_k\|, \gamma_2 \|s_k\|) & \text{if } \rho_k < \eta_1, \\
                [\gamma_2 \Delta_k, \Delta_k) & \text{if } \rho_k \in [\eta_1, \eta_2), \\
                [\Delta_k, \gamma_3 \Delta_k] & \text{if } \rho_k \geq \eta_2,
            \end{cases}
            \]
            where \( 0 < \gamma_1 \leq \gamma_2 \leq 1 < \gamma_3 \). 
            \label{eqn:classical rule with aggressive decrease}
        \end{update}  

\subsection{Anchored to the step}
\label{app: anchored step}

The radius is rebuilt from the step actually taken, so a step interior to the region
contracts it even after a successful iteration.

        \begin{update}{\Rstep}[R-step: step-dependent update]
            Let \( 0 < \gamma_1 \leq \gamma_2 < 1 < \gamma_3 \) and
            \( \eta \leq \eta_1 \leq \eta_2 < 1 \) with $\eta_1 \neq 0$. Then
            \begin{equation}
                \Delta_{k+1} \in
                \begin{cases}
                    [\gamma_1 \norm{s_k},\, \gamma_2 \norm{s_k}), & \text{if } \rho_k < \eta_1, \\
                    [\gamma_2 \norm{s_k},\, \norm{s_k}), & \text{if } \eta_1 \leq \rho_k < \eta_2, \\
                    [\norm{s_k},\, \gamma_3 \norm{s_k}], & \text{if } \rho_k \geq \eta_2 .
                \end{cases}
                \label{update: step-dependent}
            \end{equation}
        \label{update:step-dependent update}
        \end{update}
        The expansion branch is capped at $\gamma_3\norm{s_k}$, the step-based counterpart of the
        bounded expansion allowed by Remark~7 of \PartI{}. Every radius this rule returns is one the
        uncapped rule may return, so the results stated there apply to it unchanged.

        \begin{update}{\RAdaptstep}
                \[
                     \Delta_{k+1} = R_{\eta_1}(\rho_k) \|s_k\|,
                \]
                where \( R_{\eta_1} : \mathbb{R} \to \mathbb{R}_+ \) is a \textbf{non-decreasing} function satisfying, with \(0 \leq  \eta  < \eta_1 < 1 \) and \( 0 < \gamma_1 \leq \gamma_2 < 1 < \gamma_3 < M\),
                \begin{align*}
                    & \lim_{t \to -\infty} R_{\eta_1}(t) = \gamma_1 ; \\
                    & \forall \, t < \eta,\,  R_{\eta_1}(t) \leq \gamma_2 ; \\
                    & R_{\eta_1}(\eta_1) = \gamma_3 ; \\
                    & \lim_{t \to +\infty} R_{\eta_1}(t) = M.
                \end{align*}
                
                \label{update: adaptive step-size dependent update}
            \end{update}

\subsection{Anchored to the criticality measure}
\label{app: anchored measure}

The radius is a multiple of $\omega_k$, or is gated by it, so it vanishes with the measure
and the multiplier $\mu_k$ carries the iteration history.

        \begin{update}{$\Rdfo(\omega)$}
        \label{update: rdfo generic}
            Let $\eta \leq \eta_1 < 1$, $0 < \gamma_1 \leq \gamma_2 < 1 < \gamma_3$ and $\zeta > 0$. Then
            \begin{equation}
                \Delta_{k+1} \in
                \begin{cases}
                    [\gamma_1 \Delta_k,\ \gamma_2 \Delta_k], & \rho_k < \eta_1, \\[2pt]
                    \bigl\{\gamma_2 \Delta_k\bigr\}, & \rho_k \geq \eta_1
                        \text{ and } \Delta_k > \zeta\,\omega_k, \\[2pt]
                    [\Delta_k,\ \gamma_3 \Delta_k], & \rho_k \geq \eta_1
                        \text{ and } \Delta_k \leq \zeta\,\omega_k.
                \end{cases}
                \label{eqn: rdfo rule}
            \end{equation}
        \end{update}

        \begin{update}{$\Rgrad(\omega, \overline{\mu})$}[Expansion capped at $\overline{\mu}$]
        \label{update: rgrad generic}
            Let $0 < \gamma_1 \leq \gamma_2 < 1 < \gamma_3$ and
            $\eta \leq \eta_1 \leq \eta_2 < 1$. The radius is $\Delta_k = \mu_k\,\omega_k$ with
            $\mu_0 \leq \overline{\mu}$ and
            \begin{equation}
                \mu_{k+1} \in
                \begin{cases}
                    [\gamma_1 \mu_k,\ \gamma_2 \mu_k], & \rho_k < \eta_1, \\[2pt]
                    [\gamma_2 \mu_k,\ \mu_k), & \eta_1 \leq \rho_k < \eta_2, \\[2pt]
                    \bigl\{\min\{\gamma_3 \mu_k,\ \overline{\mu}\}\bigr\},
                        & \rho_k \geq \eta_2 \text{ and } \norm{s_k} > \tfrac12 \Delta_k, \\[2pt]
                    \{\mu_k\}, & \text{otherwise}.
                \end{cases}
                \label{eqn: rgrad rule}
            \end{equation}
        \end{update}

        \begin{update}{$\Rgrad(\omega)$}[Uncapped variant]
        \label{update: rgrad uncapped rule}
            Update~\ref{update: rgrad generic} with $\overline{\mu} = +\infty$, so that the expansion
            branch reads $\mu_{k+1} = \gamma_3\mu_k$ and the last branch
            $\mu_{k+1} = \mu_k$.
        \end{update}

            \begin{update}{$\RAdaptiveGrad(\omega)$}[radius tied to the measure, multiplier driven by the adaptive factor]
            \label{update: radaptgrad}
                Let $0 \leq \eta < \eta_1 \leq \eta_2 < 1$, $0 < \gamma_1 \leq \gamma_2 < 1 < \gamma_3 < M$, and let $R_{\eta_1}$ be the non-decreasing function of
                Update~\ref{update: adaptive step-size dependent update}. The radius is
                $\Delta_k = \mu_k\,\omega_k$ and
                \begin{equation}
                    \mu_{k+1} =
                    \begin{cases}
                        \mu_k, & \rho_k \geq \eta_2 \text{ and } \norm{s_k} \leq \tfrac12 \Delta_k, \\[2pt]
                        R_{\eta_1}(\rho_k)\,\mu_k, & \text{otherwise},
                    \end{cases}
                    \label{eqn: radaptgrad rule}
                \end{equation}
                the measure $\omega_k$ being as in Update~\ref{update: rgrad generic}.
            \end{update}

            \begin{update}{$\RAdaptiveGrad(\omega, \overline{\mu})$}[Capped variant]
            \label{update: radaptgrad capped}
                Update~\ref{update: radaptgrad} with $\mu_0 \leq \overline{\mu}$ and the second branch
                replaced by $\mu_{k+1} = \min\{R_{\eta_1}(\rho_k)\,\mu_k,\ \overline{\mu}\}$.
            \end{update}

\subsection{Driven by the retrospective ratio}
\label{app: retrospective}

The branch is selected by $\widetilde{\rho}_{k+1} = \frac{f(x_k) - f(x_{k+1})}{m_{k+1}(-s_k) - m_{k+1}(0)}$, which uses the model built at the new iterate.

        \begin{update}{\Rrtr}[R-RTR: retrospective update] \label{update:retrospective}
            At iteration $k \geq 0$, with $ \widetilde{\rho}_{k+1}
              = \frac{f(x_k) - f(x_{k+1})}{m_{k+1}(-s_k) - m_{k+1}(0)}$,  $k \geq 0$, 
            and retrospective thresholds
            $0 < \widetilde\eta_1 \leq \widetilde\eta_2 < 1$:
            if $\rho_k < \eta$ (step rejected), set
            \[
                \Delta_{k+1} =
                \begin{cases}
                    \min\!\big\{\gamma_2 \norm{s_k},\, \gamma_1\,\Delta_k\big\},
                        & \rho_k < 0,\\[4pt]
                    \gamma_2 \norm{s_k}, & \text{otherwise},
                \end{cases}
            \]
            if $\rho_k \geq \eta$ (step accepted), set
            \[
                \Delta_{k+1} =
                \begin{cases}
                    \max\{\gamma_3 \norm{s_k},\; \Delta_k\},
                        & \widetilde{\rho}_{k+1} \geq \widetilde\eta_2,\\[4pt]
                    \Delta_k,
                        & \widetilde{\rho}_{k+1} \in [\widetilde\eta_1, \widetilde\eta_2),\\[4pt]
                    \gamma_2 \norm{s_k},
                        & \widetilde{\rho}_{k+1} \in [0, \widetilde\eta_1),\\[4pt]
                    \min\!\big\{\gamma_2\norm{s_k},\, \gamma_1\,\Delta_k\big\},
                        & \widetilde{\rho}_{k+1} < 0.
                \end{cases}
            \]
        \end{update}

        \begin{update}{$\Rgrtr(\omega, \overline{\mu})$}[radius tied to the measure, branch selected by the
        retrospective ratio]
        \label{update: grtr}
            Let $0 < \gamma_1 \leq \gamma_2 < 1 < \gamma_3$,
            $\eta \leq \eta_1 \leq \eta_2 < 1$ and
            $0 < \widetilde\eta_1 \leq \widetilde\eta_2 < 1$. The radius is
            $\Delta_k = \mu_k\,\omega_k$ with $\mu_0 \leq \overline{\mu}$ and
            \begin{equation}
                \mu_{k+1} \in
                \begin{cases}
                    [\gamma_1\mu_k,\ \gamma_2\mu_k], & \rho_k < \eta_1, \\[2pt]
                    [\gamma_1\mu_k,\ \gamma_2\mu_k], & \rho_k \geq \eta_1
                        \text{ and } \widetilde\rho_{k+1} < \widetilde\eta_1, \\[2pt]
                    [\gamma_2\mu_k,\ \mu_k), & \rho_k \geq \eta_1
                        \text{ and } \widetilde\rho_{k+1} \in [\widetilde\eta_1, \widetilde\eta_2), \\[2pt]
                    \bigl\{\min\{\gamma_3\mu_k,\ \overline{\mu}\}\bigr\},
                        & \rho_k \geq \eta_1,\ \widetilde\rho_{k+1} \geq \widetilde\eta_2
                        \text{ and } \norm{s_k} > \tfrac12\Delta_k, \\[2pt]
                    \{ \mu_k \}, & \text{otherwise},
                \end{cases}
                \label{eqn: grtr rule}
            \end{equation}
            the measure $\omega_k$ and the cap $\overline{\mu} \in (0, +\infty]$ being as in
            Update~\ref{update: rgrad generic}.
        \end{update}

\section{What the anchors do to the radius}
\label{app: p1 asymptotics}

The sections above separate the mechanisms by what the radius is anchored to. On one
problem that axis is visible directly. The problem is $f(x) = \rme^{-x}$ from $x_0 = 0$, the P1
of Section~\ref{subsec: P1}, where the Newton step is $1$ everywhere, every iteration is very
successful, and nothing but the rule moves the radius. The figures below are drawn on it over a
few hundred iterations; the values Section~\ref{subsec: P1} reports come from a longer run of the
same configuration, so these are read for the shape and that section for the numbers.
Figure~\ref{fig: p1 dichotomy} shows which anchors drive the radius to zero, and
Figure~\ref{fig: p1 rate} the rate at which they do.

\begin{figure}[htbp]
    \centering
    \begin{subfigure}[b]{\linewidth}
        \centering
        \includegraphics[width=0.86\linewidth]{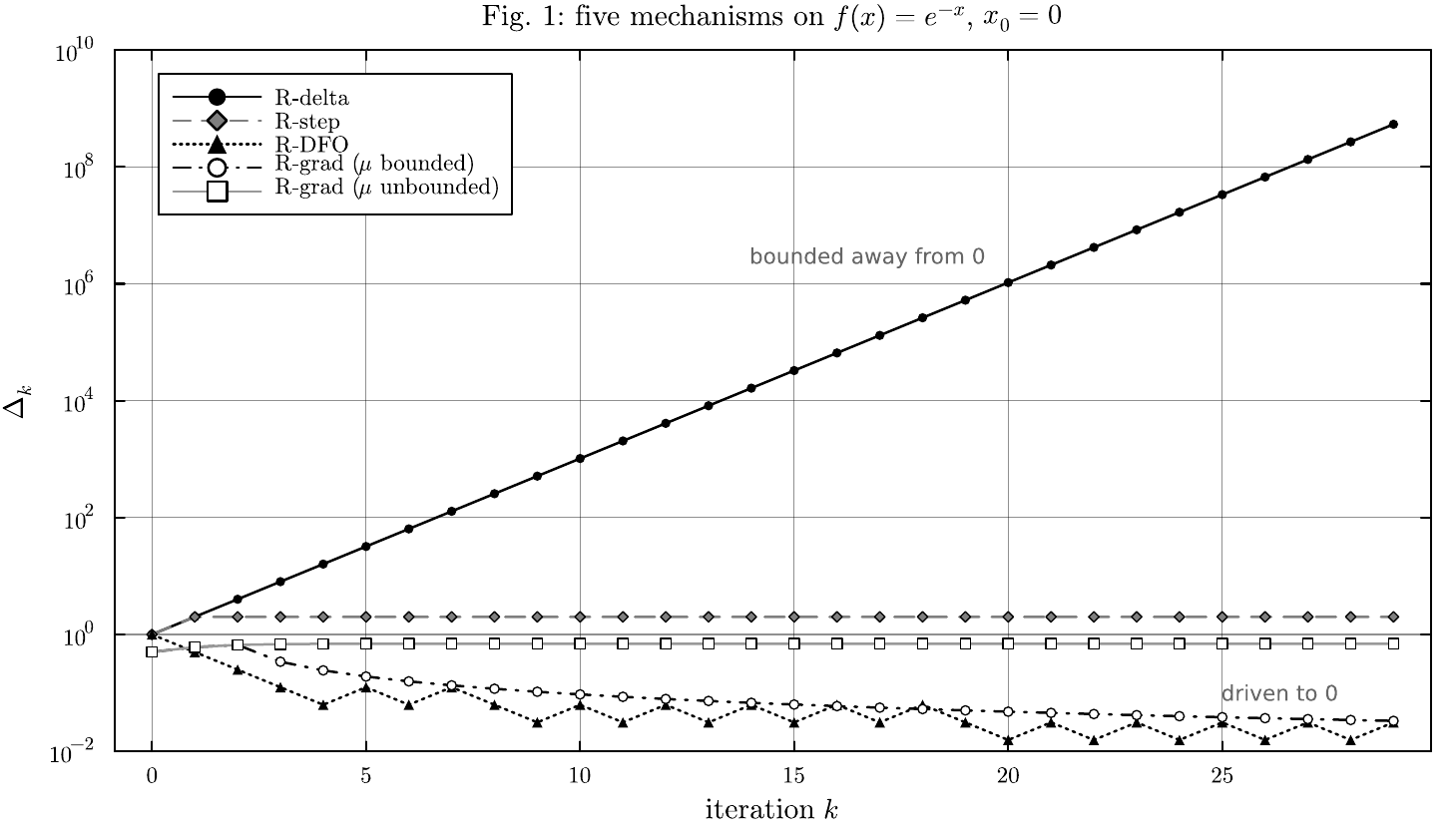}
        \caption{all five mechanisms}
        \label{fig: p1 all mechanisms}
    \end{subfigure}

    \medskip
    \begin{subfigure}[b]{\linewidth}
        \centering
        \includegraphics[width=0.86\linewidth]{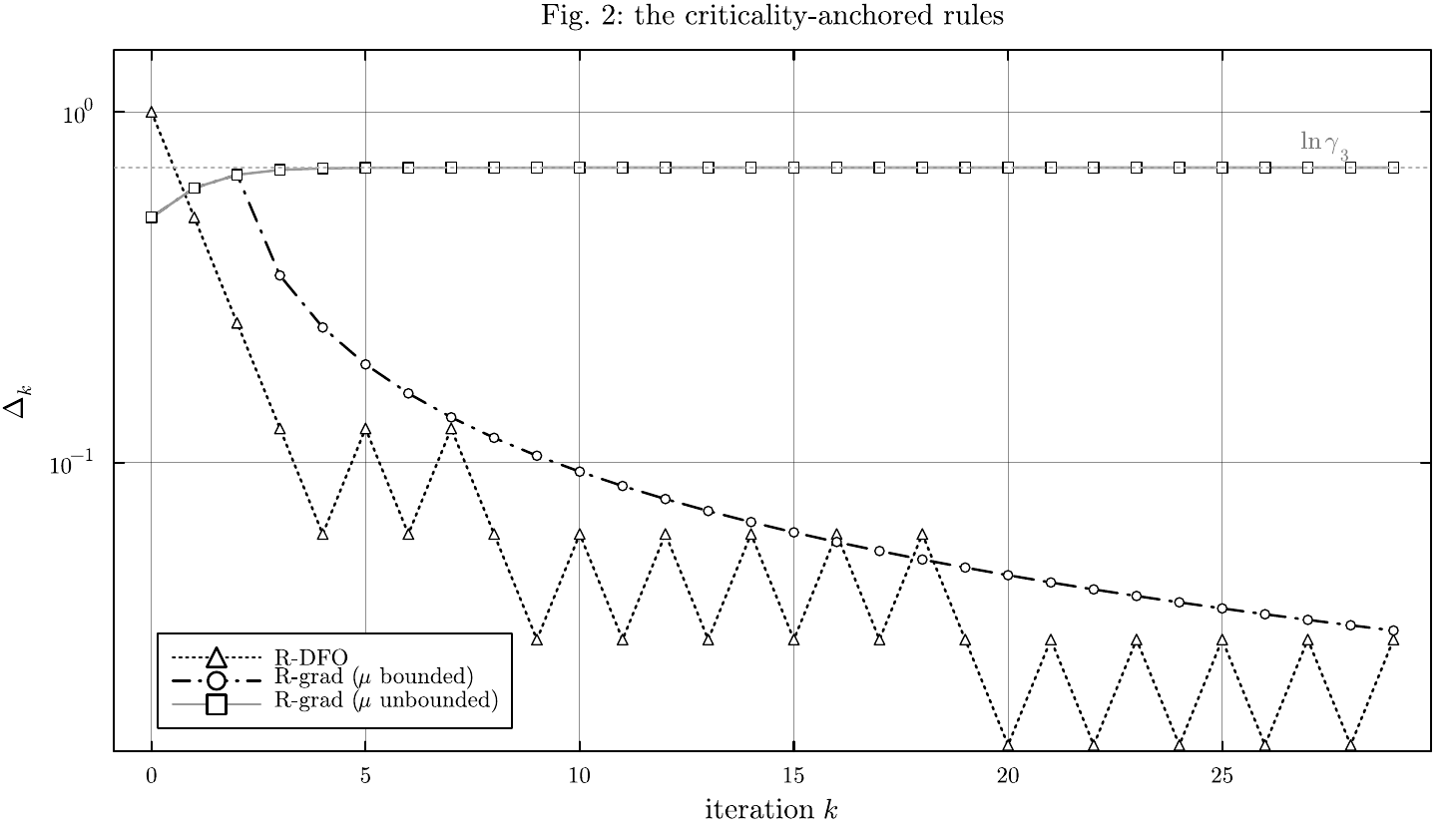}
        \caption{the criticality-anchored three}
        \label{fig: p1 anchored}
    \end{subfigure}
    \caption{The anchoring dichotomy on $f(x) = \rme^{-x}$. Above, $\Rdelta$ grows
    geometrically and $\Rstep$ and uncapped $\Rgrad$ settle at a constant, while $\Rdfo$ and
    capped $\Rgrad$ are driven to zero: the radius vanishes under a criticality anchor and not
    otherwise.
    Below, the three criticality-anchored rules on their own. Uncapped $\Rgrad$ flattens at
    $\ln\gthree$, which is the whole of the difference the cap makes, and the two capped rules
    fall. $\Rdfo$ falls in a sawtooth because its criticality test fires only when
    $\Delta_k > \zeta\omega_k$.}
    \label{fig: p1 dichotomy}
\end{figure}

\begin{figure}[htbp]
    \centering
    \begin{subfigure}[b]{\linewidth}
        \centering
        \includegraphics[width=0.86\linewidth]{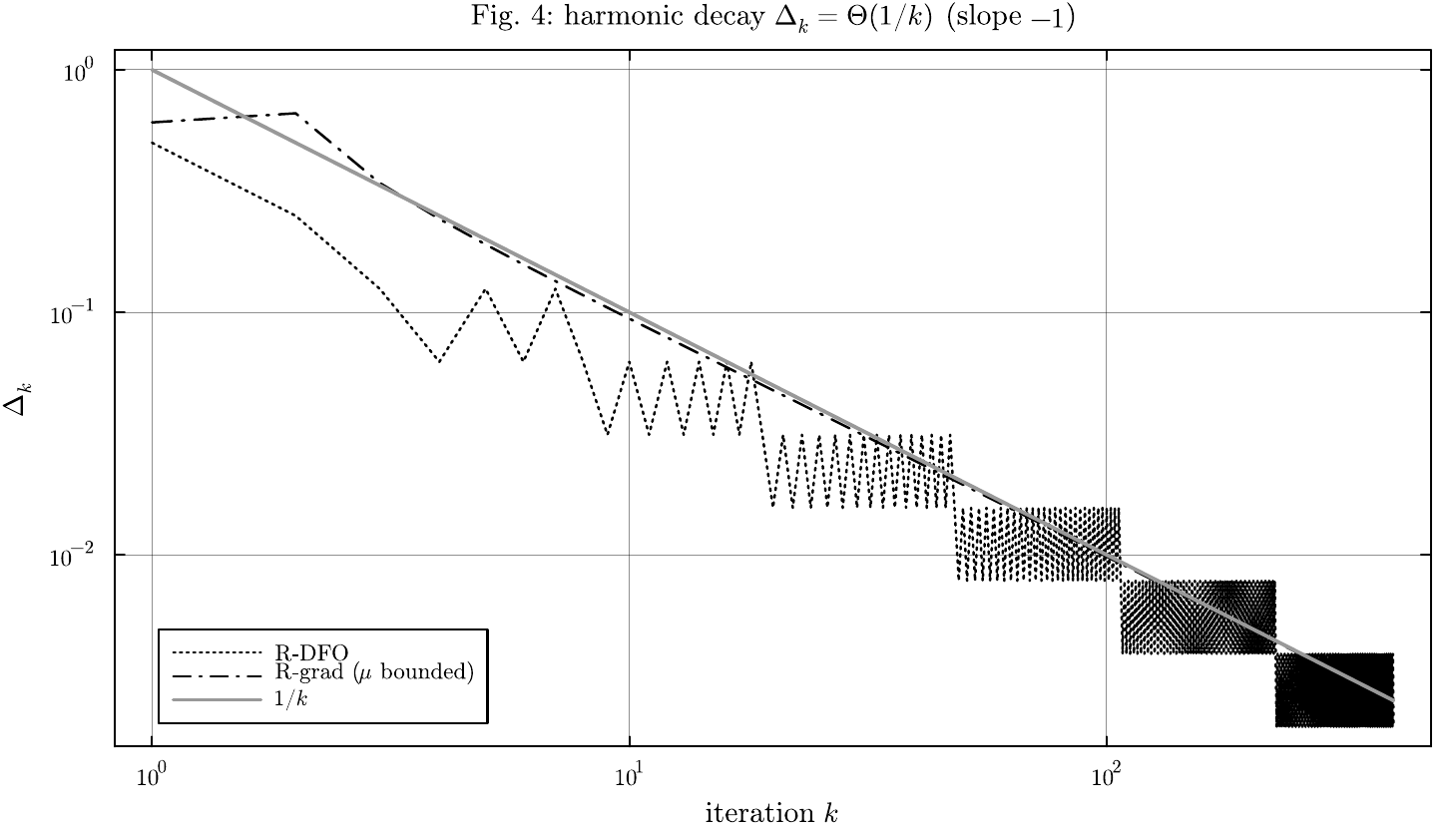}
        \caption{$\Delta_k$ against $1/k$}
        \label{fig: p1 harmonic}
    \end{subfigure}

    \medskip
    \begin{subfigure}[b]{\linewidth}
        \centering
        \includegraphics[width=0.86\linewidth]{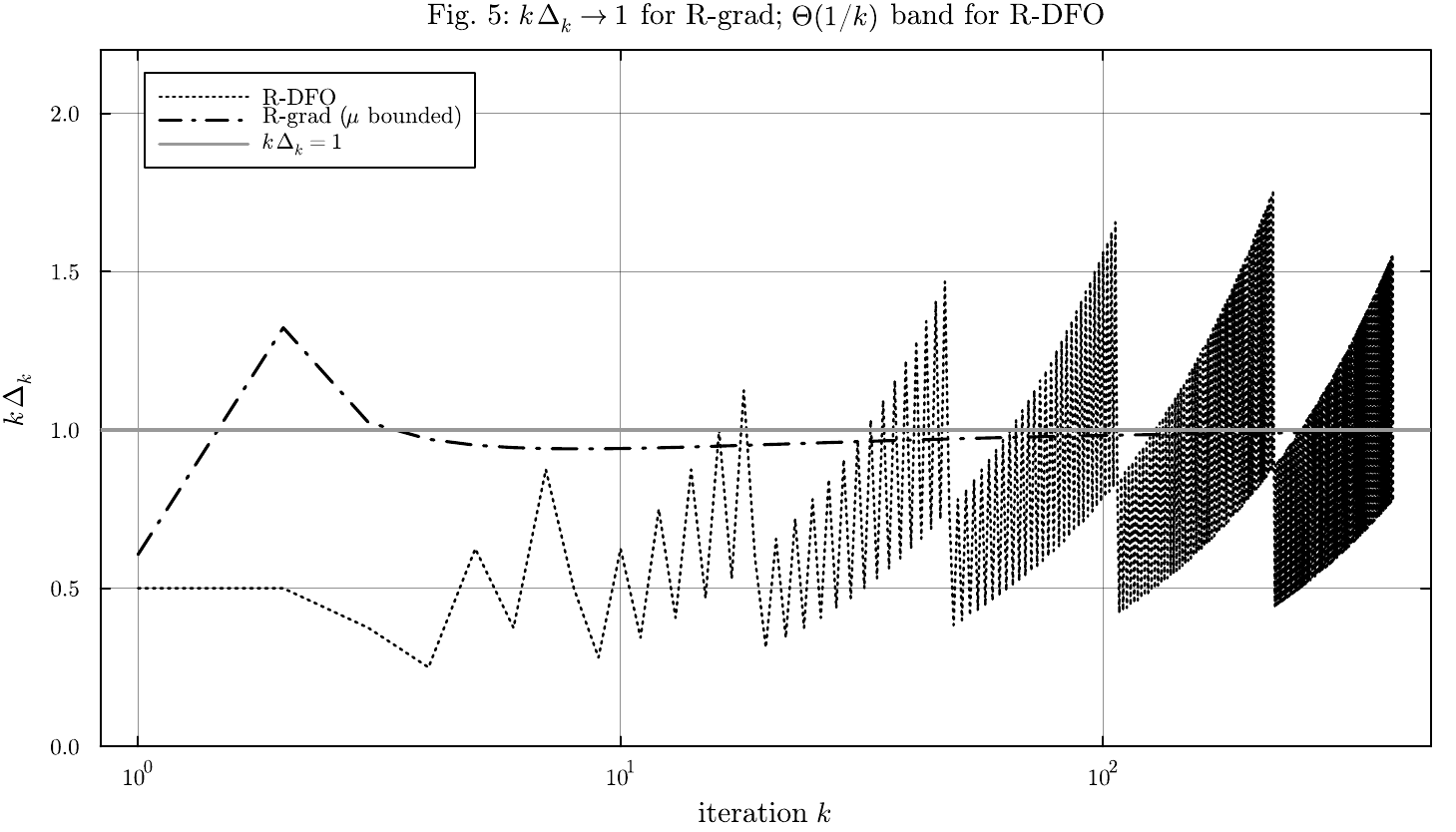}
        \caption{$k\Delta_k$}
        \label{fig: p1 k delta}
    \end{subfigure}
    \caption{The rate at which the anchored radius falls. Above, on log axes, against a
    reference line of slope $-1$. Below, the same runs as $k\Delta_k$, where a harmonic decay is
    a horizontal line. Capped $\Rgrad$ approaches $k\Delta_k = 1$, which is the constant
    \PartII{} derives and Section~\ref{subsec: P1} measures to five figures. $\Rdfo$ stays inside
    a band rather than converging, so it is $\Theta(1/k)$ without a limiting constant, and the
    distinction is what separates a rule that contracts every iteration from one that contracts
    only when its test fires.}
    \label{fig: p1 rate}
\end{figure}

\section{Cost and outcome tables for the second-order campaign}
\label{app: so tables}

Source archive \texttt{exp\_2026-08-31\_10-38-22\_second\_order\_profiles}. Every total is
taken over every problem of the panel, met or not, with no filtering on success and no mean
over survivors, so the columns are comparable across rows without a survivorship correction.
Wall-clock time is censored by the $120$ second limit and is reported for completeness only.
Tables~\ref{tab: so cost p1}, \ref{tab: so cost p2} and~\ref{tab: so cost p3} give the cost
on Panels P1, P2 and P3, and Table~\ref{tab: so status p3} how each run of Panel P3 ended.

\begin{table}[htbp]
    \centering
    \small
    \begin{tabular}{@{}llrrrrrr@{}}
        \toprule
        Rule & arm & certified & $f$ evals & $g$ evals & H-prods & H evals & wall (s) \\
        \midrule
        \Rdelta   & 2nd & $167/196$ & $66\,616$ & $56\,130$ & $1\,406\,250$ & $53\,089$ & $1173.4$ \\
        \Rstep    & 2nd & $136/196$ & $416\,228$ & $409\,455$ & $1\,839\,674$ & $397\,158$ & $1348.0$ \\
        \Rrtr     & 2nd & $167/196$ & $62\,811$ & $54\,071$ & $1\,224\,300$ & $44\,264$ & $1171.3$ \\
        \Rdfo     & 2nd & $160/196$ & $87\,969$ & $73\,252$ & $1\,933\,452$ & $69\,421$ & $1260.6$ \\
        \Rgrad    & 2nd & $161/196$ & $80\,423$ & $68\,113$ & $698\,500$ & $64\,448$ & $1016.5$ \\
        \Rgrtr    & 2nd & $160/196$ & $75\,434$ & $63\,999$ & $764\,438$ & $59\,946$ & $1131.6$ \\
        \bottomrule
    \end{tabular}
    \caption{Panel P1, the second-order arm alone on all $196$ problems, scored on the
    second-order criterion. No first-order row appears, so nothing in this panel compares the
    two arms.}
    \label{tab: so cost p1}
\end{table}

\begin{table}[htbp]
    \centering
    \small
    \begin{tabular}{@{}llrrrrrr@{}}
        \toprule
        Rule & arm & met & $f$ evals & $g$ evals & H-prods & H evals & wall (s) \\
        \midrule
        \Rdelta   & 1st & $165/196$ & $81\,451$ & $66\,824$ & $823\,786$ & $0$ & $1075.4$ \\
                  & 2nd & $167/196$ & $66\,613$ & $56\,126$ & $1\,405\,921$ & $53\,085$ & $1173.9$ \\
        \Rstep    & 1st & $138/196$ & $407\,429$ & $401\,848$ & $1\,692\,614$ & $0$ & $1248.4$ \\
                  & 2nd & $136/196$ & $416\,230$ & $409\,456$ & $1\,839\,677$ & $397\,159$ & $1347.9$ \\
        \Rrtr     & 1st & $169/196$ & $44\,398$ & $38\,979$ & $641\,934$ & $0$ & $1084.9$ \\
                  & 2nd & $168/196$ & $62\,821$ & $54\,086$ & $1\,224\,200$ & $44\,279$ & $1170.5$ \\
        \Rdfo     & 1st & $162/196$ & $106\,097$ & $90\,697$ & $853\,974$ & $0$ & $1205.3$ \\
                  & 2nd & $160/196$ & $87\,940$ & $73\,222$ & $1\,933\,404$ & $69\,392$ & $1257.6$ \\
        \Rgrad    & 1st & $163/196$ & $60\,884$ & $52\,660$ & $664\,489$ & $0$ & $1065.8$ \\
                  & 2nd & $161/196$ & $80\,232$ & $67\,965$ & $697\,671$ & $64\,300$ & $1016.7$ \\
        \Rgrtr    & 1st & $160/196$ & $74\,907$ & $67\,405$ & $705\,225$ & $0$ & $1076.5$ \\
                  & 2nd & $160/196$ & $75\,262$ & $63\,849$ & $763\,614$ & $59\,796$ & $1132.3$ \\
        \bottomrule
    \end{tabular}
    \caption{Panel P2, both arms on all $196$ problems, scored on the first-order criterion.
    Every run halts at its first first-order iterate, so both arms perform the same task and
    the difference between two rows of a rule is what the second-order machinery costs. The
    first-order arm records no Hessian evaluation at all.}
    \label{tab: so cost p2}
\end{table}

\begin{table}[htbp]
    \centering
    \small
    \begin{tabular}{@{}llrrrrrr@{}}
        \toprule
        Rule & arm & certified & $f$ evals & $g$ evals & H-prods & H evals & wall (s) \\
        \midrule
        \Rdelta   & 1st & $0/185$ & $66\,328$ & $55\,736$ & $774\,382$ & $0$ & $1049.5$ \\
                  & 2nd & $158/185$ & $63\,465$ & $53\,089$ & $1\,266\,347$ & $53\,089$ & $1150.9$ \\
        \Rstep    & 1st & $0/185$ & $389\,344$ & $386\,313$ & $1\,623\,356$ & $0$ & $1222.7$ \\
                  & 2nd & $126/185$ & $401\,336$ & $397\,158$ & $1\,622\,548$ & $397\,158$ & $1297.4$ \\
        \Rrtr     & 1st & $0/185$ & $30\,738$ & $28\,601$ & $588\,252$ & $0$ & $1050.8$ \\
                  & 2nd & $157/185$ & $49\,809$ & $44\,264$ & $1\,050\,838$ & $44\,264$ & $1113.1$ \\
        \Rdfo     & 1st & $0/185$ & $88\,071$ & $77\,498$ & $766\,452$ & $0$ & $1069.4$ \\
                  & 2nd & $151/185$ & $82\,684$ & $69\,421$ & $1\,757\,668$ & $69\,421$ & $1131.5$ \\
        \Rgrad    & 1st & $0/185$ & $46\,598$ & $42\,191$ & $620\,303$ & $0$ & $1041.2$ \\
                  & 2nd & $151/185$ & $76\,181$ & $64\,448$ & $558\,919$ & $64\,448$ & $989.5$ \\
        \Rgrtr    & 1st & $0/185$ & $59\,942$ & $55\,543$ & $648\,661$ & $0$ & $1046.8$ \\
                  & 2nd & $150/185$ & $71\,168$ & $59\,946$ & $600\,147$ & $59\,946$ & $1104.5$ \\
        \bottomrule
    \end{tabular}
    \caption{Panel P3, both arms on the $185$ problems with $n \leq 200$, scored on the
    second-order criterion. The zeros in the first-order column are structural and
    Section~\ref{subsec: so limits} says why. Two columns coincide exactly in every
    second-order row, the gradient and Hessian evaluations, because the dense curvature
    estimate is formed once per iterate below $n = 200$. The certificate therefore costs one
    Hessian per gradient, whatever the rule, and the six rows differ only in how many
    gradients the rule needs.}
    \label{tab: so cost p3}
\end{table}

\begin{table}[htbp]
    \centering
    \small
    \begin{tabular}{@{}llrrrrrr@{}}
        \toprule
        & & \multicolumn{2}{c}{criterion met} & \multicolumn{4}{c}{run ended on} \\
        \cmidrule(lr){3-4}\cmidrule(l){5-8}
        Rule & arm & 1st-order & 2nd-order & iteration cap & time cap & stall & exception \\
        \midrule
        \Rdelta   & 1st & $155$ & $0$ & $1$ & $8$ & $21$ & $0$ \\
                  & 2nd & $0$ & $158$ & $2$ & $9$ & $16$ & $0$ \\
        \Rstep    & 1st & $128$ & $0$ & $31$ & $9$ & $17$ & $0$ \\
                  & 2nd & $0$ & $126$ & $33$ & $9$ & $17$ & $0$ \\
        \Rrtr     & 1st & $159$ & $0$ & $0$ & $8$ & $18$ & $0$ \\
                  & 2nd & $0$ & $157$ & $1$ & $8$ & $19$ & $0$ \\
        \Rdfo     & 1st & $153$ & $0$ & $4$ & $8$ & $20$ & $0$ \\
                  & 2nd & $0$ & $151$ & $4$ & $8$ & $22$ & $0$ \\
        \Rgrad    & 1st & $153$ & $0$ & $0$ & $8$ & $24$ & $0$ \\
                  & 2nd & $0$ & $151$ & $3$ & $7$ & $23$ & $1$ \\
        \Rgrtr    & 1st & $150$ & $0$ & $1$ & $7$ & $27$ & $0$ \\
                  & 2nd & $0$ & $150$ & $2$ & $8$ & $24$ & $1$ \\
        \bottomrule
    \end{tabular}
    \caption{How the $185$ runs of each configuration of Panel P3 ended, with
    $\varepsilon_g = 10^{-5}$ and $\varepsilon_H = 10^{-6}$. Two further columns of the
    archive table are omitted because they are zero in every row, a user-requested stop and a
    missing record. The first-order arm reaches a first-order point on $128$ to $159$ problems
    and certifies none of them, which is the distinction
    Section~\ref{subsec: so limits} turns on. $\Rstep$ is the only rule for which the
    iteration cap is a common outcome rather than a rare one.}
    \label{tab: so status p3}
\end{table}

\end{document}